\documentclass[letterpaper,10pt]{amsart}
\usepackage{amsmath,amssymb,amsxtra,amsfonts,amsthm,url,comment,microtype,enumerate}

\usepackage{a4wide}
\voffset=-.2in

\usepackage{tikz}
\usetikzlibrary{arrows,backgrounds,decorations,automata}
\usepackage{hyperref}

\theoremstyle{plain}
\newtheorem{theorem}{Theorem}[section]
\newtheorem{corollary}[theorem]{Corollary}
\newtheorem{proposition}[theorem]{Proposition}
\newtheorem{lemma}[theorem]{Lemma}
\theoremstyle{definition}
\newtheorem{definition}[theorem]{Definition}
\newtheorem{example}[theorem]{Example}

\newtheorem{remark}[theorem]{Remark}

\theoremstyle{remark}

\numberwithin{equation}{section}

\newcommand{\NN}{\mathbb{N}}
\newcommand{\ZZ}{\mathbb{Z}}
\newcommand{\CC}{\mathbb{C}}
\newcommand{\QQ}{\mathbb{Q}}
\newcommand{\W}{\mathcal{W}}
\newcommand{\Wa}{\tta\mathcal{W}}
\newcommand{\Wb}{\ttb\mathcal{W}}

\newcommand{\eps}{\varepsilon}

\newcommand{\tta}{{\tt a}}
\newcommand{\ttb}{{\tt b}}
\newcommand{\ttA}{{\tt A}}
\newcommand{\ttB}{{\tt B}}
\newcommand{\ttw}{{\tt w}}
\newcommand{\tts}{{\tt s}}
\newcommand{\ttf}{{\tt f}}
\newcommand{\ttl}{{\tt l}}

\makeatletter
\DeclareRobustCommand\bigop[2][1]{%
  \mathop{\vphantom{\sum}\mathpalette\bigop@{{#1}{#2}}}\slimits@
}
\newcommand{\bigop@}[2]{\bigop@@#1#2}
\newcommand{\bigop@@}[3]{%
  \vcenter{%
    \sbox\z@{$#1\sum$}%
    \hbox{\resizebox{\ifx#1\displaystyle#2\fi\dimexpr\ht\z@+\dp\z@}{!}{$\m@th#3$}}%
  }%
}
\makeatother

\DeclareMathOperator{\tr}{Tr}

\newcommand{\s}[1]{\left(\begin{smallmatrix} #1 \end{smallmatrix}\right)}   

\newcommand{\exend}{\hfill $\Diamond$}

\makeatletter
  \def\vhrulefill#1{\leavevmode\leaders\hrule\@height#1\hfill \kern\z@}
\makeatother

\newcounter{nootje}
\setcounter{nootje}{1}
\newcommand\noot[1]
 {\marginpar{\small\begin{minipage}{30mm}\begin{flushleft}\thenootje : #1\end{flushleft}\end{minipage}}\addtocounter{nootje}{1}}
\setlength{\marginparsep}{0mm}
\setlength{\marginparwidth}{30mm}

\allowdisplaybreaks

\begin{document}

\title{Relative position in binary substitutions}

\subjclass[2020]{Primary 05A05; Secondary 68R15, 11B05}
\keywords{substitutions, Thue--Morse sequence, letter frequency}

\author[M. Coons]{Michael Coons}
\address{Department of Mathematics and Statistics, California State University, Chico, California, USA}
\email{mjcoons@csuchico.edu}
\urladdr{https://mcoons-math.github.io}

\author[C. Ramsey]{Christopher Ramsey}
\address{Department of Mathematics and Statistics, MacEwan University,  Edmonton, Alberta, Canada}
\email{ramseyc5@macewan.ca}
\urladdr{https://sites.google.com/macewan.ca/chrisramsey/}

\author[N. Strungaru]{Nicolae Strungaru}
\address{Department of Mathematics and Statistics, MacEwan University, Edmonton, Alberta, Canada,
and
\newline \hspace*{\parindent}
Institute of Mathematics ``Simon Stoilow'', 
Bucharest, Romania}
\email{strungarun@macewan.ca}
\urladdr{https://sites.google.com/macewan.ca/nicolae-strungaru/home}

\date{\today}

\begin{abstract}
Given an infinite word $\ttw$ on a finite alphabet, an immediate  question arises:~can we understand the frequency of letters in $\ttw$\,? For words that are the fixed points of substitutions, the answer to this question is often `yes'---the details and methods of these answers have been well-documented. In this paper, toward a better-understanding of the fixed points of binary substitutions, we delve deeper by investigating, in fine detail, the position of letters by defining various position functions and proving results about their behavior. Our analysis reveals new information about the Fibonacci substitution and the extended Pisa family of substitutions, as well as a new characterization of the Thue--Morse sequence.
\end{abstract}

\maketitle

\section{Introduction}

Sequences over finite alphabets are pervasive in mathematics. Questions surrounding them have inspired the development of whole areas of mathematics---combinatorics on words, analytic number theory, symbolic dynamics, to name a few. For specific examples one need look no further than the prime number theorem and the Riemann hypothesis, both of which can be stated in terms of the Liouville $\lambda$-function---a binary sequence over the alphabet $\{-1,1\}$ which indicates the parity of the number of prime divisors of an integer. Such questions often concern the frequency of the values (e.g., prime number theorem) or the variance from that frequency (e.g., Riemann hypothesis). In this paper, our primary object of concern are substitution sequences, and, in particular, those acting on binary (two-letter) alphabets. By \emph{binary substitution}, we mean a map $\varrho$ from binary words to binary words such that it is a homomorphism---the natural operation on words being concatenation. Being a homomorphism, a substitution is naturally defined by how it acts on the binary alphabet. A ubiquitous example \cite{AS1999} is the Thue--Morse substitution $\varrho_{\rm TM}$ which is defined by
\[
\varrho_{\rm TM}:
\begin{cases}
    \tta \to \tta\ttb \\
    \ttb \to \ttb\tta\,.
\end{cases}
\] The one-sided infinite word that is the unique fixed point of this substitution which starts with the seed $\tta$,
\[
\ttw_{\rm TM}=\lim_{n\to\infty}\varrho_{\rm TM}^n(\tta)=\tta\ttb\ttb\tta\ttb\tta\tta\ttb\ttb\tta\tta\ttb\tta\ttb\ttb\tta\ttb\tta\tta\ttb\tta\ttb\ttb\tta\tta\ttb\ttb\tta\ttb\tta\tta\ttb\cdots,
\]
is often viewed as an infinite sequence (the so-called \emph{Thue--Morse sequence} or \emph{Prouhet--Thue--Morse sequence}), and is a paradigmatic example in several areas---most notably, number theory, dynamical systems, and theoretical computer science.

A binary substitution is a robust object. Along with it, comes a substitution matrix, which is nonnegative, and from that, one can often obtain information about the frequency of the letters in a fixed point, and hence the questions of the frequency of values can often be easily answered. Also, the questions concerning more nuanced behavior (speed of convergence) of the frequency are known for large classes of substitution sequences, in particular, for those of constant length. An under-appreciated result of Allouche, Mend\`es~France, and Peyri\`ere on Dirichlet series associated to such substitutions can be applied give a wealth of information on sequences arising as the fixed points of constant length substitutions. For non-constant length substitutions, less is known, but a result of Bell \cite{B2008} ensures that the logarithmic frequency of words in general substitutions exists.

Here, toward a better-understanding of the fixed points of binary substitutions, we investigate, in fine detail, the position of letters by defining various position functions and proving results about their behavior. We accomplish this here as follows. In Section \ref{sec:prelim}, we define the (relative) position functions and focus on preliminary results concerning these functions and their interaction with various operators on words. In Section \ref{sec:meanv}, we study the relationship between the means of our position functions and the standard letter frequency. Section \ref{sect:fib} contains an extended study of the Fibonacci substitution and the extended Pisa family of substitutions; in particular, we give a characterization of the Fibonacci word in terms of its relative position function. In Section \ref{sec:linear}, we characterize words that give rise to exact and asymptotically linear relative position functions. We conclude this paper in Section \ref{sec:TM}, where we give obtain a new characterization of the Thue--Morse sequence---it is the only sequence on $\{\pm1\}$ that is equal to its own relative position sequence.

\section{Preliminaries}\label{sec:prelim}

In this paper, we will look (usually) at infinite one-sided binary words over a two letter alphabet, whose elements we call \emph{bits}. That is, considering the alphabet $\Sigma=\{\tta,\ttb\}$ having bits $\tta$ and $\ttb$, we look at words
\[
\ttw=\ell_0 \ell_1 \ell_2 \cdots \ell_n \cdots
\]
where $\ell_n\in\Sigma$. Denote the set of finite words over $\Sigma$ by $\Sigma^*$, and the set of infinite words over $\Sigma$ by $\Sigma^{\omega}$. Further, set $\Sigma^{\infty}=\Sigma^*\cup\Sigma^\omega$. Here, for any finite word $w\in\Sigma^*$, $w\Sigma^\omega$ denotes the set of subwords of $\Sigma^{\omega}$ beginning with the word $w$, that is, having $w$ as a \emph{prefix}. For example, $\Sigma^\omega=\tta\Sigma^\omega\cup\ttb\Sigma^\omega$. Concatenation of words will be written in the usually power notation; for example, $\tta^3=\tta\tta\tta$ and $(\tta\ttb)^2=\tta\ttb\tta\ttb$. We write $(\ell_0\ell_1\cdots\ell_n)^\omega$ to denote the infinite periodic word with periodic part $\ell_0\ell_1\cdots\ell_n$. The length of a word $\ttw$ is denoted by $|\ttw|$, and the number of $\tta$'s and $\ttb$'s occurring in $\ttw$ are denoted by $|\ttw|_\tta$ and $|\ttw|_\ttb$, respectively. So, $|\ttw|=|\ttw|_a+|\ttw|_\ttb$.

Throughout this paper, we assume that both letters $\tta$ and $\ttb$ appear infinitely many times in any infinite binary word ${\tt w}$, or equivalently that $\ttw$ is not eventually $1$-periodic. We separate out this special subset $\W\subset\Sigma^\omega$ with the notation
\[
\W :=\{ \ttw\in\Sigma^\omega : |\ttw|_\tta=\infty\ \mbox{and}\ |\ttw|_\ttb=\infty \} \,.
\]
We also use the analogous notation to the above to indicate words starting with a given prefix for this special subset; for example, $\W=\tta\W\cup\ttb\W$, where, for a finite word $w$, we write $w\W$ to indicate the subset of words of $\W$ having prefix $w$.

While the words we studying are, in general, binary words, most of the examples we focus on are coming from substitutions---for this reason we prefer to use $\{\tta,\ttb\}$ as the alphabet instead of $\{0,1\}$. This choice allows us use the standard notation for level-$n$ super-tiles; that is, if $\sigma$ is a substitution on $\Sigma$, we set $\ttA_n:=\sigma^n(\tta)$ and $\ttB_n:=\sigma^n(\ttb)$.

\smallskip
Let us now introduce the notions of factor words and isomorphic words. Let $\ttw=w_0w_1\cdots$ and $\ttl=\ell_0\ell_1\cdots$ be words on alphabets $\Sigma$ and $\Sigma'$, and assume that the alphabets are \textit{reduced}, meaning
\[
\Sigma=\{ w_n : n \in \ZZ_{\geqslant 0} \} \quad\mbox{and}\quad \Sigma'=\{ \ell_n : n \in \ZZ_{\geqslant 0} \} \,.
\]
We say that $\ttl$ is a \textit{factor} of $\ttw$ if there exists a mapping $\sigma:\Sigma \to \Sigma'$ such that, for all $n \in \ZZ_{\geqslant 0}$ we have
\[
\ell_n=\sigma(w_n) \ .
\]
We will often call the mapping $\sigma$ a \textit{coding}. We say that $\ttw$ and $\ttl$ are \emph{isomorphic} if there exists such a mapping $\sigma$ which is a bijection.
It is easy to see that $\ttw$ and $\ttl$ are isomorphic if and only if each is a factor of the other.

Finally, given a sequence $\{x_n\}_{n \in \NN}$ which only takes finitely many values, we will often abuse notation and think about it as being the word
\[
\ttw = x_1x_2\cdots x_n \cdots \,.
\]
Note here that this will introduce a shift in position, as $x_1$ is in position $0$.

\smallskip

With the above `stringology' firmly noted, we move on to the definition of functions that will play important roles in what follows. Here, and below, $\NN$ denotes the set of positive integers, and $\ZZ_{\geqslant 0}$ denotes the set of non-negative integers, and $\CC$ denotes the set of complex numbers.

\begin{definition} Let $\ttw \in \W$. We define the \emph{position functions} $p_{\tta, \ttw}(n)$ and $p_{\ttb,\ttw}(n)$ as the position of the $n$-th occurrence of $\tta$ and $n$-th occurrence of $\ttb$ in $\ttw$, respectively, and we define the \emph{relative position function} $r_\ttw(n)$ as
\[
r_\ttw(n):=p_{\ttb,\ttw}(n)-p_{\tta,\ttw}(n) \,.
\]
Provided the context is clear, we use $p_\tta, p_\ttb$, and $r$ in place of $p_{\tta, \ttw},p_{\ttb,\ttw}$, and $r_\ttw$, respectively.
\end{definition}

\begin{definition}
For a sequence $s : \NN \to \CC$ , the \emph{difference sequence} $\Delta s:\NN \to \CC$ of $s$ is defined by
\[
\Delta s (n) := s(n+1)-s(n) \,.
\]
\end{definition}

Let $\ttw \in \W$. Note the following immediate consequences of the above definitions. Firstly, we have $r(n) \neq 0$ for any $n$. Secondly, the functions $p_\tta, p_\ttb$ are strictly increasing. In particular, $\Delta p_\tta, \Delta p_\ttb$ are positive sequences. Thirdly, the difference sequence $\Delta p_a(n)$ equals one plus the number of $\ttb$'s between the $n$-th $\tta$ and the next one. This is sometimes called the \textbf{sequence of $\ttb$-runs}. Similarly, the difference sequence $\Delta p_\ttb(n)$ equals one plus the number of $\tta$'s between the $n$-th $\ttb$ and the next one. Finally, since both $\tta$ and $\ttb$ appear infinitely many times in $\ttw$, the sets
\begin{align*}
  A &=\{ p_\tta(n): n \in \NN \} \\
  B &=\{ p_\ttb(n): n \in \NN \} \,,
\end{align*}
partition $\ZZ_{\geqslant 0}$ into two infinite sets. Moreover, any such partition corresponds uniquely to the images of the position functions for a word $\ttw$. In particular, $\ttw$ can be recovered from $A$ (or $B$).

With the above properties in hand, we now discuss which functions can be position functions or relative position functions. The following lemma is an immediate consequence of the definitions---since the proof is straightforward, we omit it.

\begin{lemma} Let $p :\NN \to \ZZ_{\geqslant 0}$. The following hold.
\begin{itemize}
  \item[(a)] There exists some $\ttw \in \Wa$ such that $p_\tta=p$ if, and only if, $p$ is strictly increasing, $p(1)=0$, and $\Delta p >1$ infinitely often.
  \item[(b)] There exists some $\ttw \in \Wa$ such that $p_\ttb=p$ if, and only if, $p$ is strictly increasing, $p(1)>0$, and $\Delta p >1$ infinitely often. \qed
\end{itemize}
\end{lemma}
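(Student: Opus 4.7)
The plan is to prove both (a) and (b) by direct unpacking of the definitions; since a position function and the word $\ttw$ mutually determine each other, each biconditional reduces to a bookkeeping exercise. I will treat (a) in detail; (b) is the symmetric statement with the roles of $\tta$ and $\ttb$ interchanged.

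For the forward direction of (a), I would start from $\ttw \in \Wa$ and set $p := p_\tta$. Strict monotonicity of $p$ is built into the definition of ``$n$-th occurrence'', and $p(1) = 0$ follows immediately from $\ttw \in \Wa$, i.e., $\ttw_0 = \tta$. The third condition relies on the observation that $\Delta p(n) - 1$ is exactly the number of $\ttb$'s strictly between the $n$-th and $(n+1)$-st $\tta$'s, so
\[
|\ttw|_\ttb \;=\; \sum_{n \geqslant 1}\bigl(\Delta p(n) - 1\bigr).
\]
Each summand is a non-negative integer, so the requirement $|\ttw|_\ttb = \infty$ built into $\ttw \in \W$ is equivalent to $\Delta p(n) > 1$ infinitely often.

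For the reverse direction of (a), given $p$ satisfying the three listed conditions, I would set $A := p(\NN)$ and $B := \ZZ_{\geqslant 0} \setminus A$, and define $\ttw$ by $\ttw_k = \tta$ if $k \in A$ and $\ttw_k = \ttb$ if $k \in B$. The same counting identity, read backwards, forces $|B| = \infty$; strict monotonicity gives $|A| = \infty$; and $0 = p(1) \in A$ puts $\ttw$ in $\Wa$. By construction the $n$-th $\tta$ of $\ttw$ sits at position $p(n)$, so $p_\tta = p$. Part (b) is handled by the same construction, the only change being that the condition $\ttw_0 = \tta$ now requires $0 \notin p(\NN)$, i.e., $p(1) > 0$, with the roles of $\tta$ and $\ttb$ swapped throughout the counting.

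There is no real obstacle here; the only substantive content is the single counting identity above, and once it is noted every equivalence in the lemma becomes essentially a tautology.
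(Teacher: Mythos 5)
Your proof is correct; the paper omits its own proof of this lemma as an immediate consequence of the definitions, and your argument is precisely the intended unpacking, with the key counting identity $|\ttw|_\ttb = \sum_{n\geqslant 1}(\Delta p_\tta(n)-1)$ (valid for $\ttw\in\Wa$ since no $\ttb$ precedes the first $\tta$) correctly translating the infinitude of $\ttb$'s into the condition that $\Delta p>1$ infinitely often. The reverse construction via $A=p(\NN)$ and its complement is exactly the standard correspondence between words and partitions of $\ZZ_{\geqslant 0}$ that the paper describes just before the lemma, so nothing further is needed.
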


The question of which $r: \NN \to \ZZ$ can occur as a relative position function is a bit trickier. As noted above, there are some restrictions on relative position functions, which eliminate many possibilities. Here, we show that a relative position function $r$ uniquely identifies $\ttw$.

\begin{lemma}\label{lemma2} Let $\ttw,\ttw' \in \W$ with relative position functions $r_\ttw$ and $r_{\ttw'}$, respectively. Then $\ttw=\ttw'$ if, and only if, $r_\ttw=r_{\ttw'}$.
\end{lemma}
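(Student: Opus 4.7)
The forward direction is immediate from the definition, so I focus on the converse. The plan is to argue by contradiction: assume $r_\ttw = r_{\ttw'}$ but $\ttw \neq \ttw'$, let $k \in \ZZ_{\geqslant 0}$ be the least position at which the two words differ, and by symmetry reduce to the case $\ttw_k = \tta$ and $\ttw'_k = \ttb$. Let $m$ denote the number of $\tta$'s appearing in the common prefix of length $k$, so that this prefix contains $k-m$ occurrences of $\ttb$. Because the first $m$ $\tta$'s and the first $k-m$ $\ttb$'s sit at identical positions in $\ttw$ and $\ttw'$, we have
\[
p_{\tta,\ttw}(j) = p_{\tta,\ttw'}(j)\ \ (1 \leq j \leq m), \qquad p_{\ttb,\ttw}(j) = p_{\ttb,\ttw'}(j)\ \ (1 \leq j \leq k-m),
\]
while the next occurrences split: $p_{\tta,\ttw}(m+1) = k < p_{\tta,\ttw'}(m+1)$ and $p_{\ttb,\ttw'}(k-m+1) = k < p_{\ttb,\ttw}(k-m+1)$.

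The heart of the argument is to exhibit a single index $n$ at which $r_\ttw(n) \neq r_{\ttw'}(n)$, and I would split according to the relationship between $2m$ and $k$. If $2m < k$, I take $n = m+1$, so that $n \leq k-m$ and the $n$-th $\ttb$ lies in the common prefix; then
\[
r_\ttw(n) - r_{\ttw'}(n) = p_{\tta,\ttw'}(n) - p_{\tta,\ttw}(n) = p_{\tta,\ttw'}(m+1) - k > 0.
\]
If $2m > k$, I take $n = k - m + 1$, so that $n \leq m$ and the $n$-th $\tta$ is in the common prefix, and the symmetric calculation yields $r_\ttw(n) - r_{\ttw'}(n) = p_{\ttb,\ttw}(n) - k > 0$. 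Finally, if $2m = k$, the common choice $n = m+1 = k-m+1$ works on signs: $p_{\tta,\ttw}(n) = k$ together with $p_{\ttb,\ttw}(n) > k$ forces $r_\ttw(n) > 0$, while dually $r_{\ttw'}(n) < 0$. In each case the equality $r_\ttw = r_{\ttw'}$ is violated, giving the contradiction.

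The main bookkeeping obstacle is verifying in each of the three regimes that the chosen index $n$ really lies in the range where the relevant position function has already been pinned down by the common prefix; the split on $2m$ versus $k$ is designed precisely to make this work and to cover all possibilities without overlap. Conceptually, the proof makes explicit the intuitive algorithm for reading off $\ttw$ from $r_\ttw$ one letter at a time: once the first $k$ letters are known, the pair $(m,k-m)$ is known, and the value of $r$ at whichever of $m+1$ or $k-m+1$ is bounded by the smaller count determines the $(k+1)$-th letter.
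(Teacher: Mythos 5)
Your proof is correct. The case analysis is airtight: the split on $2m$ versus $k$ guarantees that the chosen index $n$ is small enough that one of the two position functions is still pinned down by the common prefix, and in each regime the computation of $r_\ttw(n)-r_{\ttw'}(n)$ (or of the signs in the balanced case $2m=k$) is right; the edge cases $m=0$, $m=k$, and $k=0$ all land correctly in one of your three regimes. Your route is genuinely different from the paper's: the paper proves sufficiency by a forward induction on $n$, showing $p_{\tta,\ttw}(n)=p_{\tta,\ttw'}(n)$ for all $n$ by arguing that the minimum position not yet occupied by the first $n$ occurrences of each letter must hold the $(n+1)$-th $\tta$ or the $(n+1)$-th $\ttb$ according to the sign of $r(n+1)$. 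That constructive induction has the side benefit of immediately yielding the reconstruction algorithm that the paper records as the next lemma. Your argument instead runs by contradiction from the first position $k$ where the words disagree, and it has its own payoff: it identifies explicitly \emph{which} value of $r$ detects the discrepancy, namely $r(\min(m,k-m)+1)$ where $m=|\ell_0\cdots\ell_{k-1}|_\tta$, which is a sharper, more quantitative statement of injectivity. If you wanted the reconstruction algorithm as well, you would still need to rerun your analysis in the forward direction, so the paper's organization is the more economical one for their purposes, but as a standalone proof of the lemma yours is complete and arguably cleaner.
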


\begin{proof} Necessity is clear, so we need only prove sufficiency. Toward this, suppose that $r_\ttw=r_{\ttw'}$. We will show by induction that $p_{\tta,\ttw}(n)=p_{\tta,\ttw'}(n)$, from which the result follows.

We know that $r_{\ttw}(1) = r_{\ttw'}(1)$. If this value is positive, then the first $\tta$ must appear before the first $\ttb$ and hence
\[
p_{\tta,\ttw}(1)=0=p_{\tta,\ttw'}(1) \,.
\]
On another hand, if $r_{\ttw}(1) = r_{\ttw'}(1) <0$ then
\begin{align*}
 p_{\ttb,\ttw}(1)&=0=p_{\ttb,\ttw'}(1)    \\
 p_{\tta,\ttw}(1)&=0-r_{\ttw}(1)=0-r_{\ttw'}(1)=p_{\tta,\ttw'}(1)  \,.
\end{align*}
This shows the claim for $n=1$.

 Now, suppose that $p_{\tta,\ttw}(k)=p_{\tta,\ttw'}(k)$ for all positive integers $k\leqslant n$. Then, by the definition of $p_{\tta,\ttw}$,  $p_{\tta,\ttw'}$, $r_\ttw$, and $r_{\ttw'}$, the first $n$  $\tta$'s in $\ttw$ appear at the positions
\[
A_n:= \{p_{\tta,\ttw}(1), p_{\tta,\ttw}(2), \ldots, p_{\tta,\ttw}(n) \},
\]
and that the first $n$ $\ttb$'s in $\ttw$ appear at the positions
\[
B_n:= \{  p_{\tta,\ttw}(1)+r_\ttw(1), p_{\tta,\ttw}(2)+r_\ttw(2), \ldots, p_{\tta,\ttw}(n)+r_\ttw(n) \} \,.
\]
Since $r_\ttw=r_{\ttw'}$, and $p_{\tta,\ttw}(k)=p_{\tta,\ttw'}(k)$ for $1 \leqslant k \leqslant n$, we also get that the first $n$  $\tta$'s in $\ttw'$ appear at the positions $A_n$ and that the first $n$ $\ttb$'s in $\ttw'$ appear at the positions $B_n$. Set
\[
M:= \min \big(\NN \backslash (A_n \cup B_n)\big) \,.
\]
We consider the two possible cases separately.

Suppose $r_\ttw(n+1)>0$. Then, in $\ttw$, the $(n+1)$-th $\tta$ appears before the $(n+1)$-th $\ttb$. So, for $k \geqslant n+1$ the $k$-th $\ttb$ appears after the $n+1$-th $\tta$. Also, for $k \geqslant n+2$ the $k$-th $\tta$ appears after the $(n+1)$-th $\tta$. Since the $M$-th position contains either an $\tta$ or a $\ttb$, which is neither among the first $n$ $\tta$'s nor the first $n$ $\ttb$'s, it must contain the $(n+1)$-th $\tta$. Thus $p_{\tta,\ttw}(n+1)=M$. Repeating the argument for $\ttw'$ instead of $\ttw$, we get $p_{\tta,\ttw'}(n+1)=M$, and so $p_{\tta,\ttw}(n+1)=M=p_{\tta,\ttw'}(n+1)$.

If instead, $r(n+1)<0$., a similar argument to the previous paragraph shows that $p_{\ttb,\ttw}(n+1)M=p_{\ttb,\ttw'}(n+1)$. Which, since $r_\ttw(n+1)=r_{\ttw'}(n+1)$, gives $p_{\tta,\ttw}(n+1)=p_{\tta,\ttw'}(n+1)$.
\end{proof}

The proof of Lemma \ref{lemma2} gives the following algorithm for reconstructing $\ttw$ from $r_\ttw$.

\begin{lemma}[Reconstruction algorithm] Let $r : \NN \to \ZZ$ be the relative position function of some word $\ttw \in \W$. Then, we can recover the word $\ttw$ from $r$ by the following simple algorithm.
\begin{itemize}
\item[\underline{Step 1.}] If $r(1) >0$, set $p_\tta(1)=0, p_\ttb(1)=r(1)$, otherwise set $p_\tta(1)=-r(1), p_\ttb(1)=0$.
\item[\underline{Step 2.}] For each $n \geqslant 2$ define
\begin{align*}
A_n&:= \{ p_\tta(1), p_\tta(2), \ldots, p_\tta(n) \}=A_{n-1} \cup \{ p_{\tta}(n) \} \\
B_n&:= \{  p_\ttb(1), p_\ttb(2), \ldots, p_\ttb(n) \} =B_{n-1} \cup \{ p_{\ttb}(n) \}.
\end{align*}
\item[\underline{Step 3.}] Set $k_{n+1}:= \min \big(\NN \backslash (A_n \cup B_n)\big).$ Then,
\begin{itemize}
  \item[(i)] if $r(n+1)>0$, set $p_\tta(n+1)=k_{n+1}$ and\, $p_{\ttb}(n+1)=k_{n+1}+r(n+1)$,
  \item[(ii)] if $r(n+1)<0$, set $p_\tta(n+1)=k_{n+1}-r(n+1)$ and\, $p_{\ttb}(n+1)=k_{n+1}$.
\end{itemize}
\item[\underline{Step 4.}] Increase $n$ by $1$ and go back to Step 2.\qed
\end{itemize}
\end{lemma}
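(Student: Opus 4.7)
The plan is to verify the correctness of the algorithm by induction on $n$, essentially reading off the argument already given in the proof of Lemma~\ref{lemma2}. Since $\ttw$ is completely determined by the set $A=\{p_{\tta,\ttw}(n):n\in\NN\}$ (equivalently by $B$), it suffices to show that the algorithm computes $p_{\tta,\ttw}(n)$ and $p_{\ttb,\ttw}(n)$ correctly for every $n\geqslant 1$; the word $\ttw$ is then obtained by placing $\tta$ at each position in $A$ and $\ttb$ at each position in $B$.

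For the base case $n=1$, I would argue exactly as in the first paragraph of the proof of Lemma~\ref{lemma2}: the sign of $r(1)$ dictates which letter occupies position $0$. If $r(1)>0$ then the first $\tta$ precedes the first $\ttb$, forcing $p_{\tta,\ttw}(1)=0$ and $p_{\ttb,\ttw}(1)=r(1)$; if $r(1)<0$ the roles swap and $p_{\ttb,\ttw}(1)=0$, $p_{\tta,\ttw}(1)=-r(1)$. These are exactly the assignments in Step~1. The case $r(1)=0$ does not occur, since $\ttw\in\W$ forces $r$ to be nowhere zero.

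For the inductive step, suppose $p_\tta(k)$ and $p_\ttb(k)$ as produced by the algorithm coincide with $p_{\tta,\ttw}(k)$ and $p_{\ttb,\ttw}(k)$ for all $k\leqslant n$. Then the first $n$ occurrences of $\tta$ and $\ttb$ in $\ttw$ sit exactly at $A_n$ and $B_n$, so every position strictly less than $k_{n+1}=\min(\NN\setminus(A_n\cup B_n))$ is already filled. Consequently, the letter at position $k_{n+1}$ must be whichever of the $(n+1)$-th $\tta$ or the $(n+1)$-th $\ttb$ appears first in $\ttw$, and the sign of $r(n+1)=p_{\ttb,\ttw}(n+1)-p_{\tta,\ttw}(n+1)$ decides between the two: $r(n+1)>0$ forces $p_{\tta,\ttw}(n+1)=k_{n+1}$ and $p_{\ttb,\ttw}(n+1)=k_{n+1}+r(n+1)$, while $r(n+1)<0$ yields $p_{\ttb,\ttw}(n+1)=k_{n+1}$ and $p_{\tta,\ttw}(n+1)=k_{n+1}-r(n+1)$. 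These are precisely the assignments in Step~3, which closes the induction.

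No serious obstacle is anticipated: the lemma merely repackages the inductive argument of Lemma~\ref{lemma2} as a deterministic procedure, and the only substantive point—that $k_{n+1}$ is indeed the position of the next occurrence to be placed—follows from the fact that $A\cup B=\ZZ_{\geqslant 0}$, which is guaranteed by $\ttw\in\W$. The mild subtlety that $\min$ is taken over $\NN$ rather than $\ZZ_{\geqslant 0}$ is harmless because Step~1 already places one letter at position $0$, so $0\in A_n\cup B_n$ for every $n\geqslant 1$.
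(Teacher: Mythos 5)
Your proposal is correct and follows essentially the same route as the paper, which simply observes that the algorithm is a repackaging of the inductive argument in the proof of Lemma~\ref{lemma2} (base case from the sign of $r(1)$, inductive step identifying $k_{n+1}$ as the next unfilled position). Your added remark that $0\in A_n\cup B_n$ makes the $\min$ over $\NN$ harmless is a sensible clarification but not a departure from the paper's reasoning.
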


\begin{remark} Given a relative position function $r$, in Step 3, the following two things must happen.
\begin{itemize}
\item[($\alpha$)] If $r(n+1)>0$, then we must have $k_{n+1} > p_{\tta}(n)$, $k_{n+1}+r(n+1)> p_{\ttb}(n)$, and $k_{n+1}+r(n+1) \notin A_n \cup B_n$.
\item[($\beta$)]  If $r(n+1)<0$, then we must have $k_{n+1} > p_{\ttb}(n)$, $k_{n+1}-r(n+1)> p_{\tta}(n)$, and $k_{n+1}-r(n+1) \notin A_n \cup B_n$.
\end{itemize}

\noindent Moreover, given any function $r: \NN \to \ZZ \backslash \{0 \}$ with $r(1)>0$, $r$ is the relative position function of some $\ttw$ if, and only if,
in the above algorithm the conditions $(\alpha)$ and $(\beta)$ always hold.\exend
\end{remark}

We now show that any increasing function $r$ with $r(1)>0$ is a relative position function, and we discuss the relation between monotonicity and the $\ttb$-runs. 

\begin{lemma}
For any increasing function $r: \NN \to \NN$, there exists some word $\ttw \in \Wa$ with $r=r_\ttw$.
\end{lemma}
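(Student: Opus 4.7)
My plan is to feed the given $r$ into the reconstruction algorithm from the previous lemma and check that its output is a legitimate word in $\Wa$ with $r_\ttw=r$. Since $r$ takes values in $\NN$, we have $r(n) > 0$ for every $n$, so Step 1 yields $p_\tta(1) = 0$ and $p_\ttb(1) = r(1)$, and only case (i) of Step 3 is ever invoked. By the Remark following the algorithm, the construction gives a valid $\ttw$ exactly when condition $(\alpha)$ holds at every step, so this is what I need to verify.

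The key observation, proved by induction on $n$, is that (setting $k_1 := 0$) one has $p_\tta(n) = k_n$, $p_\ttb(n) = k_n + r(n)$, and hence
\[
A_n = \{k_1, \ldots, k_n\}, \qquad B_n = \{k_1+r(1), \ldots, k_n+r(n)\},
\]
and moreover $k_{n+1} > k_n$. The strict monotonicity of $\{k_n\}$ is because every positive integer smaller than $k_n$ lies in $A_{n-1} \cup B_{n-1} \subset A_n \cup B_n$ by the minimality used to define $k_n$, while $k_n$ itself is in $A_n$; thus $k_{n+1} = \min(\NN \setminus (A_n \cup B_n))$ must strictly exceed $k_n$.

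Granted this, the three clauses of condition $(\alpha)$ follow almost mechanically: $k_{n+1} > k_n = p_\tta(n)$ is immediate; the monotonicity of $r$ together with $k_{n+1} > k_n$ gives $k_{n+1} + r(n+1) > k_n + r(n) = p_\ttb(n)$; and for any $m \leqslant n$, the inequalities $k_{n+1} > k_m$ and $r(n+1) \geqslant r(m)$ combine to give $k_{n+1} + r(n+1) > k_m + r(m) > k_m$, so $k_{n+1} + r(n+1) \notin A_n \cup B_n$. Finally, $k_n \to \infty$ forces every non-negative integer to eventually appear in some $A_n \cup B_n$, and both $\{k_n\}$ and $\{k_n+r(n)\}$ are unbounded, so $\ttw \in \Wa$. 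I do not foresee a serious obstacle; the only minor point worth watching is that the argument above uses only $r(n+1) \geqslant r(n)$, so it goes through whether ``increasing'' is read strictly or weakly.
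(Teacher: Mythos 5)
Your proposal is correct and follows essentially the same route as the paper: the paper's proof likewise builds $\ttw$ inductively by placing the $(n+1)$-th $\tta$ at the least unused position and the $(n+1)$-th $\ttb$ at that position plus $r(n+1)$, then checks monotonicity, disjointness of the position sets, and that their union covers all of $\ZZ_{\geqslant 0}$ (indeed, the paper explicitly notes that its argument is just the reconstruction algorithm). Your verification of condition $(\alpha)$ and of $k_n\to\infty$ matches the paper's checks step for step, so there is nothing to add.
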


\begin{proof} The idea of the proof is the same as the reconstruction algorithm---the key is that $r(n)>0$ for each $n$. We will determine the word $\ttw$ by the position of its letters. To this end, we start by defining $p_\tta(1)=0$ and $p_\ttb(1)=r(1)$, and proceed by induction.

For each $n \geqslant 1$, set
\begin{align*}
A_n&:= \{ p_\tta(1), p_\tta(2), \ldots, p_\tta(n) \}=A_{n-1} \cup \{ p_{\tta}(n) \} \\
B_n&:= \{  p_\ttb(1), p_\ttb(2), \ldots, p_\ttb(n) \} =B_{n-1} \cup \{ p_{\ttb}(n) \} \\
p_{\tta}(n+1)&:= \min \NN \backslash (A_n \cup B_n) \\
p_{\ttb}(n+1)&:=p_{\tta}(n+1)+r(n+1) \,.
\end{align*}
Since $(A_{n-1} \cup B_{n-1}) \subseteq (A_n \cup B_n)$, we immediately get that $p_{\tta}(n+1) > p_{\tta}(n)$. Next,
\[
p_{\ttb}(n+1)=p_{\tta}(n+1)+r(n+1) > p_{\tta}(n)+r(n) = p_{\tta}(n+1) \,.
\]
Moreover, by definition $p_{\tta}(n+1) \notin A_n \cup B_n$, which, with $p_{\ttb}(n+1)>p_\tta(n)$ and $p_{\ttb}(n+1)>p_\ttb(n)$, imply that $p_{\ttb}(n+1) \notin A_n \cup B_n$. This immediately implies that for all $n$ we have $A_n \cap B_n =\varnothing$. Since $A_1 \subsetneq A_2  \subseteq \cdots $ and $B_1 \subsetneq  B_2  \subsetneq \cdots $ are nested, we get that the unions
\[
A := \bigcup_n A_n \qquad\mbox{and}\qquad B := \bigcup_n B_n
\]
are disjoint infinite sets.

Finally, $p_\tta(1)=0$ and $p_\tta(n+1) > p_{\tta}(n)$ immediately imply that $p_\tta(n) \geqslant n-1$. Therefore
\[
\min \NN \backslash (A_n \cup B_n)= p_\tta(n+1)\geqslant n
\]
and hence
\[
\{1,2,3, \ldots, n \} \subseteq (A_{n+1} \cup B_{n+1})  \subseteq (A \cup B) \,.
\]
It follows that $\NN = A\cup B.$ Now, defining
\[
\ttw := \ell_0 \ell_1 \cdots \qquad\mbox{where}\qquad
\ell_k=
\begin{cases}
\tta & \mbox{ if } k \in A \\
\ttb &\mbox{ if } k \in B\, ,
\end{cases}
\]
since $A\cap B=\varnothing,$ we have that $r_\ttw=r$, which is the desired result.
\end{proof}

By applying an analogous argument, we get the following immediate corollary.

\begin{corollary} For any decreasing function $r : \NN \to \ZZ$ with $r(1)<0$, there exists some $\ttw \in \Wb$ with $r=r_\ttw$.\qed
\end{corollary}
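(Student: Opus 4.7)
The plan is to reduce to the preceding lemma by exploiting the symmetry between the two letters. Since $r$ is decreasing with $r(1)<0$, every value $r(n)$ is a negative integer, so $\tilde r := -r$ is an increasing function $\NN \to \NN$. By the previous lemma, there exists some $\ttw' \in \Wa$ with $r_{\ttw'} = \tilde r$.

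Next, I would define $\ttw$ as the image of $\ttw'$ under the letter-swap bijection $\sigma:\Sigma \to \Sigma$ sending $\tta \mapsto \ttb$ and $\ttb \mapsto \tta$. Because $\ttw'$ starts with $\tta$, the word $\ttw$ starts with $\ttb$, hence $\ttw \in \Wb$. Swapping letters interchanges the two position functions, so $p_{\tta,\ttw}(n) = p_{\ttb,\ttw'}(n)$ and $p_{\ttb,\ttw}(n) = p_{\tta,\ttw'}(n)$ for every $n$, and consequently
\[
r_\ttw(n) = p_{\ttb,\ttw}(n) - p_{\tta,\ttw}(n) = p_{\tta,\ttw'}(n) - p_{\ttb,\ttw'}(n) = -r_{\ttw'}(n) = -\tilde r(n) = r(n),
\]
which gives the claim.

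A fully equivalent route, avoiding the symmetry shortcut, would be to rerun the constructive proof of the previous lemma with the roles of $\tta$ and $\ttb$ exchanged: initialize $p_\ttb(1) := 0$ and $p_\tta(1) := -r(1)$, and at each stage set $p_\ttb(n+1) := \min \NN \setminus (A_n \cup B_n)$ and $p_\tta(n+1) := p_\ttb(n+1) - r(n+1)$. The monotonicity, disjointness, and filling arguments transfer verbatim once one observes that $-r$ is a positive increasing function. I expect no real obstacle here; the only point that needs care is the sign when verifying inequalities such as $p_\tta(n+1) > p_\tta(n)$, which now reads $p_\ttb(n+1) - r(n+1) > p_\ttb(n) - r(n)$ and follows from $p_\ttb(n+1) > p_\ttb(n)$ together with $-r(n+1) \geqslant -r(n)$.
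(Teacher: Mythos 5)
Your proposal is correct and matches the paper's intent: the paper dispatches this corollary with the single phrase ``by applying an analogous argument,'' and your second route (re-running the construction with the roles of $\tta$ and $\ttb$ exchanged, using that $-r$ is positive and increasing) is exactly that argument made explicit. Your first route via the letter-swap is equally valid and is in fact formalized by the paper itself immediately afterwards (the reflection operator and Proposition 2.12, which states $r_{\overline{\ttw}} = -r_{\ttw}$), so both of your derivations are sound.
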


\medskip

\begin{definition} The \emph{reflection operator}, $\bar{\cdot}$, on  $\Sigma^\infty$, is the morphism defined by $\overline{\tta}=\ttb$ and $\overline{\ttb}=\tta$.
\end{definition}

The following result is clear, so we omit the proof.

\begin{proposition}\label{prop12} The refection operator satisfies the following properties.
\begin{itemize}
    \item[(a)] $\overline{\Wa} = \Wb$ and $\overline{\Wb}=\Wa$.
    \item[(b)] Let $\ttw,\ttw'\in\W$. Then $\overline{\ttw}=\ttw'$ if, and only if, $r_{\ttw}=-r_{\ttw'}$\,.\qed
\end{itemize}
\end{proposition}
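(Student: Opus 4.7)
The proposition is genuinely routine once one unpacks the definitions, and the only nontrivial ingredient is Lemma \ref{lemma2} for the converse in part (b). Here is how I would organize the proof.

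For part (a), I would start from the observation that the reflection morphism simply swaps the two letters: bit by bit, $\tta$ becomes $\ttb$ and $\ttb$ becomes $\tta$. Hence, for any $\ttw \in \Sigma^\omega$, one has $|\overline{\ttw}|_\tta = |\ttw|_\ttb$ and $|\overline{\ttw}|_\ttb = |\ttw|_\tta$, so both are infinite precisely when $\ttw \in \W$, and the initial letter of $\overline{\ttw}$ is the reflection of the initial letter of $\ttw$. Combining these two remarks gives $\overline{\ttw} \in \Wb$ whenever $\ttw \in \Wa$, and vice versa. Since reflection is an involution, both containments become equalities.

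For the forward direction of part (b), I would simply use the definitions of the position functions. If $\overline{\ttw} = \ttw'$, then the $n$-th $\tta$ in $\ttw$ sits at the same position as the $n$-th $\ttb$ in $\ttw'$, and symmetrically for $\ttb$'s. Hence $p_{\tta,\ttw}(n) = p_{\ttb,\ttw'}(n)$ and $p_{\ttb,\ttw}(n) = p_{\tta,\ttw'}(n)$, giving
\[
r_\ttw(n) = p_{\ttb,\ttw}(n) - p_{\tta,\ttw}(n) = p_{\tta,\ttw'}(n) - p_{\ttb,\ttw'}(n) = -r_{\ttw'}(n)
\]
for every $n \in \NN$.

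For the converse, suppose $r_\ttw = -r_{\ttw'}$. By part (a) applied to $\ttw'$, the word $\overline{\ttw'}$ lies in $\W$, and by the forward implication we already established we have $r_{\overline{\ttw'}} = -r_{\ttw'}$. Therefore $r_\ttw = r_{\overline{\ttw'}}$, and Lemma \ref{lemma2} forces $\ttw = \overline{\ttw'}$, which is equivalent to $\overline{\ttw} = \ttw'$ since reflection is involutive. I do not anticipate any real obstacle here; the only minor subtlety is remembering to invoke Lemma \ref{lemma2} in the converse direction rather than trying to argue from scratch.
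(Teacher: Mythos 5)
Your proof is correct; the paper simply declares this proposition ``clear'' and omits the argument entirely, and what you have written is precisely the routine verification the authors had in mind. In particular, reducing the converse of (b) to the injectivity statement of Lemma~\ref{lemma2} via the already-established forward implication is the right (and essentially only sensible) way to close that direction.
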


\noindent This proposition allows us to restrict our attention to $r_\ttw$ for $\ttw \in \Wa$ as needed. Note that the condition $\ttw \in \Wa$ is equivalent to $r(1)>0$, which is equivalent to $p_\tta(1)=0$. Also, $r(1)=k>1$ if, and only if, $\ttw \in\tta^{k-1}\ttb\W$. And, if $r(1)=k>1$, then $r(2),\ldots, r(k-1)\geqslant k$,

Note that the reflection operator induces an involution, $\tilde{\cdot}$, on the class of binary substitutions.

\begin{definition} Let $\sigma :\Sigma\to \Sigma^*$ be a binary substitution. Define $\tilde{\sigma}:\Sigma\to \Sigma^*$ by
\[
\tilde{\sigma} (\alpha)= \overline{ \sigma(\overline{\alpha})} \qquad \forall \alpha \in \Sigma = \{ \tta, \ttb \} \,.
\]
\end{definition}
A fast computation shows that $\tilde{\sigma} (\tts) =  \overline{ \sigma(\overline{\tts})}$ for all $\tts \in \Sigma^*$, and hence $\tilde{\sigma} (\ttw) =  \overline{ \sigma(\overline{\ttw})}$ for all $\ttw \in \W$.

We have the following characterization of the $\tta$ and $\ttb$ runs.

\begin{lemma} \label{lem:1} Let $\ttw \in \Wa$ and $k \geqslant 1$.
\begin{itemize}
\item[(a)] The following are equivalent.
\begin{itemize}
\item[(i)]$\ttb^k$ is a subword of $\ttw$.
\item[(ii)] $\sup \{ \Delta p_\tta(n) \} > k$.
\item[(iii)] $\Delta p_\ttb$ takes the value one at least $k-1$ times in a row.
\end{itemize}
\item[(b)]  The following are equivalent.
\begin{itemize}
\item[(i)]$\tta^k$ is a subword of $\ttw$.
\item[(ii)] $\Delta p_\tta$ takes the value one at least $k-1$ times in a row.
\end{itemize}
\end{itemize}
\end{lemma}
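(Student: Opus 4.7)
The plan is to reduce each equivalence to the elementary observation recalled just before this lemma: $\Delta p_\tta(n)-1$ equals the number of $\ttb$'s lying strictly between the $n$-th and $(n+1)$-th $\tta$ in $\ttw$, and symmetrically $\Delta p_\ttb(n)-1$ equals the number of $\tta$'s lying strictly between the $n$-th and $(n+1)$-th $\ttb$. Every step in the proof is a direct combinatorial translation using these two identities.

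For part (a), I would first show (i)$\Leftrightarrow$(ii). If $\ttb^k$ occurs as a subword of $\ttw$, then because $\ttw\in\Wa$ begins with $\tta$ and contains infinitely many $\tta$'s, any occurrence of $\ttb^k$ is strictly preceded and followed by an $\tta$. Letting the preceding $\tta$ be the $n$-th gives $\Delta p_\tta(n)\geqslant k+1$, hence $\sup\Delta p_\tta(n)>k$. Conversely, if $\Delta p_\tta(n)>k$ for some $n$, then by the identity above the $\Delta p_\tta(n)-1\geqslant k$ positions strictly between $p_\tta(n)$ and $p_\tta(n+1)$ all carry the letter $\ttb$, producing a copy of $\ttb^k$. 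Next, for (i)$\Leftrightarrow$(iii), I observe that a subword $\ttb^k$ consists of $k$ consecutive $\ttb$'s in $\ttw$, which must be the $m$-th through $(m+k-1)$-th $\ttb$'s for some $m$; their positions being $k$ consecutive integers is equivalent to $\Delta p_\ttb(m)=\Delta p_\ttb(m+1)=\cdots=\Delta p_\ttb(m+k-2)=1$, i.e.\ to $\Delta p_\ttb$ taking the value one at least $k-1$ times in a row.

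Part (b) is proved by the same argument as (a)(i)$\Leftrightarrow$(a)(iii), with the roles of $\tta$ and $\ttb$ interchanged; no appeal to $\ttw\in\Wa$ is needed here, since the statement already concerns only the position function $p_\tta$.

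The only real subtlety---and the reason (b) carries no analogue of (a)(ii)---is a boundary effect at the start of $\ttw$. An initial run $\tta^m$ at the beginning of $\ttw$ does contribute $m-1$ consecutive $1$'s to $\Delta p_\tta$, but it is invisible to $\Delta p_\ttb$, since $p_\ttb$ only begins recording from the first $\ttb$. Consequently, the hypothesis $\ttw\in\Wa$ must be used precisely in the step of (a)(i)$\Rightarrow$(ii) where we need every occurrence of $\ttb^k$ to be flanked by $\tta$'s on both sides; once this is in place, there are no further obstacles.
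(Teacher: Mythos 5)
Your proposal is correct and follows essentially the same route as the paper: the gap argument for (a)(i)$\Leftrightarrow$(ii) using the last $\tta$ before and first $\tta$ after an occurrence of $\ttb^k$, the consecutive-positions translation for (i)$\Leftrightarrow$(iii) (which the paper simply declares clear), and the same boundary caveat explaining why (b) has no analogue of (a)(ii). Your identification of where $\ttw\in\Wa$ is actually needed matches the paper's remark about initial runs of $\tta$'s.
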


\begin{proof}
\textbf{(a)}
(i)$\Rightarrow$(ii). Since $\ttw$ starts with $\tta$, the word $\ttb^k$ appears somewhere after the first $\tta$. Let $j$ be so that the
$j$-th $\tta$ appears before $\ttb^k$ and $(j+1)$-th $\tta$ appears after $\ttb^k$. Then, there are at least $k$ bits between the $k$-th and $k+1$-th $\tta$, and hence
\[
p_{\tta}(j+1)-p_{\tta}(j) \geqslant k+1 \,.
\]

(ii)$\Rightarrow$(i). By (ii), there exists some $j$ such that
\[
p_{\tta}(j+1)-p_{\tta}(j) \geqslant k+1 \,.
\]
Then, there are $k$ bits between the $j$-th $\tta$ and the $j+1$-th $\tta$. Since these are consecutive $\tta$, all these bits have to be $\ttb$.

(i)$\Leftrightarrow$(iii) is clear.

\textbf{(b)} Is proven exactly as (a). The only difference is that, since $\tta^k$ may occur at the beginning of $\ttw$, and only at the beginning, we cannot conclude anything about $\Delta p_\ttb$.
\end{proof}

Note that Lemma~\ref{lem:1} implies that $\sup \{ \Delta p_\tta(n) \} - 1$ is the longest run of $\ttb$'s in $\ttw$ and that
$\sup \{ \Delta p_\ttb(n) \} - 1$ is the longest run of $\tta$'s appearing in $\ttw$. Combining this with the fact that $\Delta p_\tta \geqslant 1$, we have the following result.

\begin{lemma} \label{lemma1} Let $\ttw \in \Wa$.
\begin{itemize}
\item[(a)] If the function $r$ is strictly increasing, then ${\tt bb}$ is not a subword of ${\tt w}$.
\item[(b)] If\, ${\tt bb}$ is not a subword of ${\tt w}$, then $r$ is increasing.
\item[(c)] If\, $\ttb^k$ is a subword of $w$, there exists some $n$ so that $\Delta r(n+j) \leqslant 0$ for all $j\in\{1,\ldots,k-1\}$.
\item[(d)] If $\tta^k$ is a subword of $w$, there exists some $n$ so that $\Delta r(n+j) \geqslant 0$ for all $j\in\{1,\ldots,k-1\}$.
\end{itemize}
\end{lemma}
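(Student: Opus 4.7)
The plan is to derive all four parts from the single identity
\[
\Delta r(n) = \Delta p_\ttb(n) - \Delta p_\tta(n),
\]
which follows immediately from $r(n) = p_\ttb(n) - p_\tta(n)$, combined with the facts $\Delta p_\tta, \Delta p_\ttb \geqslant 1$ and the characterizations supplied by Lemma~\ref{lem:1}. Statements about the sign of $\Delta r$ thereby reduce to comparisons between two positive integer sequences, and runs of $\tta$'s or $\ttb$'s reduce to how often the difference sequences take the value $1$.

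For part (a), strict monotonicity of $r$ gives $\Delta r(n) \geqslant 1$ for every $n$; combined with $\Delta p_\tta \geqslant 1$ this forces $\Delta p_\ttb(n) \geqslant 2$ everywhere, and Lemma~\ref{lem:1}(a) (applied with $k=2$) then rules out $\ttb\ttb$ as a subword. For part (b), conversely, if $\ttb\ttb$ is not a subword then Lemma~\ref{lem:1}(a) with $k=2$ gives simultaneously $\Delta p_\tta(n) \leqslant 2$ and $\Delta p_\ttb(n) \geqslant 2$. I then split into cases on $\Delta p_\tta(n)$: if $\Delta p_\tta(n) = 1$, then $\Delta r(n) = \Delta p_\ttb(n) - 1 \geqslant 0$, while if $\Delta p_\tta(n) = 2$, then $\Delta r(n) = \Delta p_\ttb(n) - 2 \geqslant 0$ by the second bound. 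Hence $r$ is non-decreasing.

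For (c) and (d), I invoke Lemma~\ref{lem:1} to locate $k-1$ consecutive indices where one of the two difference sequences equals $1$. A subword $\ttb^k$ yields, via (a)(iii), an index $m$ with $\Delta p_\ttb(m+i) = 1$ for $i = 0, \ldots, k-2$; setting $n = m - 1$ and using $\Delta p_\tta \geqslant 1$, we get $\Delta r(n+j) = 1 - \Delta p_\tta(n+j) \leqslant 0$ for $j = 1, \ldots, k-1$. Symmetrically, a subword $\tta^k$ gives an analogous index via (b)(ii), from which $\Delta r(n+j) = \Delta p_\ttb(n+j) - 1 \geqslant 0$ over the same range. The only real subtlety here is bookkeeping with indices in translating ``$k-1$ in a row'' from Lemma~\ref{lem:1} into the precise range $j \in \{1, \ldots, k-1\}$ required in the statement, which is resolved by the one-step shift of the base index just described.
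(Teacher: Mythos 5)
Your proposal is correct and follows essentially the same route as the paper: everything rests on the identity $\Delta r = \Delta p_\ttb - \Delta p_\tta$ together with the characterizations in Lemma~\ref{lem:1}, with (a) done in contrapositive form rather than by contradiction and (c)--(d) spelled out where the paper says ``mutatis mutandis.'' The index bookkeeping in (c) and (d) checks out.
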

\begin{proof}
\textbf{(a)} Toward a contradiction, assume that $\ttb \ttb$ is a subword of $\ttw$. Then, by Lemma~\ref{lem:1}(a), there exists some $n$ such that $\Delta p_\ttb(n)=1$. Since $\Delta p_{\tta}(n) \geqslant 1$, we get that $\Delta r(n) \leqslant 0$. But, this contradicts the fact that $r$ is strictly increasing.

\textbf{(b)} Since $\ttb \ttb$ is not a subword of $\ttw$, by Lemma~\ref{lem:1}(a), we have $\Delta p_\tta(n) \leqslant 2$ for all $n$ and that $\Delta p_\ttb$ never takes the value $1$, meaning that $\Delta p_\ttb(n) \geqslant 2$ for all $n$. It follows immediately that $\Delta r \geqslant 0$ so that $r$ is increasing.

The proofs of \textbf{(c)} and \textbf{(d)} follow \emph{mutatis mutandis} of the proof of part \textbf{(a)} above.
\end{proof}

Note that, in Lemma~\ref{lemma1}(a), we cannot assume that $r$ is increasing, and, in Lemma~\ref{lemma1}(b), we cannot show that $r$ is strictly increasing. Indeed, here are two witnessing examples.
\begin{itemize}
  \item The word
\[
\ttw =\tta\ttb\tta\tta\ttb\ttb\tta\tta\tta\ttb \ttb\ttb\tta\tta\tta\tta\ttb\ttb\ttb\ttb\cdots =\tta \ttb \tta^2 \ttb^2 \tta^3 \ttb^3 \tta^4 \ttb^4 \cdots
\]
has the following sequence as relative position function
\[
1,2,2,3,3,3,4,4,4,4,\ldots \,.
\]
Here, $r$ is increasing, but $\ttw$ contains $\ttb \ttb$ as a subword.

Later, we will cover a more interesting example---the word obtained by adding the prefix $\tta \tta \ttb \ttb$ to the Fibonacci substitution $\sigma_2$ contains $\ttb \ttb$ and its relative position function $r$ satisfies $r(1)=r(2)=r(3)=2$ and is strictly increasing starting at $n=3$.
  \item The periodic word $\ttw = (\tta \ttb)^\omega$ does not contain $\ttb \ttb$ and has $r(n)=1$ for all $n$.
\end{itemize}

Before moving on to periodic sequences, we note that Lemma~\ref{lem:1} gives the following results on the possible boundedness of $\Delta r$.

\begin{lemma} Let $\ttw \in \Wa$. If $\ttb^k$ does not appear in $\ttw$, then $\Delta r$ is bounded if, and only if, $\Delta p_\ttb(n)$ is bounded.
\end{lemma}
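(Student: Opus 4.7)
My plan is to reduce everything to the identity
\[
\Delta r(n) \;=\; \Delta p_\ttb(n) - \Delta p_\tta(n),
\]
which is immediate from $r(n) = p_\ttb(n) - p_\tta(n)$. Once this is in hand, the hypothesis that $\ttb^k$ does not occur in $\ttw$ enters only to pin down that $\Delta p_\tta$ is bounded.

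The key step is to invoke Lemma~\ref{lem:1}(a): its contrapositive says that if $\ttb^k$ is \emph{not} a subword of $\ttw$, then $\sup\{\Delta p_\tta(n)\} \leqslant k$, so $\Delta p_\tta$ takes values in the finite set $\{1,\ldots,k\}$ and in particular is bounded. (Recall $\Delta p_\tta \geqslant 1$ because $p_\tta$ is strictly increasing.)

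With $\Delta p_\tta$ bounded, both directions of the equivalence are routine. For the forward direction, if $\Delta r$ is bounded, then from $\Delta p_\ttb(n) = \Delta r(n) + \Delta p_\tta(n)$ we see $\Delta p_\ttb$ is a sum of two bounded sequences, hence bounded. Conversely, if $\Delta p_\ttb$ is bounded, then $\Delta r(n) = \Delta p_\ttb(n) - \Delta p_\tta(n)$ is a difference of two bounded sequences, hence bounded.

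There is no real obstacle here; the proof is essentially an algebraic manipulation of differences together with one appeal to Lemma~\ref{lem:1}(a). The only point requiring any care is making sure that $\Delta p_\tta$ is bounded both above (by the subword-avoidance hypothesis) and below (by strict monotonicity of $p_\tta$), so that it can freely be moved across the equation without creating or destroying boundedness on the other side.
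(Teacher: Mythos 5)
Your proof is correct and follows the same route as the paper: both rest on the identity $\Delta r = \Delta p_\ttb - \Delta p_\tta$ together with the contrapositive of Lemma~\ref{lem:1}(a), which bounds $\Delta p_\tta$ by $k$ when $\ttb^k$ is avoided. You simply spell out the two directions that the paper leaves implicit.
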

\begin{proof} This following quickly noting that $\Delta r =\Delta p_\ttb -\Delta p_\tta$, and by Lemma~\ref{lem:1}, $\Delta p_\tta <k+1$.
\end{proof}

For a periodic word $\ttw$, all of the relative position functions are also periodic, as we shall prove here. But this relationship cannot be inverted---later, we will see some examples where $\Delta r_\ttw$ is periodic for an aperiodic word $\ttw$.

\begin{lemma}\label{lem:per-seq} Let $\ttw \in \Wa$. Then, the following are equivalent.
\begin{itemize}
  \item[(i)] $\ttw$ is periodic.
  \item[(ii)] $\Delta p_{\tta}$ is periodic.
  \item[(iii)] $\Delta p_{\tta}$ and $\Delta p_{\ttb}$ are periodic.
\end{itemize}
Moreover, if $\ttw =  w^\omega$ and $w$ contains $k\ \tta$'s and $j\ \ttb'$s, then $\Delta p_{\tta}$ is $k$-periodic and $\Delta p_{\ttb}$ is $j$-periodic.
\end{lemma}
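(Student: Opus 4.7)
The plan is to establish the chain $(i)\Rightarrow(iii)\Rightarrow(ii)\Rightarrow(i)$, with the middle implication immediate. The direction $(i)\Rightarrow(iii)$ is a routine book-keeping exercise that simultaneously yields the ``moreover'' clause, so I would dispatch it first; all the content lies in $(ii)\Rightarrow(i)$.

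For $(i)\Rightarrow(iii)$: assume $\ttw = w^\omega$ with $|w|=\ell$, $|w|_\tta=k$, and $|w|_\ttb=j$, so $\ell=k+j$. Every block of $\ell$ consecutive positions contains exactly $k$ copies of $\tta$ and $j$ copies of $\ttb$, which gives $p_\tta(n+k)=p_\tta(n)+\ell$ and $p_\ttb(n+j)=p_\ttb(n)+\ell$ for all $n \geqslant 1$. Taking first differences produces $\Delta p_\tta(n+k)=\Delta p_\tta(n)$ and $\Delta p_\ttb(n+j)=\Delta p_\ttb(n)$, yielding both $(iii)$ and the stated periods.

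For $(ii)\Rightarrow(i)$, suppose $\Delta p_\tta$ is $k$-periodic and set $L := \sum_{i=1}^{k} \Delta p_\tta(i)$. Telescoping across one period gives $p_\tta(n+k)-p_\tta(n)=L$ for all $n\geqslant 1$. Since $\ttw \in \Wa$, we have $p_\tta(1)=0$, and hence $p_\tta(k+1)=L$. Let $A=\{p_\tta(n):n\geqslant 1\}$ be the set of positions of $\tta$ and $B=\ZZ_{\geqslant 0}\setminus A$ its complement. I would prove the bidirectional shift invariance
\[
n \in A \;\Longleftrightarrow\; n+L \in A \qquad \text{for all } n \in \ZZ_{\geqslant 0}.
\]
The forward direction is direct: if $n=p_\tta(m)$, then $n+L=p_\tta(m+k)\in A$. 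For the reverse, if $n+L=p_\tta(r)\in A$, then $p_\tta(r)\geqslant L=p_\tta(k+1)$ forces $r\geqslant k+1$ by strict monotonicity, and so $n=p_\tta(r)-L=p_\tta(r-k)\in A$. Taking complements shows the same equivalence for $B$, which means the letter of $\ttw$ at position $n$ agrees with the one at position $n+L$. Thus $\ttw$ is $L$-periodic, completing the cycle.

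The only genuinely delicate point is the backward direction of the set-invariance: deducing that $A$ is shift-invariant under $-L$, not merely $+L$. This is precisely where the hypothesis $\ttw\in\Wa$ enters, through the identity $p_\tta(k+1)=L$. Without it, $L$ could exceed $p_\tta(k+1)$, leaving positions in $[p_\tta(k+1),L)$ unreachable by the pull-back and breaking the argument.
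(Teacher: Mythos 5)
Your proof is correct and follows essentially the same route as the paper's: the same implication chain $(i)\Rightarrow(iii)\Rightarrow(ii)\Rightarrow(i)$, the same counting argument giving $p_\tta(n+k)=p_\tta(n)+\ell$ for the forward direction, and for $(ii)\Rightarrow(i)$ the same key quantity $L=p_\tta(k+1)-p_\tta(1)$ exhibited as a period of the set of $\tta$-positions (the paper phrases this as a finite union of arithmetic progressions with common difference $L$, you phrase it as two-sided shift invariance of that set, which is the same fact). Your explicit verification of the backward shift invariance is a nice touch that the paper leaves implicit.
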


\begin{proof}
(i)$\Rightarrow$(iii).
Let $\ttw = (w_0w_1\cdots w_{k+j-1})^\omega$, and let $w$ contain $k\ \tta$'s and $j\ \ttb'$s. Let $0 = l_0 < l_1 <l_2<\cdots < l_{k-1} \leqslant k+j-1$ be all the positions of $\tta$ in the word $w_0w_1\cdots w_{k+j-1}$. Then, the values of $p_a(1), p_a(2), p_a(3),\ldots$ are precisely
\[
l_0, l_1, l_2, \ldots, l_{j-1}, l_0+k+j, l_1+k+j, \ldots, l_{j-1}+k+j, l_0+2(k+j), l_1+2(k+j), \ldots, l_{j-1}+2(k+j), \ldots \,.
\]
Explicitly, if $n=qk+r$ where $1 \leqslant r \leqslant k$ when divided by $k$ then
\[
p_a(n)=q(k+j)+l_{r-1} \,.
\]
In particular, $\Delta p_a$ is the $k$-periodic sequence obtained by repeating
\[
l_1 -l_0, l_2-l_1, \ldots, l_{k-1}-l_{k-2}, l_0-l_{k-1}+k+j \,.
\]
The proof for $\Delta p_b$ is identical.

The claim that (iii)$\Rightarrow$(ii) is clear.

(ii)$\Rightarrow$(i).
Assume that $\Delta p_{\tta}$ is periodic with period $k$. Note that $0 = p_\tta(1) <  p_\tta(2) < \cdots < p_\tta(k)$. Set $m=p_\tta(k+1)-p_\tta(1)$. Then, for all $n=kq+t$ with $1 \leqslant t \leqslant k$ we have
\[
p_{\tta}(n)=mq+p_{\tta}(t) \,.
\]
This shows that the set of positions of $\tta$ in $\ttw$ is the finite union of the infinite arithmetic progressions
\[
p_\tta(1)+m \NN, p_\tta(2)+m \NN, \ldots, p_\tta(k)+m \NN \,,
\]
which is certainly periodic. The claim follows immediately.
\end{proof}

\begin{corollary} If $\ttw$ is periodic, so is $\Delta r(n)$. Moreover, if $\ttw=(w)^\omega$ and $w$ contains $k$ $\tt a$'s and $j$ $\ttb$'s, then ${\rm lcm}(k,j)$ is a period of $\Delta r$.\qed
\end{corollary}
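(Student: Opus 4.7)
The plan is to deduce everything from Lemma~\ref{lem:per-seq} via the simple identity
\[
\Delta r(n) = r(n+1)-r(n) = \bigl(p_{\ttb}(n+1)-p_{\tta}(n+1)\bigr) - \bigl(p_{\ttb}(n)-p_{\tta}(n)\bigr) = \Delta p_{\ttb}(n) - \Delta p_{\tta}(n).
\]
So I would first record this identity (one line, immediate from the definition of $r$ and of $\Delta$).

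Next, assume $\ttw=(w)^\omega$ with $|w|_\tta = k$ and $|w|_\ttb = j$. By the ``moreover'' part of Lemma~\ref{lem:per-seq}, $\Delta p_\tta$ is $k$-periodic and $\Delta p_\ttb$ is $j$-periodic. Any multiple of $k$ is also a period of $\Delta p_\tta$, and any multiple of $j$ is also a period of $\Delta p_\ttb$; in particular, $\lcm(k,j)$ is simultaneously a period of both sequences. Hence the difference $\Delta p_\ttb - \Delta p_\tta = \Delta r$ is $\lcm(k,j)$-periodic, which yields the quantitative claim. The first (qualitative) statement then follows a fortiori: periodicity of $\ttw$ forces $\Delta p_\tta$ and $\Delta p_\ttb$ to be periodic by the lemma, so $\Delta r$ is periodic.

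There is no real obstacle here; the only thing to be careful about is that $\lcm(k,j)$ need not be the \emph{minimal} period of $\Delta r$ (cancellations between $\Delta p_\tta$ and $\Delta p_\ttb$ may shorten it), which is why the statement only claims that $\lcm(k,j)$ \emph{is a} period, not \emph{the} period. Everything else is a direct appeal to the previous lemma together with the linearity of the difference operator.
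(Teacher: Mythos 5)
Your proof is correct and is exactly the argument the paper intends: the corollary is stated with no written proof precisely because it follows from the identity $\Delta r = \Delta p_\ttb - \Delta p_\tta$ together with the ``moreover'' clause of Lemma~\ref{lem:per-seq}, which is what you use. Your remark that $\lcm(k,j)$ need not be the minimal period is also consistent with the paper, which illustrates this with the example $\ttw=(\tta\tta\ttb\ttb)^\omega$.
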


To illustrate the above properties of periodic words $\ttw$, we summarize some specific examples with small period in the following table.

\begin{table}[htp]
\caption{Relative position sequences for some periodic words having small period.}
\begin{center}
\begin{tabular}{c|c|c|c}
$\ttw$ & $\Delta p_a$ & $\Delta p_b$ & $\Delta r$ \\ \hline
$(\tta\ttb)^\omega$ & $2,2,2,2,2,2,\ldots$ & $2,2,2,2,2,2,\ldots$ & $0,0,0,0,0,0,\ldots$\\
$(\tta\tta\ttb)^\omega$ & $1,2,1,2,1,2,\ldots$ & $3,3,3,3,3,3,\ldots$ & $2,1,2,1,2,1,\ldots$ \\
$(\tta\ttb\ttb\tta)^\omega$ & $3,1,3,1,3,1,\ldots$ & $1,3,1,3,1,3,\ldots$ & $-2,2,-2,2,-2,2,\ldots$ \\
$(\tta\tta\ttb\ttb)^\omega$ & $1,3,1,3,1,3,\ldots$ & $1,3,1,3,1,3,\ldots$ & $0,0,0,0,0,0,\ldots$
\end{tabular}
\end{center}
\label{default}
\end{table}%

\noindent We note that the example $\ttw=(\tta\tta\ttb\ttb)^\omega$ shows that the period of $\Delta r$ can be strictly smaller than the periods of $\Delta p_a$ and $\Delta p_b$.

In the remainder of this section, we discuss three more operators on the set of words $\W$---the deletion operator, the prefix operator, and the composition operator.

\begin{definition} Let $D_\tta: \W \to \W$ and $D_\ttb : \W \to \W$ be the operators which delete the first $\tta$ and first $\ttb$, respectively, in a word. The \emph{deletion operator} $D :\W \to \W$ is defined by
\[
D:= D_\tta \circ D_\ttb \,.
\]
\end{definition}

It is quite clear that $D_\ttb$ and $D_\tta$ commute with each other, so also, $D=D_\ttb \circ D_\tta$. Also, the reflection operator, $\bar{\cdot}$, satisfies the equalities $D_\tta\circ\bar{\cdot}=\bar{\cdot}\circ D_\ttb$ and $D_\ttb\circ\bar{\cdot}=\bar{\cdot}\circ D_\tta$, so that $D$ and $\bar{\cdot}$ commute, that is, $D\circ\bar{\cdot}=\bar{\cdot}\circ D$.

Let us now prove the following lemma.

\begin{lemma}\label{lem:Deletepositions} Let $\ttw \in \W$, $k:= \min \{ j : p_\ttb(j) > p_\tta(1) \},$ and $\ttw'=D_\tta(\ttw)$.
Then, for all $n$,
\[
p_{\tta,\ttw'}(n)=p_{\tta,\ttw}(n+1)-1 \,,
\qquad \mbox{and} \qquad p_{\ttb,\ttw'}(n)=
\begin{cases}
\, p_{\ttb,\ttw}(n)& \mbox{ if } n < k \\
\, p_{\ttb,\ttw}(n)-1& \mbox{ if } n \geqslant k \\
\end{cases}
\]
\end{lemma}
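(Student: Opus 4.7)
My plan is to reason directly from the combinatorial effect of removing the first $\tta$. Let $m := p_{\tta,\ttw}(1)$ be the position of the first $\tta$ in $\ttw$. Because every symbol strictly preceding it must be a $\ttb$, the word $\ttw' = D_\tta(\ttw)$ is obtained by erasing the single character at position $m$: positions $0, 1, \ldots, m-1$ are untouched, while every position greater than $m$ slides down by exactly one. I would first record this ``shift rule'' as the key technical observation; both formulas are then one-line corollaries.

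For the $\tta$-positions, the $n$-th $\tta$ in $\ttw'$ is physically the same symbol as the $(n{+}1)$-th $\tta$ of $\ttw$ (since exactly one $\tta$ was erased, namely the first), so it sits at position $p_{\tta,\ttw}(n+1) > m$ in $\ttw$. The shift rule immediately yields
\[
p_{\tta,\ttw'}(n) = p_{\tta,\ttw}(n+1) - 1 .
\]

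For the $\ttb$-positions, no $\ttb$ is erased, so the $n$-th $\ttb$ of $\ttw'$ is the same physical symbol as the $n$-th $\ttb$ of $\ttw$, originally at position $p_{\ttb,\ttw}(n)$. By the minimality in the definition of $k$, we have $p_{\ttb,\ttw}(n) < m$ whenever $n < k$ and $p_{\ttb,\ttw}(n) > m$ whenever $n \geqslant k$ (equality with $m$ is impossible because position $m$ is occupied by an $\tta$). Applying the shift rule in each case delivers the two branches of the claimed formula.

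The only real obstacle is careful bookkeeping of two different indices attached to each surviving symbol: its original position in $\ttw$ versus its new position in $\ttw'$. A fully formal version of the above would introduce the shift map $\phi\colon \ZZ_{\geqslant 0}\setminus\{m\}\to\ZZ_{\geqslant 0}$ defined by $\phi(j)=j$ for $j<m$ and $\phi(j)=j-1$ for $j>m$, verify that the set of $\tta$-positions (resp.\ $\ttb$-positions) of $\ttw'$ is $\phi$ applied to the $\tta$-positions of $\ttw$ after dropping $m$ (resp.\ to the $\ttb$-positions of $\ttw$), and then read off both identities from the definition of $\phi$.
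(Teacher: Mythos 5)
Your proposal is correct and follows essentially the same route as the paper's proof: identify which physical symbol of $\ttw$ becomes the $n$-th $\tta$ (resp.\ $\ttb$) of $\ttw'$, and track the one-position leftward shift of everything after the deleted first $\tta$. Your version is simply a more careful formalization (via the explicit shift map and the observation that $p_{\ttb,\ttw}(n)<m$ exactly when $n<k$), and it is sound.
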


\begin{proof}
For the first equality, the $n$-th $\tta$ in $\ttw'$ comes from the $(n+1)$-th $\tta$ in $\ttw$. Since we deleted the first $\tta$ in the list, all positions of these $\tta$'s shift to the left one position.

For the second equality, note that the $\ttb$'s before the first $\tta$ stay the same, while the $\ttb$'s after the first $\tta$ in $\ttw$ are shifted to the left one position.
 \end{proof}

Lemma \ref{lem:Deletepositions} has the following consequences.

\begin{corollary}\label{cor2} Let $\ttw \in \W$ and $k >0$. Then $r_{D^k(\ttw)}(n)=r_\ttw(n+k)$ for all sufficiently large $n$.\qed
\end{corollary}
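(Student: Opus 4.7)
The plan is to prove the claim by induction on $k$, with the substantive work being the base case $k=1$ and the inductive step being purely formal.

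For the base case, write $D(\ttw) = D_\tta(D_\ttb(\ttw))$ and apply Lemma~\ref{lem:Deletepositions} twice. I first need an analog of Lemma~\ref{lem:Deletepositions} for $D_\ttb$; this follows from the identity $D_\ttb = \bar{\cdot}\circ D_\tta\circ\bar{\cdot}$ together with Proposition~\ref{prop12}(b), which swaps the roles of $\tta$ and $\ttb$. Setting $\ttw_1 := D_\ttb(\ttw)$, the reflected lemma gives $p_{\ttb,\ttw_1}(n) = p_{\ttb,\ttw}(n+1)-1$ for all $n$, and $p_{\tta,\ttw_1}(n) = p_{\tta,\ttw}(n)-1$ for all $n$ past an explicit threshold (namely, once the $n$-th $\tta$ lies to the right of the deleted first $\ttb$). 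Then applying Lemma~\ref{lem:Deletepositions} directly to $\ttw_1$, for all sufficiently large $n$,
\[
p_{\tta,D(\ttw)}(n) = p_{\tta,\ttw_1}(n+1) - 1 = p_{\tta,\ttw}(n+1) - 2,
\]
\[
p_{\ttb,D(\ttw)}(n) = p_{\ttb,\ttw_1}(n) - 1 = p_{\ttb,\ttw}(n+1) - 2.
\]
Subtracting yields $r_{D(\ttw)}(n) = r_\ttw(n+1)$ for all sufficiently large $n$, which is the base case.

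For the inductive step, assume the claim for $k-1$. Applying the base case to the word $D^{k-1}(\ttw)$ gives $r_{D^k(\ttw)}(n) = r_{D^{k-1}(\ttw)}(n+1)$ for $n$ large, and by the inductive hypothesis (applied at index $n+1$, which is still eventually large), $r_{D^{k-1}(\ttw)}(n+1) = r_\ttw(n+k)$. Composing the two equalities and taking $n$ large enough that both hold simultaneously gives the result.

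The only real obstacle is bookkeeping: one must track the ``sufficiently large $n$'' thresholds across the two deletion steps and through the induction. The key conceptual point that makes this painless is that Lemma~\ref{lem:Deletepositions} guarantees the positional shift becomes a fixed constant ($1$ for $\tta$-positions always, and $1$ for $\ttb$-positions past a finite threshold). After $k$ applications of $D$, both position functions are ultimately shifted by the \emph{same} constant $2k$, so the shift cancels in the difference $p_\ttb - p_\tta$, leaving only the index shift by $k$ that produces $r_\ttw(n+k)$.
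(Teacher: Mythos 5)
Your proof is correct and follows the route the paper intends: the paper states this corollary (and Proposition~\ref{p1}) as an unproved consequence of Lemma~\ref{lem:Deletepositions}, and your argument is precisely the iteration of that lemma (together with its reflected version for $D_\ttb$, which follows from $D_\ttb\circ\bar{\cdot}=\bar{\cdot}\circ D_\tta$), with the induction on $k$ and the threshold bookkeeping made explicit. The key observation --- that both $p_\tta$ and $p_\ttb$ are eventually shifted by the same constant $2k$ in position and by $k$ in index, so the positional shift cancels in $r = p_\ttb - p_\tta$ --- is exactly what makes the corollary immediate.
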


\begin{proposition}\label{p1} Let $\ttw \in \tta\ttb\W\cup \ttb\tta\W$. For all $n$, we have $r_{D(\ttw)}(n)=r_\ttw(n+1)$.\qed
\end{proposition}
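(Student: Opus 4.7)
My plan is to reduce this to two applications of Lemma~\ref{lem:Deletepositions} (and its obvious analog for $D_\ttb$), exploiting the fact that the hypothesis $\ttw \in \tta\ttb\W \cup \ttb\tta\W$ forces the cutoff index $k$ in that lemma to equal $1$ at every step, so the positions of all remaining letters shift uniformly.

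First I would treat the case $\ttw \in \tta\ttb\W$. Set $\ttw' := D_\tta(\ttw)$ and $\ttw'' := D_\ttb(\ttw') = D(\ttw)$. Since $p_{\ttb,\ttw}(1) = 1 > 0 = p_{\tta,\ttw}(1)$, the index $k$ appearing in Lemma~\ref{lem:Deletepositions} equals $1$, so for all $n \geqslant 1$,
\[
p_{\tta,\ttw'}(n) = p_{\tta,\ttw}(n+1) - 1, \qquad p_{\ttb,\ttw'}(n) = p_{\ttb,\ttw}(n) - 1.
\]
The word $\ttw'$ begins with $\ttb$ (because $\ttw$ begins with $\tta\ttb$), and its first $\tta$ sits at position $p_{\tta,\ttw'}(1) = p_{\tta,\ttw}(2) - 1 \geqslant 1 > 0$. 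Applying the $D_\ttb$-analog of Lemma~\ref{lem:Deletepositions} to $\ttw'$ therefore again forces the cutoff index to be $1$, and yields
\[
p_{\ttb,\ttw''}(n) = p_{\ttb,\ttw'}(n+1) - 1, \qquad p_{\tta,\ttw''}(n) = p_{\tta,\ttw'}(n) - 1
\]
for all $n \geqslant 1$. Chaining the two gives
\[
p_{\tta,\ttw''}(n) = p_{\tta,\ttw}(n+1) - 2, \qquad p_{\ttb,\ttw''}(n) = p_{\ttb,\ttw}(n+1) - 2,
\]
so the two constant shifts cancel in the difference and I obtain $r_{D(\ttw)}(n) = p_{\ttb,\ttw}(n+1) - p_{\tta,\ttw}(n+1) = r_\ttw(n+1)$.

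For the case $\ttw \in \ttb\tta\W$, I would run the same argument with the roles of $\tta$ and $\ttb$ interchanged, using that $D = D_\ttb \circ D_\tta$ as well (since $D_\tta$ and $D_\ttb$ commute). Alternatively, one can invoke Proposition~\ref{prop12}(b) together with the fact that $D$ commutes with the reflection operator: if $\ttw \in \ttb\tta\W$ then $\overline{\ttw} \in \tta\ttb\W$, and $r_{D(\ttw)} = -r_{D(\overline{\ttw})}$ reduces the statement to the first case.

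I do not expect a genuine obstacle here; the content of the proposition is really just the observation that removing the very first $\tta$ and the very first $\ttb$ shifts \emph{all} subsequent $\tta$-positions (and separately all subsequent $\ttb$-positions) to the left by the same amount, namely $2$. The only point requiring a little care is verifying that the cutoff index $k$ in Lemma~\ref{lem:Deletepositions} equals $1$ at each application; the prefix hypothesis $\tta\ttb$ or $\ttb\tta$ is precisely what guarantees this.
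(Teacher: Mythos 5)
Your proof is correct and is exactly the argument the paper intends: the proposition is stated with no written proof precisely because it is the immediate consequence of Lemma~\ref{lem:Deletepositions} (and its $D_\ttb$-analog) that you spell out, with the prefix hypothesis forcing the cutoff index $k=1$ in both applications so that all positions shift uniformly by $2$. The verification that $k=1$ at each step and the reflection argument for the $\ttb\tta\W$ case are both handled properly.
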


With the language of the deletion operator in hand, we briefly turn back to periodic sequences.

\begin{lemma} Let $\ttw \in \W$. Then, $\Delta p_{\ttb}$ is periodic if, and only if, $D_{\tta}^{p_{\ttb}(1)}(\ttw)$ is periodic.
\end{lemma}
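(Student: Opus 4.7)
The plan is to combine a straightforward bookkeeping calculation with the already-proved Lemma~\ref{lem:per-seq}, after first reducing to the case of a word that starts with $\ttb$.

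First I would note that, by the very definition of $p_\ttb(1)$, the word $\ttw$ must have the form $\ttw = \tta^{p_\ttb(1)}\ttb\ttw''$ for some infinite suffix $\ttw''$, since every bit strictly before position $p_\ttb(1)$ has to be an $\tta$ (with the convention $\tta^0=\varepsilon$ when $p_\ttb(1)=0$). Applying $D_\tta$ repeatedly simply peels off these leading $\tta$'s one at a time, so
\[
\ttw' := D_\tta^{p_\ttb(1)}(\ttw)=\ttb\ttw''\in \ttb\W.
\]

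Second, I would show that $\Delta p_{\ttb,\ttw'}=\Delta p_{\ttb,\ttw}$ as sequences. Because the removed prefix $\tta^{p_\ttb(1)}$ contains no $\ttb$, every occurrence of $\ttb$ in $\ttw$ has its position shifted down by exactly $p_\ttb(1)$ in $\ttw'$. This follows from a $p_\ttb(1)$-fold application of Lemma~\ref{lem:Deletepositions}: at each intermediate stage the first letter is an $\tta$ at position $0$, so the parameter $k$ of that lemma equals $1$, giving $p_{\ttb}(n)\mapsto p_{\ttb}(n)-1$ for every $n$. After all $p_\ttb(1)$ deletions we obtain $p_{\ttb,\ttw'}(n)=p_{\ttb,\ttw}(n)-p_\ttb(1)$, and the constant shift vanishes upon taking first differences. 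Hence $\Delta p_{\ttb,\ttw'}$ is periodic if and only if $\Delta p_{\ttb,\ttw}$ is.

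Third, I would close the argument by applying Lemma~\ref{lem:per-seq} to the reflected word. Since $\ttw'\in\ttb\W$, we have $\overline{\ttw'}\in\Wa$, and reflection exchanges the roles of the two letters, so $\Delta p_{\tta,\overline{\ttw'}}=\Delta p_{\ttb,\ttw'}$. By Lemma~\ref{lem:per-seq} applied to $\overline{\ttw'}$, the sequence $\Delta p_{\tta,\overline{\ttw'}}$ is periodic if and only if $\overline{\ttw'}$ is periodic; and $\overline{\ttw'}$ is periodic if and only if $\ttw'$ is periodic, because $\bar{\cdot}$ is a letter-wise involution on $\Sigma^\omega$. Chaining these equivalences with the one from the previous step yields
\[
\Delta p_{\ttb,\ttw} \text{ is periodic} \ \Longleftrightarrow\ \Delta p_{\ttb,\ttw'} \text{ is periodic} \ \Longleftrightarrow\ \ttw'\text{ is periodic},
\]
which is the stated equivalence.

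The proof is really a chain of routine reductions, so there is no substantial obstacle; the only subtlety is the uniform handling of the edge case $p_\ttb(1)=0$, for which $D_\tta^{p_\ttb(1)}$ is the identity and the shift step is vacuous, so the conclusion reduces immediately to the reflection step.
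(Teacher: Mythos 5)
Your proof is correct and follows essentially the same route as the paper: delete the initial run of $\tta$'s to see that $\Delta p_\ttb$ is unchanged, then invoke Lemma~\ref{lem:per-seq} on the resulting word in $\Wb$. The only difference is that you spell out the details the paper leaves implicit, namely the shift computation via Lemma~\ref{lem:Deletepositions} and the reflection needed to apply Lemma~\ref{lem:per-seq} (which is stated for $\Wa$) to a word starting with $\ttb$.
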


\begin{proof}
Since $\ttw' = D_{\tta}^{p_{\ttb}(1)}(\ttw)$ is just $\ttw$ with its initial run of $\tta$'s deleted, if there are any, then $\Delta p_{\ttb,\ttw} = \Delta p_{\ttb, \ttw'}$. Hence, because $\ttw' \in \mathcal W_\ttb$, the conclusion follows from Lemma \ref{lem:per-seq}.
\end{proof}

\begin{definition} The \emph{prefix operator}, ${\rm Pre}_{u}: \W \rightarrow u\W$ is the operator that adds the finite word $u$ to the start of any word. That is, ${\rm Pre}_{u}(\ttw) = u\ttw$.
\end{definition}

The identity ${\rm Pre}_{u}\circ \bar{\cdot} =\bar{\cdot} \circ {\rm Pre}_{\bar{u}}$ is immediate from the definitions. We record a few further properties of the prefix operator in the following lemmas.

\begin{lemma}\label{lem:12} We have $D \circ {\rm Pre}_{\tta \ttb} = D \circ {\rm Pre}_{\ttb \tta} =\mbox{Id}$. Moreover, for any $\ttw \in \W$, we have ${\rm Pre}_{\tta \ttb} \circ D(\ttw)=\ttw$ if, and only if, $\ttw \in \tta\ttb\W$. \qed
\end{lemma}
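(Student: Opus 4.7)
My plan is to verify both claims by unwinding the definitions of ${\rm Pre}$ and $D$; no real computation is required since everything is about the location of the first $\tta$ and first $\ttb$ in a prepended word.

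For the first identity, I would fix $\ttw \in \W$ and compute $D({\rm Pre}_{\tta\ttb}(\ttw)) = D(\tta\ttb\ttw)$. In the word $\tta\ttb\ttw$, position $0$ is the first $\tta$ and position $1$ is the first $\ttb$, so $D_\tta$ deletes position $0$, leaving $\ttb\ttw$, and then $D_\ttb$ deletes the new position $0$, leaving exactly $\ttw$. Because $D_\tta$ and $D_\ttb$ commute (as noted in the text just before the statement), the order in which the two deletions are performed does not matter. The identity $D \circ {\rm Pre}_{\ttb\tta} = \mbox{Id}$ follows verbatim with the two letters interchanged.

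For the moreover part, I would prove both implications separately. If $\ttw \in \tta\ttb\W$, then $\ttw = \tta\ttb\ttu$ for some $\ttu \in \W$, and the first identity (applied to $\ttu$) gives $D(\ttw) = \ttu$ and hence ${\rm Pre}_{\tta\ttb}(D(\ttw)) = \tta\ttb\ttu = \ttw$. Conversely, the image of ${\rm Pre}_{\tta\ttb}$ is by definition contained in $\tta\ttb\W$, so any $\ttw$ satisfying $\ttw = {\rm Pre}_{\tta\ttb}(D(\ttw))$ must itself lie in $\tta\ttb\W$.

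There is no serious obstacle here; the only point that wants a moment's care is the claim that in the word $\tta\ttb\ttu$ the first occurrences of $\tta$ and $\ttb$ really are the two prepended letters. This is immediate because they sit at positions $0$ and $1$, while every letter of $\ttu$ occupies position $\geqslant 2$, so no earlier occurrence of either letter is possible. The same remark handles the $\ttb\tta$ case symmetrically.
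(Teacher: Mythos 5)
Your proof is correct, and it is exactly the direct unwinding of the definitions that the paper has in mind---the lemma is stated there with the proof omitted as immediate. The one point worth isolating (that in $\tta\ttb\ttw$ the prepended letters are necessarily the first occurrences of $\tta$ and $\ttb$, so $D$ removes precisely the prefix) is the point you do isolate, so nothing is missing.
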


\begin{lemma} Let $\ttw \in \W$. Then
\[
p_{\tta,{\rm Pre}_\tta(\ttw)}(n)=
\begin{cases}
 0    & \mbox{ if } n=1 \\
 p_{\tta,\ttw}(n-1)+1   & \mbox{ if } n\geqslant 2,
\end{cases}
\qquad\mbox{and}\qquad
p_{\ttb,{\rm Pre}_\tta(\ttw)}(n) =  p_{\ttb,\ttw}(n)+1 \,.
\]
In particular, $r_{{\rm Pre}_\tta(\ttw)}(n) > r_{\ttw}(n)$.
\end{lemma}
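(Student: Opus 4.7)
The plan is to unpack the definition ${\rm Pre}_\tta(\ttw) = \tta\ttw$ and then directly read off what happens to the positions of $\tta$'s and $\ttb$'s under a single left-prepending of the letter $\tta$. Because this is just a shift with one new letter inserted at position $0$, I expect both position identities to fall out in a line or two; the only mild obstacle is the case split in the $\tta$-count (the new letter enters the count as the first $\tta$, shifting the indexing by one) versus the $\ttb$-count (no new $\ttb$ is added, so only positions shift).

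First, for $p_{\tta, {\rm Pre}_\tta(\ttw)}$: the leading letter of $\tta\ttw$ is an $\tta$ at position $0$, giving $p_{\tta,{\rm Pre}_\tta(\ttw)}(1) = 0$. For $n \geqslant 2$, the $n$-th occurrence of $\tta$ in $\tta\ttw$ is the $(n-1)$-th occurrence of $\tta$ in $\ttw$, whose position is increased by $1$ due to the single-letter shift; this yields $p_{\tta,{\rm Pre}_\tta(\ttw)}(n) = p_{\tta,\ttw}(n-1)+1$. For $p_{\ttb, {\rm Pre}_\tta(\ttw)}$: no new $\ttb$ is added, so the $n$-th $\ttb$ of $\tta\ttw$ is the $n$-th $\ttb$ of $\ttw$, again shifted right by $1$, giving $p_{\ttb,{\rm Pre}_\tta(\ttw)}(n) = p_{\ttb,\ttw}(n)+1$.

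For the ``in particular'' claim, I would split into cases on $n$. When $n=1$, the formulas above give
\[
r_{{\rm Pre}_\tta(\ttw)}(1) = p_{\ttb,\ttw}(1)+1,
\qquad
r_\ttw(1)=p_{\ttb,\ttw}(1)-p_{\tta,\ttw}(1),
\]
so the difference is $1+p_{\tta,\ttw}(1) \geqslant 1 > 0$. When $n \geqslant 2$, a direct subtraction gives
\[
r_{{\rm Pre}_\tta(\ttw)}(n)-r_{\ttw}(n) = p_{\tta,\ttw}(n)-p_{\tta,\ttw}(n-1),
\]
which is strictly positive because $p_{\tta,\ttw}$ is strictly increasing (as observed right after the definition of the position functions). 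This handles both cases and yields the strict inequality $r_{{\rm Pre}_\tta(\ttw)}(n) > r_\ttw(n)$ for every $n \in \NN$, completing the proof. No step looks to be a serious obstacle; the only thing to be a bit careful about is not conflating the two different index shifts (the ``$n-1$'' on the $\tta$-side versus the ``$n$'' on the $\ttb$-side).
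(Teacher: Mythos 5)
Your proposal is correct and follows essentially the same route as the paper: read off the position shifts caused by prepending a single $\tta$ (with the index offset on the $\tta$-side only), then verify the strict inequality by the same case split on $n=1$ versus $n\geqslant 2$, using $p_{\tta,\ttw}(1)\geqslant 0$ and the strict monotonicity of $p_{\tta,\ttw}$ respectively.
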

\begin{proof}
Since ${\rm Pre}_\tta(\ttw)$ attaches the prefix $\tta$ for the word $\ttw$, the $n$-th $\ttb$ moves to the right one position, while the $n$-th $\tta$ moves one to the right and becomes the $(n+1)$-th $\tta$ in ${\rm Pre}_\tta(\ttw)$. The formulas for $p_{\tta,{\rm Pre}_\tta(\ttw)}$ and $p_{\ttb,{\rm Pre}_\tta(\ttw)}$ are now obvious.

Finally, we have
\[
r_{{\rm Pre}_\tta(\ttw)}(1)=p_{\ttb,{\rm Pre}_\tta(\ttw)}(1)-p_{\tta,{\rm Pre}_\tta(\ttw)}(1)=p_{\ttb,\ttw}(1)+1-0> p_{\ttb,\ttw}(1)-p_{\tta,\ttw}(1)= r_{\ttw}(1).
\]
And, for $n \geqslant 2$, we have
\begin{multline*}
r_{{\rm Pre}_\tta(\ttw)}(n)=p_{\ttb,{\rm Pre}_\tta(\ttw)}(n)-p_{\tta,{\rm Pre}_\tta(\ttw)}(n)=p_{\ttb,\ttw}(n)+1-(p_{\tta,\ttw}(n-1)+1)\\ = p_{\ttb,\ttw}(n)-p_{\tta,\ttw}(n-1)> p_{\ttb,\ttw}(n)-p_{\tta,\ttw}(n)= r_{\ttw}(n) \,.\qedhere
\end{multline*}
\end{proof}

By combining this result with Lemma~\ref{lem:12} and Proprosition \ref{prop12}, we get the following corollary.

\begin{corollary} Let $\ttw \in \W$. Then
\[
p_{\ttb,{\rm Pre}_\ttb(\ttw)}(n)=
\begin{cases}
 0    & \mbox{ if } n=1 \\
 p_{\ttb,\ttw}(n-1)+1   & \mbox{ if } n\geqslant 2,
\end{cases}
\qquad\mbox{and}\qquad
p_{\tta,{\rm Pre}_\ttb(\ttw)}(n) =  p_{\tta,\ttw}(n)+1 \,.
\]
In particular, $r_{{\rm Pre}_\ttb(\ttw)}(n) < r_{\ttw}(n)$. \qed
\end{corollary}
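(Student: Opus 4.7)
The plan is to derive this $\ttb$-prefixing result directly from the preceding $\tta$-prefixing lemma by conjugating with the reflection operator. The key identity is ${\rm Pre}_u\circ\bar{\cdot}=\bar{\cdot}\circ{\rm Pre}_{\bar u}$, noted right after the definition of the prefix operator. Specializing to $u=\ttb$ and using $\bar\ttb=\tta$ gives
\[
{\rm Pre}_\ttb(\ttw)\;=\;\overline{{\rm Pre}_\tta(\overline{\ttw})} \,.
\]
Since $\ttw\in\W$ implies $\bar\ttw\in\W$ by Proposition~\ref{prop12}(a), the previous lemma applies to $\bar\ttw$.

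Next, I would translate position functions through the reflection. For any $\ttv\in\W$, the reflection swaps the roles of the two letters, so $p_{\tta,\bar\ttv}(n)=p_{\ttb,\ttv}(n)$ and $p_{\ttb,\bar\ttv}(n)=p_{\tta,\ttv}(n)$. Applying this with $\ttv={\rm Pre}_\tta(\bar\ttw)$ (so that $\bar\ttv={\rm Pre}_\ttb(\ttw)$), the previous lemma's formula for $p_{\tta,{\rm Pre}_\tta(\bar\ttw)}(n)$ becomes the claimed formula for $p_{\ttb,{\rm Pre}_\ttb(\ttw)}(n)$, once one rewrites $p_{\tta,\bar\ttw}(n-1)=p_{\ttb,\ttw}(n-1)$; and similarly its formula for $p_{\ttb,{\rm Pre}_\tta(\bar\ttw)}(n)$ becomes the formula for $p_{\tta,{\rm Pre}_\ttb(\ttw)}(n)$, using $p_{\ttb,\bar\ttw}(n)=p_{\tta,\ttw}(n)$.

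For the inequality $r_{{\rm Pre}_\ttb(\ttw)}(n)<r_\ttw(n)$, I would invoke Proposition~\ref{prop12}(b), which gives $r_{\bar\ttv}=-r_\ttv$ for every $\ttv\in\W$. The previous lemma applied to $\bar\ttw$ yields
\[
r_{{\rm Pre}_\tta(\bar\ttw)}(n)\;>\;r_{\bar\ttw}(n)\;=\;-r_\ttw(n).
\]
Combining ${\rm Pre}_\ttb(\ttw)=\overline{{\rm Pre}_\tta(\bar\ttw)}$ with Proposition~\ref{prop12}(b) again, $r_{{\rm Pre}_\ttb(\ttw)}(n)=-r_{{\rm Pre}_\tta(\bar\ttw)}(n)$, and negating the displayed inequality gives the desired $r_{{\rm Pre}_\ttb(\ttw)}(n)<r_\ttw(n)$.

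There is no genuine obstacle here, since the whole argument is a routine dualization; the only care needed is bookkeeping, namely keeping straight which position function is exchanged with which under reflection and being attentive to the reversal of inequality when negating. Everything else is a direct substitution into the previous lemma.
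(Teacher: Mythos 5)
Your proposal is correct and is essentially the argument the paper intends: the corollary is stated without proof, prefaced by ``By combining this result with Lemma~\ref{lem:12} and Proposition~\ref{prop12}, we get the following corollary,'' i.e.\ exactly the dualization via the reflection operator that you carry out. Your bookkeeping of the swapped position functions and the sign reversal of $r$ under reflection is accurate, so nothing is missing.
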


The deletion and prefix operators allow one to classify balanced words. Recall that a finite word $u$ is \emph{balanced} if it contains an equal number of $\tta$'s and $\ttb$'s.

\begin{lemma} Let $u\in\Sigma^*$ be a word of length $2k$. Then, $u$ is balanced if, and only if, $D^{k} \circ {\rm Pre}_u = \mbox{Id}$.
\end{lemma}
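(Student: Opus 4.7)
The plan is to use the fact (noted in the paper) that $D_\tta$ and $D_\ttb$ commute, so $D^k = D_\tta^k \circ D_\ttb^k$, and then to track what happens to the letters of the prefix $u$ versus the letters of $\ttw$ when each of these factors is applied. Let $j = |u|_\tta$ and $m = |u|_\ttb$, so $j + m = 2k$. Since $u$ sits entirely before $\ttw$ in $u \ttw$, the first $j$ occurrences of $\tta$ in $u\ttw$ are precisely the $\tta$'s of $u$, and similarly the first $m$ occurrences of $\ttb$ are the $\ttb$'s of $u$.

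For the forward direction, I would assume $u$ balanced, i.e., $j = m = k$. Then the first $k$ $\tta$'s of $u\ttw$ are exactly the $\tta$'s of $u$, so $D_\tta^k(u\ttw)$ is obtained by deleting these, leaving the $\ttb$'s of $u$ in their original relative order (which is just $\ttb^k$) followed by $\ttw$; that is, $D_\tta^k(u\ttw) = \ttb^k \ttw$. Applying $D_\ttb^k$ then strips off the prefix $\ttb^k$, yielding $\ttw$. Hence $D^k \circ \mathrm{Pre}_u = \mathrm{Id}$.

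For the reverse direction, I would prove the contrapositive: if $u$ is not balanced, then $D^k \circ \mathrm{Pre}_u$ is not the identity. Without loss of generality $j > k$ (the case $j < k$ follows either by an identical argument with $\tta$ and $\ttb$ exchanged, or formally via the identities $D \circ \bar{\cdot} = \bar{\cdot} \circ D$ and $\mathrm{Pre}_u \circ \bar{\cdot} = \bar{\cdot} \circ \mathrm{Pre}_{\bar u}$). Choose any $\ttw \in \ttb\W$. Applying $D_\ttb^k$ to $u\ttw$: since $m = 2k - j < k$, all $m$ $\ttb$'s of $u$ are removed plus the first $k - m = j - k$ $\ttb$'s of $\ttw$; the surviving letters of $u$ form the word $\tta^j$, so we obtain $\tta^j \ttw'$ where $\ttw'$ denotes $\ttw$ with its first $j-k$ $\ttb$'s deleted. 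Applying $D_\tta^k$ next removes $k$ $\tta$'s, all of which lie inside the $\tta^j$ prefix (since $j > k$), producing $\tta^{j-k}\ttw'$. Because $\ttw$ begins with $\ttb$ while $\tta^{j-k}\ttw'$ begins with $\tta$, the two words differ, so $D^k(u\ttw) \neq \ttw$.

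The only real obstacle is bookkeeping: making sure the commutativity $D^k = D_\tta^k \circ D_\ttb^k$ is invoked cleanly and that the counting of how many letters of each type are removed from $u$ versus from $\ttw$ is correct in the unbalanced case. Using reflection to collapse $j > k$ and $j < k$ into a single case keeps the argument short.
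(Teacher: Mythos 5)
Your proof is correct and follows essentially the same route as the paper's: the forward direction is the same direct deletion count, and your contrapositive for the converse rests on the same key observation that an excess of $\tta$'s in $u$ forces $D^k(u\ttw)$ to begin with $\tta$ (you exhibit a single witness $\ttw\in\ttb\W$, the paper notes the image lands in $\tta\W\subsetneq\W$ --- the same idea). The extra bookkeeping via $D^k=D_\tta^k\circ D_\ttb^k$ and the reflection trick for the symmetric case are both fine.
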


\begin{proof}
Let $u\in\Sigma^*$ be a word of length $2k$. Note that for any $\ttw \in \W$,
\[
D^{k} \circ {\rm Pre}_u(\ttw)= D^{k} (u \ttw )
\]
deletes the first $k$ $\tta$'s and the first $k$ $\ttb$'s in $u \ttw$.

Assume that $u$ is balanced. Since $u$ contains $k$ $\tta$'s and $k$ $\ttb$, $D^{k} (u \ttw )$ deletes the first $2k$ letters, which is the prefix $u$. That is, $D^{k} (u \ttw)=\ttw$.

Now suppose that $D^{k} \circ {\rm Pre}_u = \mbox{Id}$, and, toward a contradiction, let us further assume that $u$ is not balanced. Then, $u$ either contains at least $k+1$ $\tta'$s or it contains at least $k+1$ $\ttb'$s. Without loss of generality, suppose that $u$ contains at least $k+1$ $\tta'$s. Let $\ttw \in \W$ be arbitrary and consider
\[
D^{k} \circ {\rm Pre}_u (\ttw)= D^{k} (u \ttw) \,.
\]
Here, $u$ contains at most $k-1$ $\ttb$'s. Therefore, $D^k$ deletes all the $\ttb$'s in $u$, and deletes only $k$ of the at least $k+1$ $\tta$'s in $u$. This immediately implies that $D^{k} (u \ttw)$ starts with an $\tta$. Thus, we have $D^{k} \circ {\rm Pre}_u( \W) \subseteq \tta\W\subsetneq\W$, a contradiction that proves the result.
\end{proof}

We finish our focus on the prefix operator by showing that the addition of balanced words eventually shifts the (relative) position function(s).

\begin{lemma} Let $u\in\Sigma^*$ be a balanced word of length $2k$ and $\ttw \in \W$. Then, $p_{\tta,{\rm Pre}_{u}(\ttw)}(n+k) = p_{\tta,\ttw}(n)+2k$ and $p_{\ttb,{\rm Pre}_{u}(\ttw)}(n+k) = p_{\ttb,\ttw}(n)+2k$. In particular, $r_{{\rm Pre}_{u}(\ttw)}(n+k) = r_{\ttw}(n)$.
\end{lemma}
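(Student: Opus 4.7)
The plan is to exploit the fact that, since $u$ is balanced of length $2k$, the prefix $u$ contains exactly $k$ letters $\tta$ and $k$ letters $\ttb$, all sitting in the positions $0,1,\ldots,2k-1$ of ${\rm Pre}_u(\ttw)=u\ttw$. My first step is to use this to identify which occurrences of each letter in $u\ttw$ correspond to which occurrences in $\ttw$. Because $u$ is a prefix and contributes exactly $k$ $\tta$'s (resp.\ $k$ $\ttb$'s), all of which appear before any letter of $\ttw$, the first $k$ occurrences of $\tta$ (resp.\ $\ttb$) in $u\ttw$ are exhausted inside $u$. Equivalently, for every $n\geqslant 1$, the $(n+k)$-th occurrence of $\tta$ in $u\ttw$ is the \emph{same} letter as the $n$-th occurrence of $\tta$ in $\ttw$, and similarly for $\ttb$. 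This identification is already implicit in the previous lemma $D^{k}\circ {\rm Pre}_u = \mbox{Id}$, which I would cite for the bookkeeping.

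Second, I would track the positional shift. Since $u$ occupies the $2k$ leftmost positions of $u\ttw$, every letter originally from $\ttw$ now sits in $u\ttw$ at a position that is exactly $2k$ larger than in $\ttw$. Combined with the identification above, this immediately yields the two claimed identities
\[
p_{\tta,{\rm Pre}_u(\ttw)}(n+k) = p_{\tta,\ttw}(n) + 2k, \qquad p_{\ttb,{\rm Pre}_u(\ttw)}(n+k) = p_{\ttb,\ttw}(n) + 2k.
\]
Subtracting the first from the second cancels the $+2k$ and produces the relative position statement $r_{{\rm Pre}_u(\ttw)}(n+k) = r_\ttw(n)$.

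There is essentially no obstacle; the argument is pure bookkeeping. The one point worth stressing is why the balanced hypothesis is essential: the shift on the $\tta$-positions and the shift on the $\ttb$-positions happen to match at exactly $+k$ precisely because the counts $|u|_\tta$ and $|u|_\ttb$ coincide. Were they to differ, the two displayed shifts on the left-hand side would have different index offsets, and the cancellation giving $r_{{\rm Pre}_u(\ttw)}(n+k)=r_\ttw(n)$ would break down. So the only careful check in the writeup is to confirm that balancedness is exactly what forces a common index offset of $k$ on both letter streams.
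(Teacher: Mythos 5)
Your proposal is correct and follows essentially the same bookkeeping argument as the paper: each letter of $\ttw$ shifts right by $2k$ positions and its rank increases by $k$ because $u$ contributes exactly $k$ letters of each type before it. The extra remark on why balancedness forces the common index offset is a nice touch but does not change the route.
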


\begin{proof}
Each $\tta$ and $\ttb$ in $\ttw$ shifts to the right by $2k$ spots in ${\rm Pre}_{u}(\ttw)=u \ttw$, and has the rank (in order) increased by $k$, since there are $k$ bits of same type introduced before it. The claim follows.
\end{proof}

In the last part of this section, we discuss how the (relative) position functions behave on words under a binary substitution. Here, since we are interested in the set $\W$ of words that have infinitely many occurrences of $\tta$ and $\ttb$, we restrict ourselves to the consideration of binary substitutions $\varrho$ that satisfy $\tta,\ttb\in\varrho(\tta\ttb)$.

\begin{example} The reflection operator
\[
  \bar{\cdot}\, :\begin{cases} \tta \rightarrow \ttb \\
     \ttb \rightarrow \tta\end{cases}
\]
satisfies $\overline{ab}=ba$, so $a,b\in\overline{ab}$.\exend
\end{example}

In addition to the reflection operator, a particularly well-behaved family contains the cloning substitutions.

\begin{definition} Let $k >1$ be any positive integer. The \emph{cloning substitution} $\phi_k: \Sigma \to \Sigma^*$ is defined by
\[
\phi_k:\begin{cases} \tta\rightarrow \tta^k \\
\ttb\rightarrow \ttb^k \,.\end{cases}
\]
\end{definition}

Before proceeding, let us note in passing that for all $k \geqslant 2$ we have $\phi_k(\Wa) \subsetneq \Wa$ and $\phi_k(\Wb) \subsetneq \Wb$.

\begin{lemma}\label{lem23} Let $\ttw \in \W$ and $k \geqslant 2$.  Then, for all $m \in \ZZ_{\geqslant 0}$ and all $1 \leqslant j \leqslant k$, we have
    \begin{align*}
    p_{\tta,\phi_k(\ttw)}(mk+j)&=k\, p_{\tta,\ttw}(m+1)+j-1 \\
    p_{\ttb, \phi_k(\ttw)}(mk+j)&=k\, p_{\ttb,\ttw}(m+1)+j-1 \\
    r_{\phi_k(\ttw)}(mk+j) &= k\, r_{\ttw}(m+1) \,.
    \end{align*}
\end{lemma}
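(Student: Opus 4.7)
The plan is to unpack the definition of $\phi_k$, observe that it replaces each letter $\ell_n$ in $\ttw$ by the block $\ell_n^k$ at positions $\{kn, kn+1, \dots, kn+k-1\}$ of $\phi_k(\ttw)$, and then simply count which occurrence of $\tta$ (or $\ttb$) lives in which block. The third identity will follow from the first two by subtraction.

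More precisely, writing $\ttw=\ell_0\ell_1\ell_2\cdots$, I would first record the basic position formula: for each $n\in\ZZ_{\geqslant0}$ and each $0\leqslant i\leqslant k-1$, position $kn+i$ of $\phi_k(\ttw)$ is the letter $\ell_n$. In particular, every $\tta$ in $\ttw$ at position $p_{\tta,\ttw}(m+1)$ produces a block of $k$ consecutive $\tta$'s occupying positions $kp_{\tta,\ttw}(m+1), kp_{\tta,\ttw}(m+1)+1, \dots, kp_{\tta,\ttw}(m+1)+k-1$ in $\phi_k(\ttw)$, and analogously for $\ttb$. These blocks are disjoint since $\phi_k$ is a homomorphism that preserves the ordering of letters.

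Next I would count occurrences. Among the letters $\ell_0,\ell_1,\dots,\ell_{p_{\tta,\ttw}(m+1)-1}$ there are exactly $m$ occurrences of $\tta$, so after applying $\phi_k$ these contribute exactly $mk$ occurrences of $\tta$ in $\phi_k(\ttw)$, all strictly to the left of the block coming from $\ell_{p_{\tta,\ttw}(m+1)}$. Consequently the $k$ $\tta$'s of that block are precisely the $(mk+1)$-st through $(mk+k)$-th occurrences of $\tta$ in $\phi_k(\ttw)$, and for $1\leqslant j\leqslant k$ we read off
\[
p_{\tta,\phi_k(\ttw)}(mk+j)=k\,p_{\tta,\ttw}(m+1)+(j-1).
\]
The identical argument with $\tta$ replaced by $\ttb$ gives the formula for $p_{\ttb,\phi_k(\ttw)}$. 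Subtracting the two yields
\[
r_{\phi_k(\ttw)}(mk+j)=k\bigl(p_{\ttb,\ttw}(m+1)-p_{\tta,\ttw}(m+1)\bigr)=k\,r_{\ttw}(m+1),
\]
as required. I do not anticipate any real obstacle: the whole argument is a bookkeeping exercise, and the only thing that needs care is making sure the parametrization $n=mk+j$ with $1\leqslant j\leqslant k$ (rather than $0\leqslant j\leqslant k-1$) is tracked consistently so that the ``$j-1$'' offset lands in the right place.
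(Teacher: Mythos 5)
Your proposal is correct and follows essentially the same route as the paper's proof: decompose $\phi_k(\ttw)$ into the blocks (supertiles) $\tta^k$ and $\ttb^k$, locate the block coming from the $(m+1)$-th $\tta$ of $\ttw$ at starting position $k\,p_{\tta,\ttw}(m+1)$, count the $mk$ earlier occurrences of $\tta$, and read off the formula, with the $\ttb$ case by symmetry and the $r$ identity by subtraction. No gaps.
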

\begin{proof}
Consider the supertiles $\ttA=\phi_k(\tta)= \tta^k$ and $\ttB=\phi_k(\ttb)= \ttb^k$. Now, let $m \geqslant 0$. The $(m+1)$-th $\tta$ appears in $\ttw$ at position $p_\tta(m+1)$. This means that the $(m+1)$-th $\ttA$ is the $p_\tta(m+1)$ supertile in $\phi_k(\ttw)$. Since this is the $(m+1)$-th $\ttA$, and each $\ttA$ contains $k$ $\tta$'s and each $\ttB$ contains no $\tta$, there are exactly $mk$ $\tta's$ before this tile. Therefore, this $\ttA$ contains in order the $(mk+1)$-th, $(mk+2)$-th, $\ldots,$ $(mk+k)$-th $\tta$ in $\phi_k(\ttw)$. Recalling that the indices of $\ttw$ start at $0$, there are $p_\tta(m+1)$ supertiles before this $\ttA$, and each supertile contains $k$ letters. Therefore, the first position of this $\ttA$ is $k\,p_\tta(m+1)$. It follows that the $(mk+1)$-th, $(mk+2)$-th, $\ldots,$ $(mk+k)$-th $\tta$ in $\phi_k(\ttw)$ appear at positions $k\,p_\tta(m+1), k\,p_\tta(m+1)+1, k\,p_\tta(m+1)+2, \ldots, k\,p_\tta(m+1)+k-1$. This shows that for all $1 \leqslant j \leqslant k$, we have
\[
p_{\tta,\phi_k(\ttw)}(mk+j)= kp_{\tta,\ttw}(m+1) +j-1 \,.
\]

The result for $p_{\ttb,\phi_k(\ttw)}$ follows \emph{mutatis mutandis}, and the identity for $r_{\phi_k(\ttw)}$ follows directly.
\end{proof}

The formulas in Lemma~\ref{lem23} can be alternatively written in the form
    \begin{align*}
    p_{\tta,{\phi_k}(\ttw)}(n)&=k\, p_{\tta,\ttw}\left(\left\lfloor \frac{n}{k} \right\rfloor+1\right)+n-k\left\lfloor \frac{n}{k} \right\rfloor-1 \\
    p_{\ttb,{\phi_k}(\ttw)}(n)&=k\, p_{\ttb,\ttw}\left(\left\lfloor \frac{n}{k} \right\rfloor+1\right)+n-k\left\lfloor \frac{n}{k} \right\rfloor-1 \\
    r_{{\phi_k}(\ttw)}(n) &= k\, r_{\ttw}\left(\left\lfloor \frac{n}{k} \right\rfloor+1\right) \,.
    \end{align*}
where $\lfloor \cdot \rfloor$ is the standard floor function. In particular, we have that the ratios ${p_{\tta,{\phi_k}(\ttw)}(n)}/{n}$, ${p_{\ttb,{\phi_k}(\ttw)}(n)}/{n}$ and ${r_{{\phi_k}(\ttw)}(n)}/{n}$ have the same limits of indetermination as the ratios ${p_{\tta,\ttw}(n)}/{n}$, ${p_{\ttb,\ttw}(n)}/{n}$ and ${r_{\ttw}(n)}/{n}$, respectively, as $n\to\infty$.

\section{Mean values of (relative) position functions}\label{sec:meanv}

A nice consequence of Lemma~\ref{lem:1} is that for binary words where the number of consecutive identical letters is bounded, the relative position function is bounded, above and below, by linear functions.

\begin{lemma}\label{r-bounded-by-linear} Let $\ttw \in \Wa$. Denote the longest run of a single letter in ${\tt w}$ by $c$. If
\begin{align}
c:=& \sup\{ k :\ell_{n}=\ell_{n+1}=\cdots = \ell_{n+k-1},n \in \ZZ_{\geqslant 0} \} \nonumber\\
=& \sup \{ \Delta p_a(n) - 1, \Delta p_b(n) - 1 : n \in \ZZ_{\geqslant 0}\} \cup \{ p_b(1)\}<\infty \label{rel-den}\, ,
\end{align}
then,
\begin{align}
 p_\tta(n) & \leqslant (c+1)(n-1)  \label{eq1}\\
 p_\ttb(n) & \leqslant (c+1)n-1  \label{eq2}\,.
\end{align}
In particular, $(1-c)n+1 \leqslant r(n) \leqslant cn$.
\end{lemma}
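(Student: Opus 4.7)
The plan is to exploit the three ingredients packaged into the definition of $c$: an upper bound on $\Delta p_\tta$, an upper bound on $\Delta p_\ttb$, and an upper bound on $p_\ttb(1)$. Each one is extracted from the ``longest run'' description of $c$ via Lemma~\ref{lem:1}: because no block of consecutive identical letters has length more than $c$, the gap between two successive $\tta$'s contains at most $c$ intervening $\ttb$'s (so $\Delta p_\tta(n)\leqslant c+1$), and symmetrically $\Delta p_\ttb(n)\leqslant c+1$; the initial run of $\tta$'s has length at most $c$, so $p_\ttb(1)\leqslant c$. These are the only inputs.

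With those in hand, \eqref{eq1} and \eqref{eq2} are direct telescoping sums. Since $\ttw\in\Wa$ forces $p_\tta(1)=0$,
\[
p_\tta(n)=\sum_{k=1}^{n-1}\Delta p_\tta(k)\leqslant (n-1)(c+1),
\]
which is \eqref{eq1}; and using $p_\ttb(1)\leqslant c$,
\[
p_\ttb(n)=p_\ttb(1)+\sum_{k=1}^{n-1}\Delta p_\ttb(k)\leqslant c+(n-1)(c+1)=(c+1)n-1,
\]
which is \eqref{eq2}.

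For the ``In particular'' clause I would supplement the upper bounds with the trivial lower bounds obtained from $\Delta p_\tta,\Delta p_\ttb\geqslant 1$: namely $p_\tta(n)\geqslant n-1$ (from $p_\tta(1)=0$) and $p_\ttb(n)\geqslant n$ (from $p_\ttb(1)\geqslant 1$, since position~$0$ holds an $\tta$). Pairing \eqref{eq2} with $p_\tta(n)\geqslant n-1$ gives
\[
r(n)=p_\ttb(n)-p_\tta(n)\leqslant (c+1)n-1-(n-1)=cn,
\]
and pairing \eqref{eq1} with $p_\ttb(n)\geqslant n$ gives
\[
r(n)\geqslant n-(n-1)(c+1)=1-c(n-1),
\]
which is the lower bound on $r(n)$.

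There is no real obstacle---this is a telescoping of bounded positive integer sequences---but one point deserves attention: the ``$+1$'' in the lower bound on $r(n)$ (or equivalently the $-1$ in \eqref{eq2}) is exactly the contribution of the assumption $\ttw\in\Wa$, through the bound $p_\ttb(1)\geqslant 1$. Conflating $\W$ with $\Wa$ would shift the constants, so the one place to be careful is invoking $p_\ttb(1)\geqslant 1$ alongside the bound $p_\ttb(1)\leqslant c$ supplied by the definition of $c$.
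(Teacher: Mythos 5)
Your proof is correct and follows essentially the same route as the paper's: extract $\Delta p_\tta(n)\leqslant c+1$, $\Delta p_\ttb(n)\leqslant c+1$, and $p_\ttb(1)\leqslant c$ from the run-length bound via Lemma~\ref{lem:1}, telescope to get \eqref{eq1} and \eqref{eq2}, and combine with the trivial bounds $p_\tta(n)\geqslant n-1$ and $p_\ttb(n)\geqslant n$. One remark: the lower bound you derive, $1-c(n-1)=c(1-n)+1$, is exactly what the paper's own proof obtains, but it differs from the $(1-c)n+1$ printed in the lemma statement, which appears to be a typo --- for instance, for $\ttw=(\tta\ttb\ttb)^{\omega}$ (so $c=2$) one has $r(6)=8-15=-7$, which violates $(1-2)\cdot 6+1=-5$ but satisfies $2(1-6)+1=-9$.
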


\begin{proof} Since $\tta^{c+1}$ and $\ttb^{c+1}$ are not subwords of $\ttw$, by Lemma~\ref{lem:1}, we have both $\Delta p_\tta(n) \leqslant c+1$ and $\Delta p_\ttb(n) \leqslant c+1$. Since $p_\tta(1)=0$ and $p_\ttb(1)\leqslant c$, an easy induction yields
\eqref{eq1} and \eqref{eq2}.

Noting that $p_\tta(1)=0$, we have $p_\tta(n) \geqslant n-1$ and   $p_\ttb(n)\geqslant n$, so that in combinations of the upper bounds from the previous paragraph, give $c(1-n)+1 \leqslant r(n) \leqslant cn$, which finishes the proof of the lemma.
\end{proof}

\begin{remark} Note that the condition \eqref{rel-den} is equivalent to the sets of positions of $\tta$ and $\ttb$, respectively, being relatively dense in $\ZZ_{\geqslant 0}$. In particular, this
always holds for infinite words that are fixed points of primitive substitutions. For the definitions and examples related to relatively dense sets and substitutions, see Baake and Grimm \cite{TAO}.\exend
\end{remark}

In general, the upper bounds in \eqref{eq1} and \eqref{eq2} cannot be improved. To see this, note that for the periodic word $\ttw = (\tta \ttb^{c})^\omega$ we get equality in \eqref{eq1}, and for the periodic word $\ttw = (\tta^{c} \ttb)^\omega$ we get equality in \eqref{eq2}.

\begin{corollary} Let $\ttw \in \W$ whose longest letter run $c<\infty$. Then,
\begin{align*}
 p_\tta(n) & \leqslant (c+1)n-1  \\
 p_\ttb(n) & \leqslant (c+1)n-1 .
\end{align*} In particular, $-cn \leqslant r(n) \leqslant cn $.
\end{corollary}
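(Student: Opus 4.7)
The plan is to reduce the corollary to Lemma~\ref{r-bounded-by-linear} by splitting on whether $\ttw$ starts with $\tta$ or $\ttb$, using the reflection operator to handle the second case. If $\ttw\in\Wa$, Lemma~\ref{r-bounded-by-linear} directly gives $p_\tta(n)\leqslant (c+1)(n-1)$ and $p_\ttb(n)\leqslant (c+1)n-1$, and the first of these bounds is, trivially, at most $(c+1)n-1$. So in that case both stated inequalities hold immediately.

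For the remaining case $\ttw\in\Wb$, I would invoke Proposition~\ref{prop12}(a) to note that $\overline{\ttw}\in\Wa$, and observe that $\overline{\ttw}$ has the same longest letter run $c$ as $\ttw$, since reflection just interchanges the two symbols. Then Lemma~\ref{r-bounded-by-linear} applied to $\overline{\ttw}$ yields $p_{\tta,\overline{\ttw}}(n)\leqslant (c+1)(n-1)$ and $p_{\ttb,\overline{\ttw}}(n)\leqslant (c+1)n-1$. Since reflection swaps the roles of $\tta$ and $\ttb$, we have $p_{\tta,\overline{\ttw}}=p_{\ttb,\ttw}$ and $p_{\ttb,\overline{\ttw}}=p_{\tta,\ttw}$, and transferring the bounds gives both $p_{\ttb,\ttw}(n)\leqslant (c+1)n-1$ and $p_{\tta,\ttw}(n)\leqslant (c+1)n-1$, as required.

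For the bound on $r(n)$, I would use the elementary fact that any strictly increasing function $\NN\to\ZZ_{\geqslant 0}$ satisfies $p(n)\geqslant n-1$, so in particular $p_\tta(n),p_\ttb(n)\geqslant n-1$. Combined with the upper bounds just established, this gives
\[
r(n)=p_\ttb(n)-p_\tta(n)\leqslant \big((c+1)n-1\big)-(n-1)=cn,
\]
and, symmetrically, $-r(n)=p_\tta(n)-p_\ttb(n)\leqslant cn$, so $-cn\leqslant r(n)\leqslant cn$. The proof is essentially a bookkeeping exercise on top of Lemma~\ref{r-bounded-by-linear}, and I do not expect any serious obstacle; the only point requiring care is keeping track of the swap of $\tta$ and $\ttb$ under reflection so that the $\Wb$ case lines up correctly with the statement of the previous lemma.
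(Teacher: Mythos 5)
Your proof is correct and follows essentially the same route as the paper: both reduce the $\Wb$ case to Lemma~\ref{r-bounded-by-linear} via the reflection operator, which swaps the roles of $p_\tta$ and $p_\ttb$, and then combine the resulting upper bounds with the trivial lower bound $p_\alpha(n)\geqslant n-1$ to get $-cn\leqslant r(n)\leqslant cn$. Your write-up is somewhat more explicit about the bookkeeping (in particular the identities $p_{\tta,\overline{\ttw}}=p_{\ttb,\ttw}$ and $p_{\ttb,\overline{\ttw}}=p_{\tta,\ttw}$) than the paper's one-line argument, but the content is the same.
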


\begin{proof} By applying the reflection operator, Lemma~\ref{r-bounded-by-linear} we have, for $\ttw \in \Wb$ that $p_\tta(n) \leqslant (c+1)n-1$,  $p_\ttb(n) \leqslant (c+1)(n-1)$, and $-cn \leqslant r(n) \leqslant c(n-1)-1$. The result follows immediately.
\end{proof}

For a word $\ttw$ and a letter $\alpha\in\Sigma$, denote by $\#_\alpha(n)=\#_{\alpha,\ttw}(n)$ the counting function of occurrences of $\alpha$ in the first $n$ bits of $\ttw$. Note that $\#_\alpha$ is an increasing function. For $\alpha \in \{ a, b \}$, it is immediate that $\#_\alpha(p_{\alpha}(n))=n,$ and $\#_{\alpha}(m)=n$ if, and only if, $p_\alpha(n) \leqslant m < p_{\alpha}(n+1)$ for all $n\geqslant 1$. In particular, $\#_\alpha \circ p_\alpha = \mbox{Id}$,
$p_\alpha \circ \#_\alpha \leqslant \mbox{Id},$ and $p_\alpha \circ (\#_\alpha+1) > \mbox{Id}$. Here, one has to be mindful of indices as $p_\alpha$ has domain $\ZZ_{\geqslant 0}$.

\begin{lemma}\label{dens} Let $\ttw \in \W$ and $\alpha\in\Sigma$. Then,
\vspace{.2cm}
\begin{itemize}
\item[(a)] $\displaystyle{\lim_{m\to\infty} \frac{\#_\alpha(m)}{m} = d \in (0,1]}$ if, and only if\, $\displaystyle{\lim_{n\to\infty}\frac{p_\alpha(n)}{n} = \frac{1}{d}}$,
\vspace{.2cm}
\item[(b)] $\displaystyle{\lim_{m\to\infty} \frac{\#_\alpha(m)}{m} = 0}$
if, and only if\, $\displaystyle{\lim_{n\to\infty}\frac{p_\alpha(n)}{n} = \infty}$.
\end{itemize}
\end{lemma}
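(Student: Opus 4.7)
The plan is to exploit the near-inverse relationship between $\#_\alpha$ and $p_\alpha$ recorded just before the lemma: $\#_\alpha\circ p_\alpha=\mathrm{Id}$ together with the sandwich $p_\alpha(\#_\alpha(m))\leqslant m<p_\alpha(\#_\alpha(m)+1)$. Two structural facts will be used silently throughout: $p_\alpha(n)\to\infty$ because $p_\alpha$ is strictly increasing on $\NN$, and $\#_\alpha(m)\to\infty$ because $\alpha$ occurs infinitely often in $\ttw\in\W$.

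For (a), the forward direction is a clean substitution. Given $\#_\alpha(m)/m\to d\in(0,1]$, specialize $m=p_\alpha(n)$; since $p_\alpha(n)\to\infty$, the limit is preserved, and $\#_\alpha(p_\alpha(n))=n$ gives $n/p_\alpha(n)\to d$. As $d>0$, inversion yields $p_\alpha(n)/n\to 1/d$. For the converse, suppose $p_\alpha(n)/n\to 1/d$. Writing $k_m:=\#_\alpha(m)$ and dividing the sandwich by $m$, I get
\[
\frac{p_\alpha(k_m)}{k_m}\cdot\frac{k_m}{m}\leqslant 1<\frac{p_\alpha(k_m+1)}{k_m+1}\cdot\frac{k_m+1}{m}.
\]
Because $k_m\to\infty$, both $p_\alpha(k_m)/k_m$ and $p_\alpha(k_m+1)/(k_m+1)$ tend to $1/d$ as subsequences of a convergent sequence, and a squeeze then forces $k_m/m\to d$.

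For (b), the forward direction is again a substitution: from $\#_\alpha(m)/m\to 0$, set $m=p_\alpha(n)$ to obtain $n/p_\alpha(n)\to 0$, equivalently $p_\alpha(n)/n\to\infty$. For the converse, the left half of the sandwich gives
\[
\frac{\#_\alpha(m)}{m}\leqslant\frac{\#_\alpha(m)}{p_\alpha(\#_\alpha(m))}=\frac{k_m}{p_\alpha(k_m)}\longrightarrow 0,
\]
so $\#_\alpha(m)/m\to 0$.

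No genuine obstacle is expected; the proof is essentially the standard inverse-function principle for non-decreasing sequences. The only points requiring a moment of care are verifying that $p_\alpha(n)\to\infty$ and $k_m=\#_\alpha(m)\to\infty$ (so that the hypotheses can be applied along these subsequences), and, in (a), using $d>0$ to legitimately invert the limit; both are immediate from the setup.
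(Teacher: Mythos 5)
Your proof is correct and follows essentially the same route as the paper's: the forward direction by substituting $m=p_\alpha(n)$ and using $\#_\alpha(p_\alpha(n))=n$, and the converse by squeezing with the sandwich $p_\alpha(\#_\alpha(m))\leqslant m<p_\alpha(\#_\alpha(m)+1)$. The paper phrases the squeeze with explicit $\varepsilon$'s rather than subsequential limits, but the argument is the same.
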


\begin{proof}
We must show for $d \in [0,1]$ that $\lim_{m\to\infty} {\#_\alpha(m)}/{m} = d$ if, and only if, $\lim_{n}{n}/{p_\alpha(n)}=d$.

($\Rightarrow$). Let $\eps >0$, and suppose there exists some $M>0$ such that for any $m \geqslant M$,
\[
d-\eps < \frac{\#_\alpha(m)}{m} < d+\eps \,.
\]
Set $N:=\#_\alpha(M)+1$. For any $n\geqslant N$, $p_\alpha(n) \geqslant p_{\alpha}(N) = p_{\alpha}(c_\alpha(M)+1) > M$, so
\[
d-\eps < \frac{\#_\alpha(p_\alpha(n))}{p_\alpha(n)} < d+\eps,
\] which is precisely
\[
d-\eps < \frac{n}{p_\alpha(n)} < d+\eps.
\]
Therefore, $\lim_n {n}/{p_\alpha(n)} =d$, as desired.

($\Leftarrow$). Let $\eps >0$. Suppose there exists some $N>0$ such that for any $n \geqslant N$,
\[
d-\eps < \frac{n}{p_\alpha(n)} < d+\eps \,.
\]
Set $M:=p_\alpha(N)$. For any $m\geqslant M$, $n:= \#_\alpha(m) \geqslant \#_{\alpha}(M) =\#_{\alpha}(p_\alpha(N)) = N$. Since $p_\alpha\circ \#_\alpha \leqslant \mbox{Id}$,
\begin{align*}
\frac{\#_\alpha(m)} {m}  \leqslant \frac{\#_\alpha(m)} {p_\alpha(\#_\alpha(m))}
 = \frac{n}{p_\alpha(n)} \leqslant d+\eps
\end{align*}
And, as well,
\begin{align*}
    d-\eps < \frac{n+1}{p_\alpha(n+1)} = \frac{\#_\alpha(m) + 1}{p_\alpha(\#_\alpha(m) + 1)} < \frac{\#_\alpha(m) + 1}{m}.
\end{align*} Since $\lim_{n\to\infty}1/m=0$, the result follows.
\end{proof}

The following result follows from the proof of Lemma~\ref{dens}.

\begin{corollary}\label{cor:liminfsup}  Let $\ttw \in \W$ and $\alpha\in\Sigma$. Then
\begin{equation*}
\limsup_{m\to\infty} \frac{\#_\alpha(m)}{m} =\limsup_{n\to\infty} \frac{n}{p_\alpha(n)} \qquad\mbox{and}\qquad\liminf_{m\to\infty} \frac{\#_\alpha(m)}{m} =\liminf_{n\to\infty} \frac{n}{p_\alpha(n)} \,. \qed
\end{equation*}
\end{corollary}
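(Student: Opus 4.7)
The plan is to revisit the $\eps$-arguments in the proof of Lemma~\ref{dens}, observing that each implication there actually yields a one-sided inequality on $\limsup$ (resp.\ $\liminf$) without the full limit being required to exist. The essential tools are the identities $\#_\alpha(p_\alpha(n)) = n$ and $p_\alpha(\#_\alpha(m)) \leqslant m < p_\alpha(\#_\alpha(m)+1)$ recorded just before Lemma~\ref{dens}, plus the divergences $p_\alpha(n) \to \infty$ and $\#_\alpha(m) \to \infty$, both guaranteed because $\ttw \in \W$ contains $\alpha$ infinitely often.

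For the $\limsup$ identity, I would set $L := \limsup_m \#_\alpha(m)/m$ and $M := \limsup_n n/p_\alpha(n)$, and prove the two inequalities separately. To show $M \leqslant L$, fix $\eps > 0$, choose $M_0$ with $\#_\alpha(m)/m \leqslant L + \eps$ for all $m \geqslant M_0$, and substitute $m = p_\alpha(n)$ for $n$ large enough that $p_\alpha(n) \geqslant M_0$; then $\#_\alpha(p_\alpha(n)) = n$ gives $n/p_\alpha(n) \leqslant L + \eps$, so $M \leqslant L + \eps$, and $\eps \to 0$ closes the case. For $L \leqslant M$, choose $N_0$ with $n/p_\alpha(n) \leqslant M + \eps$ for all $n \geqslant N_0$; then for $m$ large enough that $n := \#_\alpha(m) \geqslant N_0$, the inequality $p_\alpha(n) \leqslant m$ gives $\#_\alpha(m)/m = n/m \leqslant n/p_\alpha(n) \leqslant M + \eps$.

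For the $\liminf$ equality, symmetric arguments apply: the direction $\liminf_n n/p_\alpha(n) \leqslant \liminf_m \#_\alpha(m)/m$ again comes from substituting $m = p_\alpha(n)$ to realise each value $n/p_\alpha(n)$ as a term of $\#_\alpha(m)/m$. The reverse direction needs a small adjustment, because only $m < p_\alpha(n+1)$ (not $m < p_\alpha(n)$) is available; writing $n = \#_\alpha(m)$, this yields
\[
\frac{\#_\alpha(m)}{m} \;\geqslant\; \frac{n}{p_\alpha(n+1)} \;=\; \frac{n}{n+1}\cdot\frac{n+1}{p_\alpha(n+1)},
\]
and since $n/(n+1) \to 1$ and $n+1 \to \infty$ as $m \to \infty$, passing to $\liminf$ delivers $\liminf_m \#_\alpha(m)/m \geqslant \liminf_n n/p_\alpha(n)$. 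The only real bookkeeping issue is the zero-density case (where $L$ or $M$ could equal $0$, equivalently $p_\alpha(n)/n \to \infty$), but all the inequalities above are valid uniformly in the extended reals $[0,\infty]$, so no separate treatment is needed. Thus the corollary is essentially a re-reading of the $\eps$-estimates already carried out for Lemma~\ref{dens}, and no new ingredients are required.
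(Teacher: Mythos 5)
Your proposal is correct and is precisely the paper's intended argument: the paper offers no separate proof, stating only that the corollary follows from the $\eps$-estimates in the proof of Lemma~\ref{dens}, and re-reading those estimates one inequality at a time is exactly what you do. One sign slip in the $\liminf$ paragraph should be fixed: substituting $m=p_\alpha(n)$ exhibits $(n/p_\alpha(n))_n$ as a subsequence of $(\#_\alpha(m)/m)_m$ and therefore gives $\liminf_{n} n/p_\alpha(n) \geqslant \liminf_{m} \#_\alpha(m)/m$, not ``$\leqslant$'' as written --- as literally stated, your two displayed $\liminf$ inequalities are the same one; since your second argument (via $m < p_\alpha(\#_\alpha(m)+1)$ and $n/(n+1)\to 1$) correctly supplies the genuinely opposite inequality $\liminf_m \#_\alpha(m)/m \geqslant \liminf_n n/p_\alpha(n)$, the two halves do combine to give equality once the label is corrected.
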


\medskip

For the remainder of this section, we focus on results concerning the \emph{frequency} of the letters in a binary word $\ttw$. That is, for $\alpha\in\Sigma$, the limit $${\rm Freq}(\alpha)={\rm Freq}_\alpha(\ttw) := \lim_{m\to\infty} \frac{\#_\alpha(m)}{m}.$$ Our discussion will involve possible existence as well as consequences if the frequency exists value exists. Note that frequency is often called \emph{natural density}, especially in number theory. The following result, follows from Lemma \ref{dens}, and is also well-known---essentially folklore.

\begin{theorem}\label{t1} Let $\ttw \in \W$ and $d\in[0,1]$. The following are equivalent.
\begin{itemize}
  \item[(i)] ${\rm Freq}(\ttb)$ exists and is $d$.
  \item[(ii)] ${\rm Freq}(\tta)$ exists and is $1-d$.
  \item[(iii)] $\lim_{n\to\infty} {p_\ttb(n)}/{n}=\frac{1}{d}$.
  \item[(iv)] $\lim_{n\to\infty} {p_\tta(n)}/{n}=\frac{1}{1-d}$.\qed
\end{itemize}
\end{theorem}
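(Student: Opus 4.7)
The plan is to reduce the theorem to Lemma \ref{dens} together with one elementary counting identity, so the whole statement becomes essentially a repackaging of what has already been done.

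First, I would establish (i) $\Leftrightarrow$ (ii) directly. For every $m \in \NN$, each of the first $m$ bits of $\ttw$ is either an $\tta$ or a $\ttb$, so $\#_\tta(m) + \#_\ttb(m) = m$. Dividing by $m$ shows that the ratios $\#_\tta(m)/m$ and $\#_\ttb(m)/m$ sum to $1$; hence one converges to $d$ precisely when the other converges to $1 - d$. That immediately yields (i) $\Leftrightarrow$ (ii).

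Next, I would obtain (i) $\Leftrightarrow$ (iii) by applying Lemma \ref{dens} with $\alpha = \ttb$. For $d \in (0, 1]$, part (a) of that lemma states exactly that $\lim_m \#_\ttb(m)/m = d$ if and only if $\lim_n p_\ttb(n)/n = 1/d$. The only care needed is the boundary case $d = 0$: with the natural convention $1/0 = \infty$, statement (iii) becomes $\lim_n p_\ttb(n)/n = \infty$, which is precisely part (b) of Lemma \ref{dens}. The equivalence (ii) $\Leftrightarrow$ (iv) follows by the same argument applied to $\alpha = \tta$, with the boundary case now occurring at $d = 1$, again handled by part (b).

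Finally, chaining (iii) $\Leftrightarrow$ (i) $\Leftrightarrow$ (ii) $\Leftrightarrow$ (iv) closes the circle of equivalences, finishing the proof. There is no real obstacle; the only thing to watch is the uniform treatment of the endpoint cases $d \in \{0, 1\}$, where one must switch between parts (a) and (b) of Lemma \ref{dens} and interpret the reciprocal as $\infty$ consistently throughout.
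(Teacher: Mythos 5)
Your proposal is correct and matches the paper's intended argument: the paper explicitly omits the proof, remarking only that the theorem ``follows from Lemma \ref{dens}'' and is essentially folklore, and your reduction via the identity $\#_\tta(m)+\#_\ttb(m)=m$ together with parts (a) and (b) of Lemma \ref{dens} (the latter handling the endpoint cases $d\in\{0,1\}$ with the convention $1/0=\infty$) is exactly that route, spelled out carefully.
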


\begin{corollary}\label{cor:asymp} Let $\ttw$ be any word such that ${\rm Freq}(\ttb)=d \in [0,1]$. Then,
\begin{equation*}
\lim_{n\to\infty} \frac{r(n)}{n}=\frac{1}{d}-\frac{1}{1-d}=\frac{1-2d}{d(1-d)} \,.\qed
\end{equation*}
\end{corollary}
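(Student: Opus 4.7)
The plan is to reduce the statement directly to Theorem~\ref{t1} by splitting $r(n)/n$ into its two natural pieces. Since $r(n)=p_\ttb(n)-p_\tta(n)$ by definition, we have
\[
\frac{r(n)}{n} \;=\; \frac{p_\ttb(n)}{n} \;-\; \frac{p_\tta(n)}{n}
\]
for every $n \in \NN$. Because the corollary is in the setting $\ttw \in \W$ (so both $\tta$ and $\ttb$ occur infinitely often in $\ttw$), the value $d={\rm Freq}(\ttb)$ must lie in $(0,1)$; in particular $d \neq 0$ and $1-d \neq 0$, so both reciprocals $1/d$ and $1/(1-d)$ are finite real numbers.

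Next I would invoke Theorem~\ref{t1}: assumption (i) gives (iii), namely $\lim_{n\to\infty} p_\ttb(n)/n = 1/d$, and it also gives (iv), namely $\lim_{n\to\infty} p_\tta(n)/n = 1/(1-d)$, via the equivalence with (ii) (${\rm Freq}(\tta)=1-d$). Both limits exist and are finite, so by the standard algebra of limits the difference of the two ratios displayed above converges and we obtain
\[
\lim_{n\to\infty}\frac{r(n)}{n} \;=\; \frac{1}{d} - \frac{1}{1-d} \;=\; \frac{(1-d)-d}{d(1-d)} \;=\; \frac{1-2d}{d(1-d)},
\]
which is the claimed identity.

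There is essentially no obstacle: the result is a one-line consequence of Theorem~\ref{t1}, combined with the elementary fact that the limit of a difference is the difference of the limits when both exist and are finite. The only subtlety worth flagging in the write-up is the implicit hypothesis $\ttw\in\W$, which rules out the degenerate cases $d=0$ and $d=1$ and thus guarantees that the two reciprocals appearing on the right-hand side are well defined.
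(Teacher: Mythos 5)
Your main argument is exactly the paper's intended one: the corollary carries a \verb|\qed| and no proof precisely because it is the one-line deduction $r(n)/n = p_\ttb(n)/n - p_\tta(n)/n$ combined with parts (iii) and (iv) of Theorem~\ref{t1}. For $d\in(0,1)$ your write-up is complete and correct.

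However, your justification for discarding the endpoints is wrong: membership in $\W$ does \emph{not} force $d\in(0,1)$. A word can contain infinitely many $\ttb$'s and still have ${\rm Freq}(\ttb)=0$ --- for instance the word whose $\ttb$'s sit exactly at the positions $2^k$, $k\geqslant 0$, lies in $\W$ but has $\#_\ttb(m)/m\to 0$. The statement explicitly allows $d\in[0,1]$, so the cases $d=0$ and $d=1$ are not vacuous and must be addressed rather than ruled out. They are handled the same way, just read in the extended reals: if $d=0$ then Theorem~\ref{t1}(iii) (equivalently Lemma~\ref{dens}(b)) gives $p_\ttb(n)/n\to\infty$ while $p_\tta(n)/n\to 1$, so $r(n)/n\to+\infty$, matching the convention $\frac{1}{0}-\frac{1}{1}=+\infty$; the case $d=1$ is symmetric and gives $-\infty$. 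This is consistent with the later proposition in the same section, which explicitly treats $\lim r(n)/n$ as an element of $[-\infty,\infty]$. So the fix is small, but the reason you gave for skipping those cases is not a valid one.
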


\medskip

In particular, recalling that in a symbolic substitution the frequencies of the letters exist and are proportional to the right Perron-Frobenius eigenvector, the following result holds.

\begin{lemma}\label{sub:asympt} Let $\varrho : \Sigma \to \Sigma^*$ be a primitive substitution and $[u\ 1]^T$
be a right Perron--Frobenius eigenvector for the substitution matrix $M_\varrho$. Let $\ttw \in \W$ be a fixed point of $\varrho$; that is, $\varrho(\ttw)=\ttw$. Then, the following limits exist
\begin{align}
  {\rm Freq}(\tta) &=\frac{u}{u+1},\ {\rm Freq}(\ttb) =\frac{1}{u+1}\ \label{eqx}\\
  \lim_{n\to\infty} \frac{p_\tta(n)}{n}&= 1+\frac{1}{u},\ \
  \lim_{n\to\infty} \frac{p_\ttb(n)}{n}=u+1,\ \ \mbox{and}\ \
  \lim_{n\to\infty} \frac{r(n)}{n}=u-\frac{1}{u}=\frac{u^2-1}{u}. \nonumber
\end{align}
Moreover, all the above limits belong to $\QQ(\lambda_{PF})$, where $\lambda_{PF}$ is the Perron--Frobenius eigenvalue of the substitution.
\end{lemma}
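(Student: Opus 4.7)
The plan is to combine the classical Perron--Frobenius description of letter frequencies for primitive substitutions with the position-versus-frequency dictionary already established in Theorem~\ref{t1} and Corollary~\ref{cor:asymp}. First I would invoke the standard fact (e.g., Queff\'elec, or Baake--Grimm~\cite{TAO}): if $\varrho$ is primitive and $\ttw$ is a fixed point, then the letter frequencies ${\rm Freq}(\tta)$ and ${\rm Freq}(\ttb)$ exist and the vector $[{\rm Freq}(\tta),{\rm Freq}(\ttb)]^T$ is the (unique) right Perron--Frobenius eigenvector of $M_\varrho$ normalized so that its entries sum to $1$. Applied to our normalization $[u\ 1]^T$, this immediately yields
\[
  {\rm Freq}(\tta)=\frac{u}{u+1},\qquad {\rm Freq}(\ttb)=\frac{1}{u+1},
\]
which proves the first line.

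Next I would feed $d=1/(u+1)$ into Theorem~\ref{t1}. Part (iii) gives $\lim_n p_\ttb(n)/n=1/d=u+1$ and part (iv) gives $\lim_n p_\tta(n)/n=1/(1-d)=(u+1)/u=1+1/u$. Corollary~\ref{cor:asymp} then produces
\[
  \lim_{n\to\infty}\frac{r(n)}{n}=\frac{1-2d}{d(1-d)}=u-\frac{1}{u}=\frac{u^2-1}{u},
\]
(which one can also read off from linearity of limits applied to $r=p_\ttb-p_\tta$). This handles all three limiting ratios.

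For the algebraic claim, the substitution matrix $M_\varrho$ has non-negative integer entries, so $\lambda_{PF}$ is a root of its characteristic polynomial, hence an algebraic integer. Solving the $2\times 2$ linear system $M_\varrho[u\ 1]^T=\lambda_{PF}[u\ 1]^T$ expresses $u$ as a rational function of the integer entries of $M_\varrho$ and of $\lambda_{PF}$, so $u\in\QQ(\lambda_{PF})$. Since the quantities $u/(u+1)$, $1/(u+1)$, $1+1/u$, $u+1$, and $u-1/u$ are rational functions of $u$ (with nonzero denominators, as $u>0$), they all lie in $\QQ(\lambda_{PF})$.

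The only nontrivial ingredient is the opening appeal to the Perron--Frobenius theorem for primitive substitutions, and I would either cite it directly or sketch the standard argument: iterating the substitution, $\#_\alpha(\varrho^n(\tta))/|\varrho^n(\tta)|$ is the ratio of the $\alpha$-row of $M_\varrho^n[1\ 0]^T$ to its column sum, which converges to the corresponding entry of the normalized right PF eigenvector by primitivity; density in prefixes of the fixed point then transfers this to ${\rm Freq}(\alpha)$. Everything else is routine substitution into results already in hand.
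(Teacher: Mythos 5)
Your proposal is correct and follows essentially the same route as the paper: cite the Perron--Frobenius description of letter frequencies for primitive substitutions to get \eqref{eqx}, deduce the three limiting ratios from Theorem~\ref{t1} and Corollary~\ref{cor:asymp}, and obtain the algebraicity claim from $u\in\QQ(\lambda_{PF})$ via the eigenvector equation. The paper's proof is simply a terser version of yours; your added detail (the sketch of the frequency convergence and the explicit remark that the denominators $u$, $u+1$ are nonzero since $u>0$) is a harmless elaboration, not a different argument.
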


\begin{proof}
The equations in \eqref{eqx} follow from the fact that the frequencies exist and are proportional to $[u\ 1]^T$. The existence and values of the remaining limits follow immediately from the previous results.

Let $M_\varrho \in \mathcal{M}_2(\ZZ)$ be the substitution matrix, $\lambda_{PF}$ its Perron--Frobenius eigenvalue. Then $0 \neq u \in \QQ(\lambda_{PF})$.
\end{proof}

\begin{proposition}
Let $\ttw \in \mathcal W$, and suppose that $\lim_{n\to\infty} {r(n)}/{n} =: r \in [-\infty, \infty]$; that is, $r$ takes a value in the extended real numbers. Then
\[
{\rm Freq}(\ttb) = \begin{cases}
0, & r = \infty \\
1, & r = -\infty \\
\frac{1}{2}, & r = 0 \\
\frac{2+r - \sqrt{4+r^2}}{2r}, & r\in (-\infty,0)\cup (0,\infty).
\end{cases}
\]
\end{proposition}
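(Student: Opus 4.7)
The plan is to split on whether $r$ is finite and nonzero, zero, or $\pm\infty$; in each case I reduce matters to the existence of $\lim p_\tta(n)/n$ and $\lim p_\ttb(n)/n$, after which Theorem \ref{t1} delivers ${\rm Freq}(\ttb)$ and a short algebraic step matches the stated formula.

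For $r = \pm\infty$, the bound $p_\ttb(n)\geq n-1$ combined with $p_\ttb(n)-p_\tta(n)=r(n)$ makes $p_\ttb(n)/n$ (resp.\ $p_\tta(n)/n$) tend to $\infty$, and Lemma \ref{dens}(b) gives ${\rm Freq}(\ttb)=0$ (resp.\ $1$). For finite $r$, I introduce $a=\liminf p_\tta(n)/n$, $\alpha=\limsup p_\tta(n)/n$, and analogously $b,\beta$ for $p_\ttb$. A subsequence argument using $(p_\ttb-p_\tta)(n)/n\to r$ gives $b=a+r$ and $\beta=\alpha+r$. Next, Corollary \ref{cor:liminfsup} translates each $\liminf/\limsup$ of the $p$'s into a $\limsup/\liminf$ of the corresponding $\#$-ratio, and applying $\#_\tta(m)/m+\#_\ttb(m)/m\equiv 1$ on subsequences realizing the four extreme values of the $\#$-ratios (and observing that the complementary ratio's limit must lie in its own $[\liminf,\limsup]$ interval) produces four one-sided inequalities that match up to give
\[
\frac{1}{a}+\frac{1}{\beta}=1 \qquad\text{and}\qquad \frac{1}{\alpha}+\frac{1}{b}=1.
\]
Substituting $\beta=\alpha+r$ and $b=a+r$ rewrites these as $(a-1)(\alpha+r-1)=1$ and $(\alpha-1)(a+r-1)=1$.

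For $r\neq 0$, subtracting the two identities gives $r(a-\alpha)=0$, forcing $a=\alpha$ and hence establishing the existence of $\lim p_\tta(n)/n$. The remaining identity then becomes the quadratic $a^2+(r-2)a-r=0$, whose unique positive root is $a=(2-r+\sqrt{r^2+4})/2$; then $b=a+r$ and ${\rm Freq}(\ttb)=1/b$, which rationalizes to the stated $(2+r-\sqrt{4+r^2})/(2r)$.

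The main obstacle is the $r=0$ case, where both identities collapse to $(a-1)(\alpha-1)=1$ and do not force $a=\alpha$. I would resolve it by a direct counting argument: for any subsequence with $p_\tta(n_k)/n_k\to A$, the hypothesis forces $p_\ttb(n_k)/n_k\to A$ as well; setting $M_k=\max(p_\tta(n_k),p_\ttb(n_k))$, the first $M_k+1$ positions contain exactly $n_k$ of the letter whose $n_k$-th occurrence is at $M_k$ and between $n_k$ and $n_k+|r(n_k)|$ of the other (the excess being limited to the gap of length $|r(n_k)|$). Equating the total $M_k+1\sim A n_k$ with this sum forces $A=2$; thus $\lim p_\tta(n)/n=2$ and ${\rm Freq}(\ttb)=1/2$, matching the continuous $r\to 0$ limit of the formula.
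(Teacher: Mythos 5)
Your proof is correct, and in the finite-$r$ case it takes a genuinely different route from the paper's. The paper works only with the two quantities $d_-=\liminf_m \#_\ttb(m)/m$ and $d_+=\limsup_m \#_\ttb(m)/m$: it asserts $\limsup_n r(n)/n=\limsup_n p_\ttb(n)/n-\liminf_n p_\tta(n)/n$ (and the $\liminf$ counterpart), converts both into the same quadratic $rx^2-(2+r)x+1=0$ satisfied by $d_-$ and by $d_+$, and concludes $d_-=d_+$ because that quadratic has exactly one root in $[0,1]$. You instead track all four extremes $a,\alpha,b,\beta$ of $p_\tta(n)/n$ and $p_\ttb(n)/n$, extract the exact shift relations $b=a+r$ and $\beta=\alpha+r$ from the existence of $\lim_n r(n)/n$, and pair them with the two exact complementary identities $1/a+1/\beta=1$ and $1/\alpha+1/b=1$ coming from $\#_\tta(m)+\#_\ttb(m)=m$ together with Corollary~\ref{cor:liminfsup}; a one-line subtraction then forces $a=\alpha$ when $r\neq 0$. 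This buys something real: in general $\limsup_n(x_n-y_n)\leqslant \limsup_n x_n-\liminf_n y_n$ only, and given $b=a+r$, $\beta=\alpha+r$ the paper's asserted equality is \emph{equivalent} to $a=\alpha$, i.e., to the very existence statement being proved, so your decomposition makes that step fully transparent. The price is that at $r=0$ your identities degenerate to $(a-1)(\alpha-1)=1$ and you need the separate counting argument, which does work ($2n_k\leqslant M_k+1\leqslant 2n_k+|r(n_k)|$ pins down $M_k/n_k\to 2$, hence $p_\tta(n)/n\to 2$ and ${\rm Freq}(\ttb)=1/2$ by Theorem~\ref{t1}), whereas the paper's quadratic simply becomes linear there. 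Two small polish items: for $r<0$ both roots of $a^2+(r-2)a-r=0$ are positive, so you should select the root by the constraint $a\geqslant 1$ (which holds since $p_\tta(n)\geqslant n-1$) rather than by positivity; and before subtracting the two product identities you should rule out $\alpha=\infty$ for $r\neq 0$ (if $\alpha=\infty$, the identities in reciprocal form give $a=1$ and $b=1=a+r$, forcing $r=0$).
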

\begin{proof}
First, suppose $r=\infty$. Since ${r(n)}/{n} = {p_{\ttb}(n)}/{n} - {p_{\tta}(n)}/{n}$ is the difference of two positive sequences then we must have $\lim_{n\to\infty} \frac{p_{\ttb}(n)}{n} = \infty$. Thus, by Theorem \ref{t1}, ${\rm Freq}(\ttb) = 0$. The case of $r=-\infty$ follows similarly.

Suppose now, that $r\in (-\infty,\infty)$. Then, using Corollary \ref{cor:liminfsup},
\begin{multline*}
r = \limsup_n \frac{r(n)}{n}  = \limsup_{n\to\infty} \frac{p_{\ttb}(n)}{n} - \liminf_{n\to\infty} \frac{p_{\tta}(n)}{n} = \frac{1}{\displaystyle{\liminf_{m\to\infty} \frac{\#_{\ttb}(m)}{m}}} - \frac{1}{\displaystyle{\limsup_{m\to\infty} \frac{\#_{\tta}(m)}{m}}}
\\  = \frac{1}{\displaystyle{\liminf_{m\to\infty} \frac{\#_{\ttb}(m)}{m}}} - \frac{1}{\displaystyle{1 - \liminf_{m\to\infty} \frac{\#_{\ttb}(m)}{m}}} = \frac{\displaystyle{1 - 2\liminf_{m\to\infty} \frac{\#_{\ttb}(m)}{m}}}{\displaystyle{\liminf_{m\to\infty} \frac{\#_{\ttb}(m)}{m}\left(1 - \liminf_{m\to\infty} \frac{\#_{\ttb}(m)}{m}\right)}}\,.
\end{multline*}
Set $d_- := \liminf_m \frac{\#_{\ttb}(m)}{m}\in [0,1]$ to make the analysis cleaner. Hence,
\[
r=\frac{1-2d_-}{d_-(1-d_-)} \,.
\]
When $r=0$, we thus have $d_-=\frac{1}{2}$, while when $r \neq 0$, we obtain $rd_-^2-(2+r)d_-+1=0$ so that
\[
d_- =\frac{2+r \pm \sqrt{(2+r)^2-4r}}{2r}= \frac{2+r \pm \sqrt{4+r^2}}{2r} \,.
\]
Note that when $r \neq 0$, the quadratic function $f(x)=rx^2-(2+r)x+1$ satisfies $f(0)=1>0$ and $f(1)=-1<0$, and therefore, has exactly one root $d_- \in [0,1]$, which uniquely identifies $d_-$.

Repeating the same argument with $d_+ := \limsup_{m\to\infty} \frac{\#_{\ttb}(m)}{m} \in [0,1]$ yields
\[
r = \liminf_n \frac{r(n)}{n} = \frac{1-2d_+}{d_+(1-d_+)}\,.
\]
By uniqueness, $d_- = d_+$, so $d = {\rm Freq}(\ttb)$ exists. As noted above, if $r=0$, then $d=\frac{1}{2}$, and when $r\neq 0$, $d$ is equal to the single value satisfying
\[
d = \frac{2+r \pm \sqrt{4+r^2}}{2r}\in[0,1].
\]
If $r>0$, $\sqrt{4 + r^2} >r$ so that $\frac{2+r +\sqrt{4+r^2}}{2r}>1$, so that $d=\frac{2+r -\sqrt{4+r^2}}{2r}$. On the other hand, if $r<0$, then $-r = |r| < 2 + \sqrt{4 + r^2}$ and so
\[
\frac{2+r + \sqrt{4+r^2}}{2r} < 0
\]
Thus, again, $d=\frac{2+r -\sqrt{4+r^2}}{2r}$.
\end{proof}

\begin{theorem} If one of the limits
\[
r:= \lim_n \frac{r(n)}{n},\quad
p:= \lim_n \frac{p_\ttb(n)}{n},\quad\mbox{or}\quad
q:=\lim_n \frac{p_\tta(n)}{n}
\]
exist, then they all exist and $\frac{1}{p}+\frac{1}{q}=1$.
Moreover, on the extended reals,
\[
p= \frac{2+r + \sqrt{4+r^2}}{2} \quad \textrm{and} \quad q = \frac{2-r + \sqrt{4+r^2}}{2}
\]
\end{theorem}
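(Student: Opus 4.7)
My plan is to reduce the theorem to the preceding proposition together with Theorem~\ref{t1} (and its underlying tool, Lemma~\ref{dens}), by routing every implication through the letter frequency $d := {\rm Freq}(\ttb)$. Since each of the four quantities $r$, $p$, $q$, and $d$ is determined by the others through explicit formulas, it suffices to show that the existence of any one of $r$, $p$, $q$ is equivalent to the existence of $d$, and then the relation $\frac{1}{p}+\frac{1}{q}=1$ will drop out as an immediate byproduct.

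First I would assume that $r=\lim_{n\to\infty} r(n)/n$ exists in the extended reals and invoke the preceding proposition, which yields $d$ explicitly as a case function of $r$. Theorem~\ref{t1}, read with the natural conventions $1/0=\infty$ and $1/\infty=0$, then gives $p = 1/d$ and $q = 1/(1-d)$ in the extended reals. Conversely, if $p = \lim_{n\to\infty} p_\ttb(n)/n$ is assumed to exist in $(0,\infty]$, then Lemma~\ref{dens} forces $d = 1/p$ to exist, Theorem~\ref{t1} promotes this to the existence of $q = 1/(1-d)$, and finally $r = p - q$ follows by passing to the limit in the identity $r(n)/n = p_\ttb(n)/n - p_\tta(n)/n$. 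The case in which $q$ is assumed to exist is symmetric. The relation $\frac{1}{p} + \frac{1}{q} = d + (1-d) = 1$ is then automatic.

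For the closed forms, I would substitute the value $d = (2+r-\sqrt{4+r^2})/(2r)$ supplied by the preceding proposition into $p = 1/d$ and rationalize the resulting fraction, using the identity $(2+r)^2 - (4+r^2) = 4r$ to collapse the denominator and arrive at $p = (2+r+\sqrt{4+r^2})/2$; the formula for $q$ then follows at once either from $q = p - r$ (which simply restates $r = p - q$) or from the symmetric substitution into $q = 1/(1-d)$.

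The main obstacle I anticipate is purely bookkeeping rather than substantive: I need to verify that the stated closed-form expressions are interpreted consistently in the extended reals at the boundary values $r \in \{0, \pm\infty\}$, and that in each of these cases they agree with the direct values of $p$ and $q$ obtained from the corresponding value of $d$ in the preceding proposition (namely $d = 1/2, 0, 1$). Beyond this extended-reals bookkeeping, no new idea is required, since the whole argument is an assembly of the preceding proposition, Theorem~\ref{t1}, and Lemma~\ref{dens}.
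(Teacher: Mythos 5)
Your proposal is correct and follows essentially the same route as the paper: existence is routed through the frequency $d={\rm Freq}(\ttb)$ via the preceding proposition, the relation $\frac1p+\frac1q={\rm Freq}(\ttb)+{\rm Freq}(\tta)=1$ is immediate, and the closed form for $p$ is obtained by rationalizing $1/d$ using $(2+r)^2-(4+r^2)=4r$, with $q=p-r$. If anything, you are slightly more careful than the paper on the converse existence directions (when $p$ or $q$ is assumed to exist, one does need Lemma~\ref{dens}/Theorem~\ref{t1} to recover $d$, which the paper compresses into ``the previous proposition proves the simultaneous existence'').
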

\begin{proof}
The previous proposition proves the simultaneous existence of these limits.
Hence, we have $r=p-q$ and $\frac{1}{p}+\frac{1}{q}={\rm Freq}(\ttb) + {\rm Freq}(\tta)=1$. Thus, $r=p-\frac{p}{p-1}$, so that  $p^2-(2+r)p+r =0$, from which we obtain
\[
p= \frac{2+r \pm \sqrt{4+r^2}}{2}\,.
\]
By the previous proposition, when $r\in (-\infty,0)\cup(0,\infty),$
\[
p = \frac{1}{d} = \frac{2r}{2+r - \sqrt{4+r^2}} = \frac{2r(2+r + \sqrt{4+r^2})}{(2+r)^2 - (4+r^2)} = \frac{2+r + \sqrt{4+r^2}}{2}\,.
\]
Observe that this formula also holds true for $r\in\{0,\pm \infty\}$, and that $q = p-r = \frac{2-r + \sqrt{4+r^2}}{2}$.
\end{proof}

We finish this section by considering what happens to the asymptotics of the (relative) position functions under a binary substitution. This is addressed by the following result on the frequency of letters of a fixed point of a binary substitution.

\begin{theorem}
Let $\varrho : \Sigma \rightarrow \Sigma^*$ be a substitution with  substitution matrix $M_\varrho\in M_2(\mathbb Z)$ such that $\tta$ and $\ttb$ appear in $\varrho(\tta\ttb)$. If $\ttw \in \mathcal W$ is any word such that ${\rm Freq}_\tta(\ttw)$ exists, then
\[
\begin{bmatrix}
{\rm Freq}_\tta(\varrho(\ttw)) \\ {\rm Freq}_\ttb(\varrho(\ttw))
\end{bmatrix}
\ = \ \frac{1}{|\varrho(\tta)|\cdot{\rm Freq}_\tta(\ttw) + |\varrho(\ttb)|\cdot{\rm Freq}_\ttb(\ttw)}\cdot M_\varrho\cdot\begin{bmatrix}
{\rm Freq}_\tta( \ttw) \\ {\rm Freq}_\ttb( \ttw)
\end{bmatrix}\,.
\]
Moreover, if $M_\varrho$ is invertible and ${\rm Freq}_\tta(\varrho(\ttw))$ exists,
\[
\begin{bmatrix} {\rm Freq}_\tta(\ttw) \\ {\rm Freq}_\ttb(\ttw) \end{bmatrix} \ = \ \left\langle (M_\varrho^{-1})^*\begin{bmatrix} 1 \\ 1\end{bmatrix}, \begin{bmatrix} {\rm Freq}_\tta(\varrho(\ttw)) \\ {\rm Freq}_\ttb(\varrho(\ttw))\end{bmatrix}\right\rangle^{-1}\cdot M_\varrho^{-1} \cdot\begin{bmatrix} {\rm Freq}_\tta(\varrho(\ttw)) \\ {\rm Freq}_\ttb(\varrho(\ttw))\end{bmatrix}\,.
\]
\end{theorem}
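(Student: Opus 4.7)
My plan is to express everything in terms of counting functions and pass to the limit, then solve the resulting matrix equation.

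First, by Theorem \ref{t1}, the existence of ${\rm Freq}_\tta(\ttw)$ automatically implies the existence of ${\rm Freq}_\ttb(\ttw)$ with the two summing to $1$, so $v_\ttw := \bigl({\rm Freq}_\tta(\ttw), {\rm Freq}_\ttb(\ttw)\bigr)^T$ is well-defined. Writing $M_\varrho$ with $(\alpha,\beta)$-entry equal to $|\varrho(\beta)|_\alpha$, the column sums of $M_\varrho$ are $|\varrho(\tta)|$ and $|\varrho(\ttb)|$, so $\mathbf{1}^T M_\varrho v_\ttw = |\varrho(\tta)| \cdot {\rm Freq}_\tta(\ttw) + |\varrho(\ttb)| \cdot {\rm Freq}_\ttb(\ttw)$ is exactly the scalar normalizer in the stated formula.

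For the main identity, set $L_n := |\varrho(w_0 w_1 \cdots w_{n-1})|$. The prefix of $\varrho(\ttw)$ of length $L_n$ is precisely the image $\varrho(w_0 \cdots w_{n-1})$, so counting lengths and counting $\tta$'s in that image gives
\[
L_n = |\varrho(\tta)|\,\#_{\tta,\ttw}(n) + |\varrho(\ttb)|\,\#_{\ttb,\ttw}(n), \qquad \#_{\tta,\varrho(\ttw)}(L_n) = |\varrho(\tta)|_\tta\,\#_{\tta,\ttw}(n) + |\varrho(\ttb)|_\tta\,\#_{\ttb,\ttw}(n).
\]
Dividing numerator and denominator by $n$ and letting $n\to\infty$, the assumed existence of both frequencies yields
\[
\lim_{n\to\infty} \frac{\#_{\tta,\varrho(\ttw)}(L_n)}{L_n} = \frac{|\varrho(\tta)|_\tta\,{\rm Freq}_\tta(\ttw) + |\varrho(\ttb)|_\tta\,{\rm Freq}_\ttb(\ttw)}{\mathbf{1}^T M_\varrho v_\ttw},
\]
which is the first component of the claimed matrix identity. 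To promote this subsequential limit to the full limit $\lim_{N\to\infty} \#_{\tta,\varrho(\ttw)}(N)/N$, I would use that $L_{n+1} - L_n = |\varrho(w_n)| \leqslant \max(|\varrho(\tta)|, |\varrho(\ttb)|)$ is bounded while $L_n \to \infty$. For $L_n \leqslant N < L_{n+1}$ a routine sandwich
\[
\frac{\#_{\tta,\varrho(\ttw)}(L_n)}{L_{n+1}} \leqslant \frac{\#_{\tta,\varrho(\ttw)}(N)}{N} \leqslant \frac{\#_{\tta,\varrho(\ttw)}(L_{n+1})}{L_n}
\]
together with $L_n/L_{n+1} \to 1$ gives the result. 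Repeating verbatim with $\ttb$ in place of $\tta$ produces the second component, completing the matrix identity.

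For the ``moreover'' part, I would simply solve the established equation $v_{\varrho(\ttw)} = (\mathbf{1}^T M_\varrho v_\ttw)^{-1} M_\varrho v_\ttw$. Applying $M_\varrho^{-1}$ yields $M_\varrho^{-1} v_{\varrho(\ttw)} = (\mathbf{1}^T M_\varrho v_\ttw)^{-1} v_\ttw$, and then applying $\mathbf{1}^T$ and using $\mathbf{1}^T v_\ttw = {\rm Freq}_\tta(\ttw) + {\rm Freq}_\ttb(\ttw) = 1$ gives
\[
\mathbf{1}^T M_\varrho^{-1} v_{\varrho(\ttw)} = \frac{1}{\mathbf{1}^T M_\varrho v_\ttw},
\]
so the scalar on the right is recovered as $\langle (M_\varrho^{-1})^* \mathbf{1}, v_{\varrho(\ttw)}\rangle^{-1}$, yielding exactly the stated inversion formula. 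The main obstacle is essentially just the bookkeeping between rows and columns of $M_\varrho$ (letters of the image versus letters being substituted); the sandwich, the normalization, and the linear-algebraic inversion are all routine once the identification $L_n \leftrightarrow n$ is set up correctly.
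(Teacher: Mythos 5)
Your proof of the main identity is essentially the paper's own argument: the paper sets $n_m := |\varrho(\ell_0\cdots\ell_m)|$ (your $L_n$), writes the same two counting relations $n_m = |\varrho(\tta)|\,\#_{\tta,\ttw}(m) + |\varrho(\ttb)|\,\#_{\ttb,\ttw}(m)$ and $\#_{\alpha,\varrho(\ttw)}(n_m) = M_{\alpha\tta}\#_{\tta,\ttw}(m) + M_{\alpha\ttb}\#_{\ttb,\ttw}(m)$, passes to the limit along the subsequence $n_m$, and then uses exactly the same sandwich $\frac{n_m}{n_{m+1}}\cdot\frac{\#(n_m)}{n_m} \leqslant \frac{\#(N)}{N} \leqslant \frac{n_{m+1}}{n_m}\cdot\frac{\#(n_{m+1})}{n_{m+1}}$ together with $n_m/n_{m+1}\to 1$ to upgrade to the full limit. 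Nothing to add there.

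For the ``moreover'' clause you diverge from the paper, and I think you lose some content. You derive the inversion formula by algebraically solving the already-established identity $v_{\varrho(\ttw)} = (\mathbf{1}^T M_\varrho v_\ttw)^{-1} M_\varrho v_\ttw$, which presupposes that ${\rm Freq}_\tta(\ttw)$ exists. But notice that under that standing hypothesis the extra assumption ``${\rm Freq}_\tta(\varrho(\ttw))$ exists'' is redundant --- it follows from the first part. The fact that the authors state it as a hypothesis, and the fact that their proof re-runs the counting argument in reverse (writing $m = \langle (M_\varrho^{-1})^*\mathbf{1}, [\#_{\tta,\varrho(\ttw)}(n_m)\ \#_{\ttb,\varrho(\ttw)}(n_m)]^T\rangle$, deducing $\lim m/n_m$ from the existence of ${\rm Freq}_\tta(\varrho(\ttw))$, and then sandwiching again), shows that the intended statement is the converse direction: the existence of the frequencies of $\varrho(\ttw)$ \emph{implies} the existence of the frequencies of $\ttw$, together with the formula. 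Your purely algebraic inversion cannot deliver that existence claim, so as written it proves a weaker statement. The fix is exactly the paper's: apply $M_\varrho^{-1}$ at the level of the counting functions (not of the limits), obtain $\#_{\tta,\ttw}(m)/m$ as a product of quantities whose limits you control via ${\rm Freq}_\alpha(\varrho(\ttw))$, and conclude. (A minor additional point, which the paper also glosses over: both your argument and theirs need the normalizing scalars to be nonzero.)
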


\begin{proof}
Let $\ttw = \ell_0\ell_1\ell_2\cdots$. For each $m\geqslant 0$ define
\[
n_m : = \ |\rho(\ell_0\cdots \ell_m)|  \ = \ |\rho(\tta)|\cdot\#_{\tta,\ttw}(m) + |\rho(\ttb)|\cdot\#_{\ttb,\ttw}(m) \,.
\]
Hence,
\[
\lim_{m\rightarrow \infty} \frac{n_m}{m} \ = \ |\rho(\tta)|\cdot{\rm Freq}_\tta(\ttw) + |\rho(\ttb)|\cdot{\rm Freq}_\ttb(\ttw)\,.
\]
Now, using the entries of the substitution matrix $M=M_\varrho$, we get
\begin{align*}
\#_{\tta,\varrho(\ttw)}(n_m) & = M_{11}\cdot \#_{\tta,\ttw}(m) + M_{12} \cdot\#_{\ttb,\ttw}(m) \\
\#_{\ttb,\varrho(\ttw)}(n_m) & = M_{21}\cdot \#_{\tta,\ttw}(m) + M_{22}\cdot \#_{\ttb,\ttw}(m)\,.
\end{align*}
Thus,
\begin{align*}
    \lim_{m\rightarrow \infty} \frac{\#_{\tta,\varrho(\ttw)}(n_m)}{n_m} & = \left(\lim_{m\rightarrow \infty} \frac{m}{n_m} \right)\cdot\left(\lim_{m\rightarrow \infty} \frac{M_{11}\cdot \#_{\tta,\ttw}(m) + M_{12}\cdot \#_{\ttb,\ttw}(m)}{m}\right)
    \\ & = \frac{M_{11}\cdot {\rm Freq}_\tta(\ttw) + M_{12} \cdot {\rm Freq}_\ttb(\ttw)}{|\rho(\tta)|\cdot{\rm Freq}_\tta(\ttw) + |\rho(\ttb)|\cdot{\rm Freq}_\ttb(\ttw)}\,.
\end{align*}
Now, for any $n\geqslant n_0$, there exists $m\geqslant 0$ such that $n_m \leqslant n\leqslant n_{m+1}$. Since the counting function is increasing, this implies that
\[
\frac{n_m}{n_{m+1}}\cdot\frac{\#_{\tta,\varrho(\ttw)}(n_m)}{n_m} = \frac{\#_{\tta,\varrho(\ttw)}(n_m)}{n_{m+1}} \leqslant \frac{\#_{\tta,\varrho(\ttw)}(n)}{n} \leqslant \frac{\#_{\tta,\varrho(\ttw)}(n_{m+1})}{n_m} = \frac{n_{m+1}}{n_m}\cdot\frac{\#_{\tta,\varrho(\ttw)}(n_{m+1})}{n_{m+1}}\,.
\]
Finally, using the fact that
\[
\lim_{m\rightarrow \infty} \frac{n_m}{n_{m+1}} = \lim_{m\rightarrow\infty} \frac{n_m}{m}\cdot\frac{m+1}{n_{m+1}}\cdot\frac{m}{m+1} = 1,
\]
the squeeze theorem gives that
\[
{\rm Freq}_\tta(\varrho(\ttw)) = \frac{M_{11} \cdot{\rm Freq}_\tta(\ttw) + M_{12} \cdot{\rm Freq}_\ttb(\ttw)}{|\rho(\tta)|\cdot{\rm Freq}_\tta(\ttw) + |\rho(\ttb)|\cdot{\rm Freq}_\ttb(\ttw)}\,.
\]
The argument for ${\rm Freq}_\ttb(\varrho(\ttw))$ follows similarly.

Suppose now that the substitution matrix $M_\varrho$ is invertible and ${\rm Freq}_{\tta}(\varrho(\ttw))$ exists. Using the same setup as before, let $m\geqslant 0$ and $n_m = |\rho(\ell_0\cdots \ell_m)|$. Now, we turn things around to get
\begin{multline*}
m = \#_{\tta,\ttw}(m) + \#_{\tta,\ttw}(m)  = \left\langle\begin{bmatrix} 1 \\ 1\end{bmatrix}, \begin{bmatrix} \#_{\tta,\ttw}(m) \\ \#_{\ttb,\ttw}(m)\end{bmatrix}\right\rangle
\\ = \left\langle\begin{bmatrix} 1 \\ 1\end{bmatrix}, \ M_\varrho^{-1}\begin{bmatrix} \#_{\tta,\varrho(\ttw)}(n_m) \\ \#_{\ttb,\varrho(\ttw)}(n_m)\end{bmatrix}\right\rangle
 = \left\langle (M_\varrho^{-1})^*\begin{bmatrix} 1 \\ 1\end{bmatrix}, \begin{bmatrix} \#_{\tta,\varrho(\ttw)}(n_m) \\ \#_{\ttb,\varrho(\ttw)}(n_m)\end{bmatrix}\right\rangle\,.
\end{multline*}
Hence,
\[
\lim_{m\rightarrow \infty} \frac{m}{n_m} = \left\langle (M_\varrho^{-1})^*\begin{bmatrix} 1 \\ 1\end{bmatrix}, \begin{bmatrix} {\rm Freq}_\tta(\varrho(\ttw)) \\ {\rm Freq}_\ttb(\varrho(\ttw))\end{bmatrix}\right\rangle\,.
\]
Therefore, following in the same way as above
\begin{equation*}
\begin{bmatrix} {\rm Freq}_\tta(\ttw) \\ {\rm Freq}_\ttb(\ttw) \end{bmatrix} \ = \ \left\langle (M_\varrho^{-1})^*\begin{bmatrix} 1 \\ 1\end{bmatrix}, \begin{bmatrix} {\rm Freq}_\tta(\varrho(\ttw)) \\ {\rm Freq}_\ttb(\varrho(\ttw))\end{bmatrix}\right\rangle^{-1} M_\varrho^{-1} \begin{bmatrix} {\rm Freq}_\tta(\varrho(\ttw)) \\ {\rm Freq}_\ttb(\varrho(\ttw))\end{bmatrix}\,.\qedhere
\end{equation*}
\end{proof}

Of course, once you have the letter frequencies, then you have the relative position asymptotics.

\begin{example} Let $\ttw \in \W$, $k \geqslant 2$ and let ${\phi_k}$, the $k$-cloning substitution. The substitution matrix of the $k$-cloning composition operator is $kI_2$.
Using the above theorem, or Lemma \ref{lem23}, we get that
if one of the limits below exist, then all exist, and we have     ${\rm Freq}_\alpha(\ttw) =     {\rm Freq}_\alpha(\phi_k(\ttw)) $,  for $\alpha\in\Sigma$, and
\begin{align*}
\lim_{n\to\infty} \frac{r_{\ttw}(n)}{n} = &\lim_{n\to\infty} \frac{r_{\phi_k(\ttw)}(n)}{n},\quad
  \lim_{n\to\infty} \frac{p_{\tta,\ttw}(n)}{n}= \lim_{n\to\infty} \frac{p_{\tta,\phi_k(\ttw)}(n)}{n}\, ,\\
  &\mbox{and}\quad
    \lim_{n\to\infty} \frac{p_{\ttb,\ttw}(n)}{n}= \lim_{n\to\infty} \frac{p_{\ttb,\phi_k(\ttw)}(n)}{n}\, .
\end{align*}
\end{example}

\section{The Fibonacci and extended Pisa family of substitutions}\label{sect:fib}

In this section, we look at the extended Pisa family of substitutions, the canonical example of which, is the Fibonacci substitution. To introduce this family, we start this section by focusing on the Fibonacci substitution and emphasize some features which are exclusive to this example.

\begin{definition} We call the substitution,
\[
\varrho_F:\begin{cases}
\tta&\to \tta \ttb \\
\ttb&\to\tta \,,
\end{cases}
\]
the \emph{Fibonacci substitution}, and the resulting one-sided infinite fixed point,
\[
\ttf := \tta \ttb \tta \tta \ttb \tta \ttb \tta \cdots  = \lim_{n\to\infty} \varrho_F^n(\tta)\, ,
\]
the \emph{Fibonacci word}.
\end{definition}

We start with the following result.

\begin{theorem}\label{thm-fib}The Fibonacci word $\ttf$ has the following properties.
\begin{itemize}
  \item[(a)] $\Delta p_\tta$ is the sequence obtained from the Fibonacci word under the coding $(\tta,\ttb)=(1,2)$.
  \item[(b)] $\Delta p_\ttb$ is the sequence obtained from the Fibonacci word under the coding $(\tta,\ttb)=(2,3)$.
  \item[(c)] $r(n)=n$ and $\Delta r=1$. In particular, $\Delta r$ is periodic.
\end{itemize}
\end{theorem}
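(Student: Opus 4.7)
My plan hinges on the self-similarity $\varrho_F(\ttf)=\ttf$. Writing $\ttf=\ell_1\ell_2\ell_3\cdots$, I would first observe that both $\varrho_F(\tta)=\tta\ttb$ and $\varrho_F(\ttb)=\tta$ begin with $\tta$ and contain exactly one $\tta$. Hence each letter $\ell_k$ of $\ttf$ contributes exactly one $\tta$ to $\varrho_F(\ttf)=\ttf$, namely the first letter of $\varrho_F(\ell_k)$, and these contributions appear in order $k=1,2,3,\ldots$. Consequently the $n$-th $\tta$ of $\ttf$ is the leading letter of the block $\varrho_F(\ell_n)$, giving
\[
p_\tta(n)\;=\;|\varrho_F(\ell_1\cdots\ell_{n-1})|.
\]
Part~(a) then falls out by taking first differences: $\Delta p_\tta(n)=|\varrho_F(\ell_n)|$, which equals $2$ if $\ell_n=\tta$ and $1$ if $\ell_n=\ttb$, so $\Delta p_\tta$ is precisely a $\{1,2\}$-valued coding of $\ttf$.

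For (c), I would exploit the companion remark that $\varrho_F(\ttb)=\tta$ contains no $\ttb$, so every $\ttb$ in $\ttf$ must appear as the second letter of some image $\varrho_F(\ell_k)=\tta\ttb$ with $\ell_k=\tta$; moreover each $\tta$-letter produces exactly one such $\ttb$. Thus the $n$-th $\ttb$ in $\ttf$ is produced by the $n$-th $\tta$-letter of $\ttf$, which is $\ell_k$ with $k=p_\tta(n)+1$ (converting the $0$-indexed position of the $n$-th $\tta$ to its $1$-indexed letter index). Since the $\ttb$ sits immediately after the $\tta$ in $\varrho_F(\ell_k)=\tta\ttb$, I obtain
\[
p_\ttb(n)\;=\;p_\tta(p_\tta(n)+1)+1.
\]
Expanding the inner $p_\tta$ using the displayed formula and noting that exactly $n-1$ of the first $p_\tta(n)$ letters of $\ttf$ are $\tta$'s, this collapses to $p_\ttb(n)=p_\tta(n)+n$. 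Hence $r(n)=n$ and $\Delta r\equiv 1$.

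Part (b) is then immediate from differencing once more: $\Delta p_\ttb(n)=\Delta p_\tta(n)+1$, which takes the value $3$ when $\ell_n=\tta$ and $2$ when $\ell_n=\ttb$. No deep obstacle arises here; everything reduces to the fixed-point identity $\varrho_F(\ttf)=\ttf$ together with the simple accounting that each letter of $\ttf$ produces exactly one $\tta$, and produces a $\ttb$ precisely when it is itself an $\tta$. The only place that requires care is keeping $0$-indexed positions in $\ttf$ and $1$-indexed letter indices aligned when writing $k=p_\tta(n)+1$.
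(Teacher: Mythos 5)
Your proof is correct. For part (a) you and the paper do essentially the same thing: decompose $\ttf$ into the level-$1$ supertiles $\varrho_F(\tta)=\tta\ttb$ and $\varrho_F(\ttb)=\tta$, note that each contributes exactly one $\tta$ at its head, and read off $\Delta p_\tta(n)=|\varrho_F(\ell_n)|$. Where you genuinely diverge is in the order and mechanism for (b) and (c). The paper proves (b) first by passing to the \emph{level-$2$} supertiles $\ttA_2=\tta\ttb\tta$ and $\ttB_2=\tta\ttb$, each of which contains exactly one $\ttb$ in its second position, so that $\Delta p_\ttb(n)$ equals the length of the $n$-th level-$2$ supertile; it then gets (c) by subtracting (a) from (b) and checking $r(1)=1$. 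You instead stay at level $1$, observe that the $n$-th $\ttb$ is manufactured by the $n$-th $\tta$-letter of $\ttf$, derive the self-referential identity $p_\ttb(n)=p_\tta\bigl(p_\tta(n)+1\bigr)+1$, and collapse it to $p_\ttb(n)=p_\tta(n)+n$ by counting that exactly $n-1$ of the first $p_\tta(n)$ letters are $\tta$'s; part (b) then falls out as $\Delta p_\ttb=\Delta p_\tta+\Delta r=\Delta p_\tta+1$. I checked your index bookkeeping ($0$-indexed positions versus $1$-indexed letters) and the collapse $|\varrho_F(\ell_1\cdots\ell_{p_\tta(n)})|=2(n-1)+\bigl(p_\tta(n)-(n-1)\bigr)=p_\tta(n)+n-1$; both are right. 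Your route buys an exact closed form for $p_\ttb$ in terms of $p_\tta$ without ever invoking level-$2$ supertiles, and in fact anticipates the mechanism the paper later uses for the whole extended Pisa family in Theorem~\ref{thm:6}; the paper's route is more symmetric between the two letters and makes the coding statement in (b) visible directly rather than as a corollary of (a) and (c).
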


\begin{proof}
\textbf{(a)} Consider the level-$1$ supertiles $\ttA_1:=\varrho_F(\tta)=\tta \ttb$ and $\ttB_1 :=\varrho_F(\ttb)=\tta$. Each such supertile contains exactly one $\tta$, at the beginning. This means that, for all $n \geqslant 1$ the $n$-th $\tta$ is the first letter of the $n$-th level-$1$ supertile.

Now, if the $n$-th letter in the Fiboancci word is $\tta$, then the $n$-th supertile is $\ttA_1=\tta \ttb$. This means that the $n$-th $\tta$ is followed by $\ttb$, and then, since there are no two $\ttb$ in a row, followed by $\tta$. This implies that the distance between the $n$-th and $(n+1)$-th $\tta$ is $2$ whenever the $n$-th letter in the Fibonacci word is $\tta$.

Next, if the $n$-th letter in the Fiboancci word is $\ttb$, then the $n$-th supertile is $\ttB_1=\tta$. Since $\ttB_1\ttB_1$ never appears, this $\ttB_1$ is followed by $\ttA_1=\tta \ttb$. This implies that the distance between the $n$-th and $(n+1)$-th $\tta$ is $1$ when the $n$-th letter in the Fibonacci word is $\ttb$. Therefore
\[
\Delta p_\tta(n) =
\begin{cases}
  2 & \mbox{ if the } n \mbox{th letter in } \ttf \mbox{ is } \tta \\
  1 & \mbox{ if the } n \mbox{th letter in } \ttf \mbox{ is } \ttb.
\end{cases}
\]

\textbf{(b)} Consider the level-$2$ supertiles $\ttA_2 :=\varrho_F^2(\tta)=\tta \ttb \tta$ and $\ttB_2 :=\varrho_F^2(\ttb)=\tta \ttb$. Each such supertile contains exactly one $\ttb$, in the second position. This means that, for all $n \geqslant 1$ the $n$-th $\ttb$ is the second letter of the $n$-th level-$2$ supertile.

Now, if the $n$-th letter in the Fibonacci word is $\tta$, then the $n$-th level two supertile is $\ttA_2=\tta \ttb \tta$. The next supertile starts with $\tta \ttb$ since $\ttA_2=\tta \ttb \tta$ or $\ttB_2=\tta \ttb$. This implies that the distance between the $n$-th and $(n+1)$-th $\ttb$ is $3$ whenever the $n$-th letter in the Fibonacci word is $\tta$.

Next, if the $n$-th letter in the Fiboancci word is $\ttb$, then the $n$-th level-$2$ supertile is $\ttB_2=\tta \ttb$. The next level-$2$ supertile starts with $\tta \ttb$, again since both $\ttA_2$ and $\ttB_2$ start with $\tta \ttb$. This implies that the distance between the $n$-th and $(n+1)$-th $\ttb$ is $2$ whenever the $n$-th letter in the Fibonacci word is $\ttb$. Therefore
\[
\Delta p_\ttb(n) =
\begin{cases}
  3 & \mbox{ if the } n \mbox{th letter in } \ttf \mbox{ is } \tta \\
  2 & \mbox{ if the } n \mbox{th letter in } \ttf \mbox{ is } \ttb.
\end{cases}
\]

\textbf{(c)}
By (a) and (b) we have $\Delta r(n)=1$ for all $n$. By looking at the first $2$ letters in the Fibonacci word $\ttf$, we get that $r(1)=1$. This implies that the Fibonacci word is the only word such that $r(n)=n$.
\end{proof}

Let us note here in passing that Theorem~\ref{thm:6} will give a different proof of the fact that $r_{\ttf}(n)=n$. We prove below, in Theorem~\ref{fib-uni}, that the Fibonacci word is the unique non-periodic word which satisfies properties (a) and (b) in Theorem~\ref{thm-fib}, with $(1,2,2,3)$ replaced by any four numbers.

Let $\tau= \frac{1+\sqrt{5}}{2}$ be the golden ratio. By Lemma \ref{sub:asympt}, the Fibonacci $\ttf$ word satisfies
\[
{\rm Freq}_\ttf(\tta)=\frac{\tau}{\tau+1} \qquad\mbox{and}\qquad {\rm Freq}_\ttf(\ttb)=\frac{1}{\tau+1} \,,
\]
and
\[
\lim_{n\to\infty} \frac{r_\ttf(n)}{n}= \tau+1- \frac{\tau+1}{\tau}= \frac{\tau^2-1}{\tau}=1 \,.
\]
Of course this holds by the previous theorem but it is important to realize that the substitution itself was telling us that $r_{\ttf}(n)$ had to at least be asymptotically $n$.

We can characterize all substitutions for which $\lim_{n\to\infty} {r(n)}/{n}=1$. We require the following preliminary result.

\begin{lemma}\label{lem2} Let $M \in M_2(\ZZ)$. Then,  $[\tau\ 1]^T$ is a right eigenvector for $M$ if, and only if, there exists $m, n \in \ZZ$ such that
\[
M= \begin{bmatrix} m+n & m \\ m & n\end{bmatrix} = m\begin{bmatrix}
 1 & 1 \\
 1 & 0
\end{bmatrix} + n \begin{bmatrix}
    1 & 0 \\
    0& 1
\end{bmatrix}.
\]
Moreover, in this case, the eigenvalues of $M$ are $n+m \tau$ and $n+m \tau'$, where $\tau'=(1-\sqrt{5})/2$ is the algebraic conjugate of $\tau$.
\end{lemma}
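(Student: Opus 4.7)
The plan is a direct computation using the fact that $\{1,\tau\}$ is linearly independent over $\QQ$ (equivalently, $\tau$ is irrational) together with the minimal polynomial identity $\tau^2=\tau+1$.

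First I would write a general $M=\begin{bmatrix} a & b \\ c & d\end{bmatrix}\in M_2(\ZZ)$ and compute
\[
M\begin{bmatrix} \tau \\ 1\end{bmatrix} = \begin{bmatrix} a\tau+b \\ c\tau+d\end{bmatrix}.
\]
Assuming $[\tau\ 1]^T$ is an eigenvector, the eigenvalue is forced from the second coordinate to be $\lambda=c\tau+d$, and then the first coordinate yields $a\tau+b=(c\tau+d)\tau$. Using $\tau^2=\tau+1$, the right-hand side equals $(c+d)\tau+c$. Since $\{1,\tau\}$ is $\QQ$-linearly independent and $a,b,c,d\in\ZZ$, matching coefficients gives $a=c+d$ and $b=c$. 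Setting $m:=c$ and $n:=d$ yields exactly
\[
M=\begin{bmatrix} m+n & m \\ m & n\end{bmatrix}=mF+nI, \qquad F:=\begin{bmatrix} 1 & 1 \\ 1 & 0\end{bmatrix}.
\]
The converse direction is a one-line check: direct multiplication shows $(mF+nI)[\tau\ 1]^T=(m\tau+n)[\tau\ 1]^T$, using $\tau^2=\tau+1$ once more.

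For the eigenvalue statement, I would note that the classical Fibonacci matrix $F$ has characteristic polynomial $\lambda^2-\lambda-1$ with roots $\tau,\tau'$, so its eigenvalues are exactly $\tau$ and $\tau'$. Since $M=mF+nI$, the eigenvalues of $M$ are $m\tau+n$ and $m\tau'+n$, with the same eigenvectors as $F$; in particular $[\tau\ 1]^T$ corresponds to the eigenvalue $n+m\tau$.

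There is no real obstacle here — the only subtlety worth flagging is the use of irrationality of $\tau$ to separate the rational and $\tau$-components in the equation $a\tau+b=(c+d)\tau+c$, which is what forces the integrality constraint $a=c+d$, $b=c$ and hence the two-parameter description of $M$.
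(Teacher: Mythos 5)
Your proof is correct, and in the forward direction it takes a cleaner route than the paper's. Both arguments start the same way: the second coordinate of the eigenvector equation forces $\lambda=c\tau+d$. But from there you simply substitute this $\lambda$ into the first coordinate, reduce with $\tau^2=\tau+1$, and match coefficients using the $\QQ$-linear independence of $\{1,\tau\}$ to get $a=c+d$, $b=c$ directly. The paper instead writes $\lambda=k-l\tau'$ from the first equation, invokes the fact that the second eigenvalue of an integer matrix must be the Galois conjugate $m\tau'+n=k-l\tau$, and then extracts the constraints from the trace identity $k+n=\lambda_1+\lambda_2$. Your coefficient-matching argument is more elementary (no appeal to the conjugate eigenvalue or the trace) and less error-prone; the paper's version has the minor virtue of producing the second eigenvalue $n+m\tau'$ as a byproduct of the derivation, whereas you obtain it separately from the tidy decomposition $M=mF+nI$ with $F$ the Fibonacci matrix --- a structural observation the paper states in the surrounding example but does not exploit inside the proof. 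Either way, all three claims (the converse, the classification, and the eigenvalues) are fully justified in your write-up.
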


\begin{proof}
($\Leftarrow$). This direction follows easily from
\[
\begin{bmatrix}m+n & m \\ m & n \end{bmatrix}\begin{bmatrix} \tau  \\ 1 \end{bmatrix}= \begin{bmatrix} m \tau + n \tau +m \\  m \tau + n \end{bmatrix} = (m \tau + n) \begin{bmatrix}
\tau  \\ 1\end{bmatrix}.
\]

($\Rightarrow$). Denote the entries of $M$ as
\[
M =\begin{bmatrix} k & l \\  m & n \end{bmatrix} \in M_2(\ZZ).
\]
The eigenvalue-eigenvector equation $ M\,[\tau\ 1]^T=\lambda\,[\tau\ 1]^T$ is equivalent to the linear system
\begin{align*}
  k \tau + l &= \tau \lambda \\
  m\tau + n &= \lambda \,.
\end{align*}
Therefore, $\lambda=\lambda_1 = m \tau +n = k-l \tau' \in \ZZ[\tau]$. Since $M$ has integer entries, the second eigenvalue is the algebraic conjugate $\lambda_2 = m \tau'+n = k-l\tau$. Then,
\[
k+n= \tr(M)= \lambda_1+\lambda_2= (m \tau +n)+(m \tau'+n)=(k-l \tau')+(k-l \tau) \,.
\]
It follows that $k+n=2n+m=2k-l$ and hence $k=n+l$ and $l=m$. Therefore,
\[
M= \begin{bmatrix} m+n & m \\ m & n\end{bmatrix},
\]
which is the desired result.
\end{proof}

\begin{example} When $m=f_k, n=f_{k-1}$ are consecutive Fibonacci numbers, we have
\[
M=\begin{bmatrix} m+n & m \\ m & n\end{bmatrix}= \begin{bmatrix} f_{k+1} & f_k \\ f_k & f_{k-1}\end{bmatrix}= \begin{bmatrix}
1 & 1 \\
1 & 0
\end{bmatrix}^k \,.
\]
The ring $\ZZ[\tau]$ is a free $\ZZ$-module with basis $\{ \tau, 1\}$. Now, each element $m\tau+n$ induces a $\ZZ$-linear homomorphism $T_{m\tau+n}: \ZZ[\tau]\to \ZZ[\tau]$. The matrix $M$ is exactly the matrix of the linear mapping $T_{m\tau+n}$ with respect to the canonical basis $\{ \tau, 1 \}$. The product of the matrices $M$ and 
\[
M'= \begin{bmatrix} m'+n' & m' \\ m' & n'\end{bmatrix}
\] 
has $[\tau\ 1]^T$ as a right eigenvector, and hence $M\cdot M'$ also be of this form. \exend
\end{example}

\begin{lemma}\label{lem2a1} Let $\varrho : \Sigma \to \Sigma^*$ be any primitive substitution, $\ttw \in \W$ such that $\varrho(\ttw)=\ttw$, and $M_\varrho$ be the substitution matrix of $\varrho$. The following are equivalent.
\begin{itemize}
\item[(i)] $\lim_{n\to\infty} {r(n)}/{n}=1$.
\item[(ii)]  $[\tau\ 1]^T$ is a right Perron--Frobenius eigenvector for $M_\varrho$.
\item[(iii)] There exist $m \in \NN$ and $n \in \ZZ_{\geqslant 0}$ such that
\[
M_\varrho= \begin{bmatrix} m+n & m \\ m & n\end{bmatrix}.
\]
\end{itemize}
\end{lemma}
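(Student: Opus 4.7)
The plan is to exploit the prior results rather than do any new work: Lemma 4.2 already gives a matrix-level characterization of when $[\tau\ 1]^T$ is a right eigenvector, and Lemma \ref{sub:asympt} already tells us that, for a primitive substitution fixing $\ttw$, the limit $\lim_{n\to\infty} r(n)/n$ exists and equals $(u^2-1)/u$, where $[u\ 1]^T$ is the right Perron--Frobenius eigenvector of $M_\varrho$. So the claim reduces to an algebraic identification of the number $u$.

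First I would dispatch the equivalence (ii)$\Leftrightarrow$(iii), which is immediate from Lemma 4.2: that lemma says a $2\times 2$ integer matrix has $[\tau\ 1]^T$ as an eigenvector precisely when it has the form $\begin{bmatrix} m+n & m \\ m & n\end{bmatrix}$, and the only further thing to check is that, for a primitive substitution matrix, this eigenvector actually corresponds to the Perron--Frobenius eigenvalue rather than to the algebraic conjugate. Lemma 4.2 gives the eigenvalues as $n+m\tau$ and $n+m\tau'$; since $\tau>1>\tau'$ and $m\in\NN$, $n\in\ZZ_{\geqslant 0}$ (positivity of $m$ is forced because the substitution must use both letters, and non-negativity of $n$ comes from the matrix having non-negative entries), the eigenvalue attached to $[\tau\ 1]^T$ is $n+m\tau$, which is the larger one, hence the PF eigenvalue.

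Next I would show (ii)$\Rightarrow$(i). If $[\tau\ 1]^T$ is the right PF eigenvector, then $u=\tau$ in Lemma \ref{sub:asympt}, so
\[
\lim_{n\to\infty}\frac{r(n)}{n}=\frac{u^2-1}{u}=\frac{\tau^2-1}{\tau}=\frac{\tau}{\tau}=1,
\]
using the defining relation $\tau^2=\tau+1$.

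For (i)$\Rightarrow$(ii), assume $\lim_{n\to\infty} r(n)/n=1$. By Lemma \ref{sub:asympt}, this limit equals $(u^2-1)/u$ for the (positive) entry $u$ of the right PF eigenvector $[u\ 1]^T$. Setting $(u^2-1)/u=1$ yields $u^2-u-1=0$, whose only positive root is $u=\tau$. Thus $[\tau\ 1]^T$ is the right PF eigenvector of $M_\varrho$, which is (ii). The one subtle point here --- and the only place where care is needed --- is to justify that $u>0$, so that we pick $\tau$ rather than the algebraic conjugate $\tau'<0$; this is guaranteed by the Perron--Frobenius theorem applied to the non-negative primitive matrix $M_\varrho$, which produces a strictly positive eigenvector. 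No step looks like a genuine obstacle; this really is a direct assembly of Lemma 4.2 with the asymptotic formula from Lemma \ref{sub:asympt}.
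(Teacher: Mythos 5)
Your proposal is correct and follows essentially the same route as the paper: both use Lemma~\ref{sub:asympt} to identify $\lim_n r(n)/n$ with $(u^2-1)/u$ and solve $u^2-u-1=0$ with $u>0$ forced by Perron--Frobenius positivity, and both use Lemma~\ref{lem2} for the matrix form, with primitivity forcing $m\neq 0$. Your extra remark that $n+m\tau$ is the larger eigenvalue is a slightly more explicit justification of (iii)$\Rightarrow$(ii) than the paper gives, but the argument is the same in substance.
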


\begin{proof}
Let $[u\ 1]^T$ be a right Perron--Frobenius eigenvector of $M_\varrho$. By Lemma~\ref{sub:asympt}, we have
\[
\lim_{n\to\infty} \frac{r(n)}{n}=\frac{u^2-1}{u}.
\]

Now, (i) holds if, and only if, $\frac{u^2-1}{u} =1$, so $u \in \{ \tau, \tau' \}$. Since the right Perron--Frobenius eigenvector is positive, (i) holds if and only if $u=\tau$. This shows (i)$\Leftrightarrow$(ii).

To see that (ii)$\Rightarrow$(iii), by Lemma~\ref{lem2}, there exists some $m,n \in \ZZ$ such that
\[
M_\varrho= \begin{bmatrix} m+n & m \\ m & n \end{bmatrix}.
\]
Since $M_\varrho$ is a substitution matrix, $m,n \geqslant 0$. It is clear that $M_\varrho$ is primitive if, and only if, $m \neq 0$.

Finally, we note that (iii)$\Rightarrow$(ii) follows from Lemma~\ref{lem2}.
\end{proof}

\begin{example}For each $m \in \NN$ and $n \in \ZZ_{\geqslant 0}$, the substitution
\[
\varrho:\begin{cases}
  \tta \to\tta^{m+n}\ttb^m \\
  \ttb \to \tta^m\ttb^n
\end{cases}
\]
is a primitive substitution with substitution matrix
\[
M_\varrho= \begin{bmatrix} m+n & m \\ m & n \end{bmatrix} \,.
\]
This family of substitutions, and all their permutations for which $\varrho(\tta)$ starts with $\tta$, give all the substitutions with fixed word $\ttw \in \Wa$ satisfying
\begin{equation}\label{eq-r1}
\lim_{n\to\infty} \frac{r(n)}{n} =1 \,,
\end{equation}
while all the permutations of these $\varrho(\tta)$ and $\varrho(\ttb)$ with the property that $\varrho(\ttb)$ starts with $\ttb$ yield all the substitutions with fixed points $\ttw \in \Wb$ satisfying \eqref{eq-r1}.\exend
\end{example}

Now, we discuss the binary words satisfying $r(n)=n+1$, and then the generalization $r(n)=n+j$ for any $j$. Since $r(n)$ does not vanish, we must have $j \geqslant0$. On another hand, for $j \geqslant 0$, the sequence $r(n)=n+j$ is a strictly increasing sequence of positive integers, and hence there exists a unique $\ttw$ such that $r(n)=n+j$.

We start this discussion with the following result that follows from Prop~\ref{p1}:

\begin{theorem}\label{thm:fibplusone}
Let $\tt f$ be the Fibonacci word and $\ttw = D (\tt f)$ be the Fibonacci word with the initial $\tta\ttb$ deleted. Then, $r_\ttw(n)=n+1$ for all $n$.
\qed
\end{theorem}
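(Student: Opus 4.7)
The plan is to simply observe that the Fibonacci word $\ttf$ begins $\tta\ttb\tta\tta\ttb\cdots$, so in particular $\ttf \in \tta\ttb\W$. This places us squarely in the hypothesis of Proposition~\ref{p1}, which asserts that for any $\ttw \in \tta\ttb\W \cup \ttb\tta\W$ one has $r_{D(\ttw)}(n) = r_\ttw(n+1)$ for all $n$. Applying this with $\ttw = \ttf$ gives
\[
r_{D(\ttf)}(n) = r_\ttf(n+1).
\]

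Next, I would invoke Theorem~\ref{thm-fib}(c), which established that $r_\ttf(n) = n$ for every $n \geqslant 1$. Substituting this into the displayed identity yields $r_{D(\ttf)}(n) = n+1$, which is the claim. Since the problem is a direct two-line combination of already-proved results, there is essentially no obstacle; the only thing to verify is that the prefix hypothesis of Proposition~\ref{p1} is satisfied, which is immediate from the explicit expansion of $\ttf$.

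For completeness, one could also note why Proposition~\ref{p1} applies so cleanly here: deleting the initial $\tta$ and the initial $\ttb$ (which are at positions $0$ and $1$ in $\ttf$) uniformly shifts every remaining letter to the left by two positions, so both $p_{\tta,D(\ttf)}(n) = p_{\tta,\ttf}(n+1) - 2$ and $p_{\ttb,D(\ttf)}(n) = p_{\ttb,\ttf}(n+1) - 2$. The shifts cancel upon taking the difference, which is precisely why $r_{D(\ttf)}(n) = r_\ttf(n+1)$. With this observation the proof collapses to one line, and no further calculation is required.
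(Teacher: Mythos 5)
Your proof is correct and is exactly the route the paper takes: the paper presents this theorem as an immediate consequence of Proposition~\ref{p1} (applicable since $\ttf\in\tta\ttb\W$) combined with Theorem~\ref{thm-fib}(c), which gives $r_{D(\ttf)}(n)=r_{\ttf}(n+1)=n+1$. Your closing remark explaining why the two-position shift cancels in the difference is a nice gloss on why Proposition~\ref{p1} holds, but adds nothing beyond the paper's intended argument.
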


The following result is a corollary of Corollary~\ref{cor2}.

\begin{theorem}\label{thm:fibk} Let $\ttw_k = D^k(\ttf)$, the word obtained from the Fibonacci by deleting the first $k$ $\tta$'s and the first $k$ $\ttb$'s. Then, there exists some $N=N(k)$ such that, for all $n>N$, $r_{\ttw_k}(n)=n+k$.\qed
\end{theorem}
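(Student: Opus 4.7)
The plan is to combine two results from earlier in the paper that together give the statement almost for free. By Corollary~\ref{cor2}, for any $\ttw \in \W$ and any $k > 0$, the deletion identity
\[
r_{D^k(\ttw)}(n) = r_{\ttw}(n+k)
\]
holds for all sufficiently large $n$. Taking $\ttw = \ttf$ specializes this to the Fibonacci case, so the only remaining ingredient is the explicit formula for $r_{\ttf}$.

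Next, I would invoke Theorem~\ref{thm-fib}(c), which says exactly $r_{\ttf}(m) = m$ for every $m \in \NN$. Substituting $m = n+k$ gives $r_{\ttf}(n+k) = n+k$. Chaining this with the deletion identity above yields
\[
r_{\ttw_k}(n) = r_{D^k(\ttf)}(n) = r_{\ttf}(n+k) = n+k
\]
for all $n$ larger than the threshold provided by Corollary~\ref{cor2}. That threshold is precisely the $N = N(k)$ asserted in the statement, so the conclusion follows.

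There is essentially no obstacle here beyond tracking the bookkeeping. The only subtlety worth a sentence of comment is why a threshold $N(k)$ is needed at all: the deletion operator $D$ removes the first $\tta$ and the first $\ttb$, and these two letters do not in general occupy the initial positions $0$ and $1$ of the word, so the clean index-shift identity $r_{D(\ttw)}(n) = r_{\ttw}(n+1)$ may fail for the first few values of $n$ (it holds identically only under the prefix hypothesis of Proposition~\ref{p1}). Iterating $k$ times compounds this boundary effect, which is why Corollary~\ref{cor2} only guarantees the equality for $n$ sufficiently large, and this is exactly the source of $N(k)$.
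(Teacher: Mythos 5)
Your proof is correct and matches the paper's own treatment: the paper states this theorem as an immediate corollary of Corollary~\ref{cor2} combined with the identity $r_{\ttf}(n)=n$ from Theorem~\ref{thm-fib}(c), which is exactly the chain of substitutions you carry out. Your closing remark on the source of the threshold $N(k)$ is accurate and a helpful addition, but the argument itself is the same one the paper intends.
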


Now, the word $D^2(\ttf)$ has $r(n)=n+2$ for all $n \geqslant 2$ and $r(1)=2$. Moreover, we show below that $D^k(\ttf)$ is not a fixed point of a primitive substitution for any $k\geqslant 1$.

We now show that $\ttw= D(\ttf)$  can be obtained from Fibonacci via a different process. In particular, we get that two completely unrelated processes applied to the Fibonacci word lead to the same equality.

\begin{theorem}[The Fibonacci Switch]
Let $\varrho: \Sigma \to \Sigma^{*}$ be defined by $\varrho(\tta)=\tta\tta\ttb$ and $\varrho(\ttb)=\tta\ttb$.
Next, split the Fibonacci word $\ttf$ into level-$2$ supertiles, replace each level-$2$ supertile $\ttA_2=\tta\ttb\tta$ by $\varrho(\tta)=\tta\tta\ttb$, and keep each level-$2$ supertile $\ttB_2=\tta\ttb= \varrho(\ttb)$ unchanged. Let
\[
\ttw=\underbrace{\tta\tta\ttb}_{\varrho(\tta)}\underbrace{\tta\ttb}_{\varrho(\ttb)}\underbrace{\tta\tta\ttb}_{\varrho(\tta)} \underbrace{\tta\tta\ttb}_{\varrho(\tta)}\underbrace{\tta\ttb}_{\varrho(\ttb)}\underbrace{\tta\tta\ttb}_{\varrho(\tta)}\underbrace{\tta\ttb}_{\varrho(\ttb)}\underbrace{\tta\tta\ttb}_{\varrho(\tta)}  \cdots
\]
be the word obtained via this Fibonacci switch. Then $\ttw = \varrho(\ttf) = D(\ttf)$.
\end{theorem}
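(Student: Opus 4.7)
The plan is to establish the two equalities $\ttw = \varrho(\ttf)$ and $\varrho(\ttf) = D(\ttf)$ separately. The first is essentially by construction: since $\ttf$ is a fixed point of $\varrho_F$ it is also a fixed point of $\varrho_F^2$, so writing $\ttf = \ell_0 \ell_1 \ell_2 \cdots$ we have the level-$2$ supertile decomposition $\ttf = \varrho_F^2(\ell_0)\varrho_F^2(\ell_1)\varrho_F^2(\ell_2)\cdots$, whose blocks are precisely the $\ttA_2 = \tta\ttb\tta$ and $\ttB_2 = \tta\ttb$ tiles on which the switch operates. By construction the switch replaces each block $\varrho_F^2(\ell_k)$ by $\varrho(\ell_k)$---for $\ell_k = \tta$ this replaces $\ttA_2$ by $\varrho(\tta) = \tta\tta\ttb$, and for $\ell_k = \ttb$ it leaves $\ttB_2 = \varrho(\ttb)$ unchanged---so $\ttw = \varrho(\ell_0)\varrho(\ell_1)\varrho(\ell_2)\cdots = \varrho(\ttf)$.

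For the second equality, the essential tool is the commutation identity
\[
\tta\ttb \cdot \varrho(s) = \varrho_F^2(s) \cdot \tta\ttb \qquad \text{for every } s \in \Sigma^{*}.
\]
I would verify this on each single letter by direct computation, finding $\tta\ttb \cdot \varrho(\tta) = \tta\ttb\tta\tta\ttb = \varrho_F^2(\tta) \cdot \tta\ttb$ and $\tta\ttb \cdot \varrho(\ttb) = \tta\ttb\tta\ttb = \varrho_F^2(\ttb) \cdot \tta\ttb$. A straightforward induction on $|s|$ then extends the identity to all finite words, using
\[
\tta\ttb \cdot \varrho(s\alpha) = (\tta\ttb \cdot \varrho(s)) \cdot \varrho(\alpha) = \varrho_F^2(s) \cdot (\tta\ttb \cdot \varrho(\alpha)) = \varrho_F^2(s\alpha) \cdot \tta\ttb.
\]

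Applied to the length-$k$ prefix $\ell_0 \cdots \ell_{k-1}$ of $\ttf$, the identity forces the infinite words $\tta\ttb \cdot \varrho(\ttf)$ and $\varrho_F^2(\ttf)$ to agree on their first $|\varrho_F^2(\ell_0 \cdots \ell_{k-1})|$ letters, and this length tends to infinity with $k$. Hence $\tta\ttb \cdot \varrho(\ttf) = \varrho_F^2(\ttf)$, and because $\varrho_F(\ttf) = \ttf$ the right side equals $\ttf$, giving $\tta\ttb \cdot \varrho(\ttf) = \ttf$ and therefore $\varrho(\ttf) = D(\ttf)$. The main obstacle is spotting the commutation identity; once one notices that $\varrho$ and $\varrho_F^2$ share the same substitution matrix and that $\varrho(\alpha)$ is simply a rearrangement of $\varrho_F^2(\alpha)$, conjugating by the short word $\tta\ttb$ is the natural thing to try, after which every step reduces to a short calculation.
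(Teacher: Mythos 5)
Your proof is correct, but it takes a genuinely different route from the paper's. The paper never isolates the identity $\ttw=\varrho(\ttf)$ as a separate step; instead it compares the two supertile decompositions letter by letter and computes how each $\tta$ and $\ttb$ moves under the switch, obtaining $p_{\ttb,\ttw}(n)=p_{\ttb,\ttf}(n)+1$ or $p_{\ttb,\ttf}(n)$ and $p_{\tta,\ttw}(n)=p_{\tta,\ttf}(n)$ or $p_{\tta,\ttf}(n)-1$ according to the $n$-th letter of $\ttf$, hence $r_{\ttw}(n)=r_{\ttf}(n)+1=n+1$; it then concludes $\ttw=D(\ttf)$ by combining Theorem~\ref{thm:fibplusone} with the fact that a relative position function determines its word uniquely (Lemma~\ref{lemma2}). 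You instead prove the conjugacy identity $\tta\ttb\,\varrho(s)=\varrho_F^2(s)\,\tta\ttb$ for all finite $s$, pass to the limit along prefixes of $\ttf$ to get $\tta\ttb\,\varrho(\ttf)=\varrho_F^2(\ttf)=\ttf$, and read off $\varrho(\ttf)=D(\ttf)$ since $\ttf$ begins with $\tta\ttb$. Your argument is self-contained combinatorics on words, avoids the relative-position machinery entirely, and is very much in the spirit of the conjugation identities the authors themselves prove for the iccanobiF substitution in Proposition~\ref{prop:rev-fib}; the limit-along-prefixes step is sound because $|\varrho(u)|=|\varrho_F^2(u)|$ for every prefix $u$, so the agreement length tends to infinity. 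What the paper's approach buys, and yours does not directly, is the explicit intermediate fact $r_{\ttw}(n)=n+1$, which is the quantity the paper is actually tracking throughout; conversely, your approach yields the cleaner structural statement that $\varrho$ and $\varrho_F^2$ are conjugate substitutions, which explains \emph{why} the switch works.
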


\begin{proof}
Consider the $n$-th letter in the Fibonacci word $\ttf$. Let us compare the switches in
\begin{align*}
\ttw&=\underbrace{\tta\tta\ttb}_{\varrho(\tta)}\underbrace{\tta\ttb}_{\varrho(\ttb)}\underbrace{\tta\tta\ttb}_{\varrho(\tta)} \underbrace{\tta\tta\ttb}_{\varrho(\tta)}\underbrace{\tta\ttb}_{\varrho(\ttb)}\underbrace{\tta\tta\ttb}_{\varrho(\tta)}\underbrace{\tta\ttb}_{\varrho(\ttb)}\underbrace{\tta\tta\ttb}_{\varrho(\tta)}  \cdots \\
\ttf&= \underbrace{\tta\ttb\tta}_{\ttA_2}\underbrace{\tta\ttb}_{\ttB_2}\underbrace{\tta\ttb\tta}_{\ttA_2} \underbrace{\tta\ttb\tta}_{\ttA_2}\underbrace{\tta\ttb}_{\ttB_2}\underbrace{\tta\ttb\tta}_{\ttA_2}\underbrace{\tta\ttb}_{\ttB_2}\underbrace{\tta\ttb\tta}_{\ttA_2}  \cdots
\end{align*}
Here, the supertiles we are comparing are
\[
\ttA_2=\tta\ttb\tta \leftrightarrow \varrho(\tta) = \tta\tta\ttb \qquad\mbox{and}\qquad
\ttB_2= \tta \ttb  \leftrightarrow \varrho(\ttb) = \tta\ttb.
\]
Each of these four supertiles contains exactly one $\ttb$, and the order of $\ttA_2$ and $\ttB_2$ ($\varrho(\tta)$ and $\varrho(\ttb)$) is the same as the order of letters in the Fibonacci word. Whenever an $\ttA_2$ supertile appears, then the position of the corresponding $\ttb$ in $\varrho(\tta)$ increases by 1. Whenever a $\ttB_2$ supertile appears, then the position of the corresponding $\ttb$ in $\varrho(\ttb)$ stays the same. It follows that
\[
p_{\ttb,\ttw}(n) =
\begin{cases}
p_{\ttb, \ttf}(n)+1    &\mbox{if the } n \mbox{th letter in the Fibonacci word is } \tta    \\
p_{\ttb, \ttf}(n)     &\mbox{if the } n \mbox{th letter in the Fibonacci word is } \ttb   \,.
\end{cases}
\]
Next, recall that
\[
\ttA_2=\ttA_1\ttB_1=(\tta \ttb) \tta    \leftrightarrow \rho(\tta) = \tta\tta\ttb \qquad\mbox{and}\qquad
\ttB_2=\ttA_1 =\tta \ttb    \leftrightarrow \rho(\ttb) = \tta\ttb.
\]
Looking at the position changes of $\tta$s, we see immediately that the $\tta$ in $\ttB_1$ moves one position back in $\ttw$, and the $\tta$ in $\ttA_1$ stays in the same position. Thus,
\[
p_{\tta,\ttw}(n) =
\begin{cases}
p_{\tta, \ttf}(n) &\mbox{if the } n \mbox{th letter in the Fibonacci is } \tta \\
p_{\tta, \ttf}(n) -1 &\mbox{if the } n \mbox{th letter in the Fibonacci is } \ttb   \,.
\end{cases}
\]
This implies that $\ttw$ has relative position function $r_{\ttw}(n)=r_{\ttf}(n)+1 =n+1$. Thus, by uniqueness and Theorem \ref{thm:fibplusone}, $\ttw = \rho(\ttf) = D(\ttf)$.
\end{proof}

We now prove that combining the property of aperiodicity/non-triviality with the generalization of the properties in  Theorem~\ref{thm-fib}(a) and Theorem~\ref{thm-fib}(b) uniquely identifies the Fibonacci word $\ttf$.

\begin{theorem}\label{fib-uni} Let $\ttw \in \Wa$ be a word. Then, both $\Delta p_\tta$ and $\Delta p_\ttb$ are factors of $\ttw$ if and only if either $\ttw=\ttf$, or $\ttw=(\tta\ttb)^\omega$.

In particular, $\ttf$ is the only word in $\Wa$ which is isomorphic to both $\Delta p_\tta$ and $\Delta p_\ttb$.
\end{theorem}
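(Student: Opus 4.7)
The plan is to separate the easy direction from the substantive one. Sufficiency is immediate: for $\ttw = \ttf$, Theorem~\ref{thm-fib}(a),(b) give explicit bijective codings realising $\Delta p_\tta$ and $\Delta p_\ttb$ as factors; for $\ttw = (\tta\ttb)^\omega$ both $\Delta p_\tta$ and $\Delta p_\ttb$ are the constant sequence $2$, which is a factor via the constant coding. For necessity, let $\ttw \in \Wa$ with $\Delta p_\tta(n) = \sigma(w_{n-1})$ and $\Delta p_\ttb(n) = \tau(w_{n-1})$ for codings $\sigma, \tau : \Sigma \to \NN$, and write the run decomposition $\ttw = \tta^{p_1}\ttb^{q_1}\tta^{p_2}\ttb^{q_2}\cdots$. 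I will split into cases based on whether $\tta\tta$ and $\ttb\ttb$ appear as subwords of $\ttw$. If neither does, every $p_i = q_i = 1$ and $\ttw = (\tta\ttb)^\omega$.

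The substantive case is when $\tta\tta$ appears but $\ttb\ttb$ does not. Then $q_i \equiv 1$, so $\Delta p_\tta \in \{1,2\}$ with value $2$ occurring precisely at the partial-sum indices $s_k := p_1 + \cdots + p_k$. The coexistence of $\tta\tta$ and of $\ttb$'s rules out constant $\sigma$; a short computation of $\sigma(\tta), \sigma(\ttb)$ from $w_0 = \tta$ and the first $\ttb$-position (using the identity $p_\ttb(k) = s_k + k - 1$ valid when $q_i \equiv 1$) rules out $\sigma(\tta) = 1, \sigma(\ttb) = 2$. This leaves $\sigma(\tta) = 2, \sigma(\ttb) = 1$, and the factor condition then translates into $s_k - 1 = p_\tta(k)$ for every $k$. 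Using the fact that $p_\tta(k) = (k-1) + (j(k) - 1)$, where $j(k)$ denotes the $\tta$-run index containing the $k$-th $\tta$, this becomes the recursion $s_k = k + j(k) - 1$ with base $s_1 = 1$ and $j(k) = \min\{j : s_j \geq k\}$ depending only on $s_1,\ldots,s_{k-1}$. Hence the recursion has a unique solution; I verify by induction (matching step-by-step against the substitutive action $\varrho_F(\ttf) = \ttf$) that this solution coincides with the $\tta$-run partial-sum sequence of $\ttf$, forcing $\ttw = \ttf$.

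The remaining two cases are ruled out directly. When $\ttb\ttb$ but not $\tta\tta$ appears, the parallel analysis for $\tau$ either forces $\ttb$-run lengths $k_m = m$, so that $\Delta p_\tta = k_n + 1$ is unbounded and therefore not a factor, or else produces a sequence $k_1, k_2, k_3, \ldots = 2, 1, 2, \ldots$ for which the already-fixed value $\sigma(\ttb)$ becomes inconsistent by $n = 3$. When both $\tta\tta$ and $\ttb\ttb$ appear, the two-value constraints on $\Delta p_\tta$ and $\Delta p_\ttb$ force $\ttw = \tta^{p_1}\ttb^Q(\tta^P\ttb^Q)^\omega$ with $Q \geq 2$, and checking the factor condition at the positions where $\Delta p_\tta$ takes its larger value against the period $P+Q$ of the tail produces the unsatisfiable chain $P \geq kQ + 1$ for every $k \geq 1$. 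The main obstacle is the recursion step in the substantive case; aligning its unique solution cleanly with the Fibonacci structure is the delicate part, and I expect the cleanest route is an inductive comparison against the action of $\varrho_F$ on successive $\tta$-blocks.
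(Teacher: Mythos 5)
Your proof is correct, and it follows a genuinely different route from the paper's. The paper argues locally: after showing that a word beginning with $\tta\tta$ would have to be $\tta^\omega$, it splits into six cases according to the first four or five letters of $\ttw$, reads the coding values off those letters, and then either identifies $\ttw$ by a letter-by-letter induction (the $\tta\ttb\tta\tta$ prefix giving $\ttf$, the $\tta\ttb\tta\ttb\tta$ prefix giving $(\tta\ttb)^\omega$) or reaches a contradiction within the first dozen positions. You classify globally by which of $\tta\tta$, $\ttb\ttb$ occur as subwords. This buys a cleaner main case: when $\tta\tta$ occurs and $\ttb\ttb$ does not, your recursion $s_k=k+j(k)-1$ shows that the $\Delta p_\tta$ condition \emph{alone} forces $\ttw=\ttf$, slightly more than the paper's Case 1 establishes; and your two subcases in the ``$\ttb\ttb$ only'' branch reproduce exactly the words the paper constructs in its Cases 3 and 5 (the $q_k=k$ word with unbounded $\Delta p_\tta$, and the $2,1,2,\dots$ word whose $\Delta p_\tta$ begins $3,2,3$), with the $\tta\ttb^3$ prefix killed by non-injectivity of $\tau$. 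The price is your fourth case, where both $\tta\tta$ and $\ttb\ttb$ occur: the paper's prefix analysis never has to confront the eventually periodic family $\tta^{p_1}\ttb^{Q}(\tta^{P}\ttb^{Q})^\omega$, whereas you must; your chain $P\geqslant kQ+1$ (obtained by forcing each $s_{k+1}-1$ past the next $\ttb$-run, after the $k=1$ check rules out coding the large value by $\ttb$) does dispose of it. Two details to make explicit when writing it up: $j(k)=\min\{j: s_j\geqslant k\}$ depends only on $s_1,\dots,s_{k-1}$ precisely because $s_k\geqslant k$ holds automatically; and for the final identification you can skip the induction against $\varrho_F$, since Theorem~\ref{thm-fib}(a) already certifies that the run sequence of $\ttf$ solves the same recursion, so uniqueness of the solution finishes the case.
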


\begin{proof}
Before proceeding, let us summarize our strategy. First, we will show that any word starting with $\tta \tta$ and satisfying the given conditions must consist of $\tta$'s only. Therefore, we can focus on words starting with $\tta \ttb$.

This means that $\Delta p_\tta(1)$ and $\Delta p_\tta(2)$ uniquely identify $k$ and $l$ and $\Delta p_\ttb(1)$ and $\Delta p_\ttb(2)$ uniquely identify $m$ and $n$. To obtain one of these, we need to look at the first few bits of $\ttw$. Finally, knowing one of the pairs $(k,l)$ or $(m,n)$ and that $\ttw \in \tta \ttb \W$ we can reconstruct $\ttw$. Let us proceed along these lines.

Toward a contradiction, assume that $\ttw$ starts with $\tta \tta$. Then, under the coding $\tta \to k$, the sequence $\Delta p_\tta$ starts with $k,k$. Since $\Delta p_\tta(1)=1$, we get that $k=1$ and hence $\Delta p_\tta$ starts with $1,1$.
A short induction proves that for each $n$, $\ttw$ starts with $\tta^n$, and so  $\Delta p_\tta(1)$ starts with $n$ repeated $1$s.
Therefore, $\ttw = \tta^\omega \notin \Wa$, a contradiction.

Now, we look at six cases.

\underline{Case 1:} Suppose $\ttw=\tta \ttb \tta \tta \cdots$. Under the coding $(\tta,\ttb)=(k,l)$, the sequence $\Delta p_\tta$ starts with $k,l,k,k$. Since $\Delta p_\tta(1)=2$ and  $\Delta p_\tta(2)=1$, we get that $(k,l)=(2,1)$. For simplicity, let $\ttw = \ell_0 \ell_1 \ell_2 \cdots$ and $\ttf = f_0 f_1 f_2 \cdots$. We prove by induction that $f_j = \ell_j$ for all $j \in \ZZ_{\geqslant 0}$. By hypothesis, $f_j = \ell_j$ for $0\leqslant j\leqslant 3$. Next, suppose $f_j = \ell_j, 0\leqslant j\leqslant r$. Then, $\Delta p_{\tta,\ttw}(j) = \Delta p_{\tta,\ttf}(j)$, for $0\leqslant j\leqslant r$. As $p_{\tta, \ttw}(1)=0=p_{\tta, \ttf}(1)$, it follows that the positions of the first $r+1$ $\tta$'s in $\ttw$ and $\ttf$ agree. Hence, the bits of $\ttw$ and $\ttf$ are the same up to position $p_{\tta,\ttw}(r+1)=p_{\tta,\ttf}(r+1)$. Now, since $p_{\tta,\ttf}(2)=2$, we trivially get that $p_{\tta, \ttf}(r+1)=p_{\tta,\ttw}(r+1)\geqslant r+1$. Therefore $\ell_{r+1}=f_{r+1}$.

\underline{Case 2:} Suppose that $\ttw=\tta \ttb \tta \ttb \tta\cdots$. Then, the sequence $\Delta p_\tta$ starts with $k,l,k,l,k$ and hence $k=l=2$. This shows that $\Delta p_\tta(j)=2$ for all $j\geqslant1$. Thus $\ttw = (\tta \ttb)^\omega$.

\underline{Case 3:} Suppose $\ttw=\tta \ttb \tta \ttb \ttb\cdots$.  Then, the sequence $\Delta p_\ttb$ starts with $m,n,m,n,n$, which gives $(m,n)=(2,1)$ and hence $\Delta p_\ttb$ starts with $2,1,2,1,1$. Since $p_\ttb(1)=1$, the first six positions of $\ttb$ in $\ttw$ are $1,3,4,6,7,8$ and hence $\ttw = \tta \ttb \tta \ttb \ttb \tta \ttb \ttb \ttb \cdots$, which does not yet cause a problem---we need to go a bit further. Under the coding $\tta \to m=2$ and $\ttb \to n =1$, we get that $\Delta p_\ttb$ starts with $2,1,2,1,1,2,1,1,1$, and hence, the first ten positions of $\ttb$ in $\ttw$ are $1,3,4,6,7,8, 10, 11, 12, 13$, and hence $\ttw = \tta \ttb \tta \ttb \ttb \tta \ttb \ttb \ttb \tta \ttb \ttb \ttb \ttb \cdots$. This implies that $\Delta p_\tta$ starts with $2, 3, 4$, which contradicts the hypothesis that $\Delta p_\tta$ is obtained from $\ttw$ by under a coding $(\tta,\ttb)=(k,l)$.

\underline{Case 4:} Suppose $\ttw=\tta \ttb \ttb \tta \tta\cdots$. Here, $\Delta p_\tta(1)=3$ and $\Delta p_\tta(2)=1$, which gives that $(k,l)=(3,1)$, hence $\Delta p_\tta$ starts with $3,1,1,3,3$. Since $p_\tta(1)=0$, the first six positions of $\tta$ in $\ttw$ are $0,3,4,5,8, 11$, and so $\ttw = \tta \ttb \ttb \tta \tta \tta \ttb \ttb \tta \ttb \ttb \tta \cdots$. This immediately implies that $\Delta p_\ttb$ starts with $1, 4 , 1, 2,$ which contradicts the fact that $\Delta p_\ttb$ is obtained from $\ttw$ by under a relabeling $(\tta,\ttb)=(m,n)$.

\underline{Case 5:} Suppose $\ttw=\tta \ttb \ttb \tta \ttb\cdots$. In this case, $\Delta p_\ttb(1)=1$ and $\Delta p_\ttb(2)=2$, which implies that $(m,n)=(1,2)$, and hence $\Delta p_\ttb$ starts with $1,2,2,1,2$. Since $p_\ttb(1)=1$, the first six positions of $\ttb$ in $\ttw$ are $1,2,4,6,7,9$ and so $\ttw = \tta \ttb \ttb \tta \ttb \tta \ttb \ttb \tta \ttb \cdots$. This immediately implies that $\Delta p_\tta$ starts with $3, 2, 3$, which contradicts the fact that $\Delta p_\tta$ is obtained from $\ttw$ by under a coding $(\tta,\ttb)=(k,l)$.

\underline{Case 6:} Finally, suppose that $\ttw$ starts with $\tta \ttb^3$. This implies that $\Delta p_\ttb$ starts with $1,1$. Therefore, $\Delta p_\ttb$ is obtained from $\ttw$ under the coding $(\tta,\ttb)=(1,1)$, which implies that $\Delta p_\ttb(n)=1$ for all $n$. In particular, $\tta$ appears in $\ttw$ only on the first position, which contradicts $\ttw \in \Wa$.

This completes the proof.
\end{proof}

\begin{remark} For each pair $(k,l)\in \NN \times \NN$ with $k\neq 1$, there exists a unique word $\ttw_{k,l} \in \tta\mathcal{W}$ with the property that $\Delta p_\tta$ is obtained from $\ttw_{k,l}$  under the coding $(\tta,\ttb)=(k,l)$.
\end{remark}

By combining Theorem~\ref{fib-uni} with the reflection operator, we obtain the following corollary.

\begin{corollary} Let $\ttw \in \Wb$ be a word. Then, both $\Delta p_\tta$ and $\Delta p_\ttb$ are factors of $\ttw$ if and only if either $\ttw=\bar{\ttf}$, or $\ttw=(\ttb\tta)^\omega$.

In particular, $\bar{\ttf}$ is the only word in $\Wb$ which is isomorphic to both $\Delta p_\tta$ and $\Delta p_\ttb$.\qed
\end{corollary}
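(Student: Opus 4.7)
The plan is to derive this corollary directly from Theorem~\ref{fib-uni} using the reflection operator $\bar{\cdot}$. The key starting observation is that reflection swaps the position functions: for any $\ttw \in \W$, the $n$-th $\tta$ in $\bar{\ttw}$ sits at the position of the $n$-th $\ttb$ in $\ttw$, so $p_{\tta,\bar\ttw}(n)=p_{\ttb,\ttw}(n)$ and $p_{\ttb,\bar\ttw}(n)=p_{\tta,\ttw}(n)$, and therefore
\[
\Delta p_{\tta,\bar\ttw} \;=\; \Delta p_{\ttb,\ttw} \qquad\text{and}\qquad \Delta p_{\ttb,\bar\ttw} \;=\; \Delta p_{\tta,\ttw}.
\]
Moreover, by Proposition~\ref{prop12}(a), $\ttw \in \Wb$ if and only if $\bar\ttw \in \Wa$.

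Given this, I would establish a coding-swap principle: if $\sigma : \Sigma \to \NN$ is a coding that witnesses $\Delta p_{\alpha,\ttw}$ as a factor of $\ttw$ (meaning $\Delta p_{\alpha,\ttw}(n+1)=\sigma(w_n)$ for all $n\geqslant 0$), then the coding $\sigma'(\beta):=\sigma(\bar\beta)$ witnesses $\Delta p_{\bar\alpha,\bar\ttw}$ as a factor of $\bar\ttw$, since the $n$-th letter of $\bar\ttw$ is $\bar{w_n}$. Hence the property ``both $\Delta p_\tta$ and $\Delta p_\ttb$ are factors of $\ttw$'' is preserved under the reflection $\ttw\mapsto\bar\ttw$. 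Similarly, isomorphism is preserved, since composing a bijection with the bit-swap remains a bijection.

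With this symmetry in hand, the corollary reduces to Theorem~\ref{fib-uni}: if $\ttw\in\Wb$ has both $\Delta p_\tta$ and $\Delta p_\ttb$ as factors, then so does $\bar\ttw\in\Wa$, forcing $\bar\ttw\in\{\ttf,(\tta\ttb)^\omega\}$ and hence $\ttw\in\{\bar\ttf,(\ttb\tta)^\omega\}$. The reverse implication follows by applying the same transfer in the opposite direction, starting from the known factor relations for $\ttf$ and $(\tta\ttb)^\omega$. For the ``in particular'' clause, the same coding-swap argument combined with the corresponding statement in Theorem~\ref{fib-uni} reduces the question to showing that $(\ttb\tta)^\omega$ is not isomorphic to both $\Delta p_\tta$ and $\Delta p_\ttb$, which is immediate since for this periodic word $\Delta p_\tta=\Delta p_\ttb=(2,2,2,\ldots)$ has a one-letter reduced alphabet and therefore admits no bijection from the two-letter alphabet of $(\ttb\tta)^\omega$. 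No serious obstacle is expected; the entire argument is a mechanical transport along $\bar{\cdot}$, and the only thing worth verifying carefully is the coding-swap bookkeeping to make sure the ``factor'' and ``isomorphic'' conditions really do pass through reflection.
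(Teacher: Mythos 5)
Your argument is correct and is exactly the paper's intended route: the paper derives this corollary in one line by combining Theorem~\ref{fib-uni} with the reflection operator, and your coding-swap bookkeeping (using $p_{\tta,\bar\ttw}=p_{\ttb,\ttw}$ and $p_{\ttb,\bar\ttw}=p_{\tta,\ttw}$) just makes that transfer explicit. The extra check that $(\ttb\tta)^\omega$ is not isomorphic to its difference sequences is also right and mirrors the ``in particular'' clause of Theorem~\ref{fib-uni}.
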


Since the Fibonacci word $\ttf$ is isomorphic via reflection to $\bar{\ttf}$, we have the following corollary.

\begin{corollary} The Fibonacci word $\ttf$ is, up to isomorphism, the only word on a two letter alphabet which is isomorphic to the difference sequence of the position function of each letter.\qed
\end{corollary}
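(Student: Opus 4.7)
The plan is to derive this corollary as an immediate combination of Theorem~\ref{fib-uni} with the preceding corollary (its reflected counterpart), together with the observation that $\ttf$ and $\bar\ttf$ are isomorphic via the alphabet bijection $\tta \leftrightarrow \ttb$.

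First, I would reduce to a canonical alphabet. Since the statement is only up to isomorphism, any word on a two-letter alphabet may be relabeled to have alphabet $\{\tta,\ttb\}$. For $\ttw$ to be isomorphic to $\Delta p_\tta$ (and to $\Delta p_\ttb$), those difference sequences must be well-defined and themselves take exactly two distinct values; this forces each of $\tta$ and $\ttb$ to occur infinitely often in $\ttw$, placing $\ttw \in \W = \Wa \cup \Wb$. Applying the reflection operator if necessary (which by Proposition~\ref{prop12}(a) swaps $\Wa$ and $\Wb$ and is itself an isomorphism), I may assume $\ttw \in \Wa$.

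At this point Theorem~\ref{fib-uni} does all the heavy lifting: it asserts that the only $\ttw \in \Wa$ which is isomorphic to both $\Delta p_\tta$ and $\Delta p_\ttb$ is the Fibonacci word $\ttf$ itself. Combined with the fact that $\ttf$ genuinely has this property (by Theorem~\ref{thm-fib}, the codings $(\tta,\ttb) = (1,2)$ and $(\tta,\ttb) = (2,3)$ realize $\ttf$ as both $\Delta p_\tta$ and $\Delta p_\ttb$ under bijective codings), this yields the claim.

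I do not expect any real obstacle here, since the work was done in Theorem~\ref{fib-uni}. The only point worth a brief sanity check is that the property ``$\ttw$ is isomorphic to both $\Delta p_\tta$ and $\Delta p_\ttb$'' is itself invariant under isomorphism of $\ttw$: if $\sigma \colon \Sigma \to \Sigma'$ is an alphabet bijection carrying $\ttw$ to $\ttw'$, then $p_{\sigma(\alpha),\ttw'} = p_{\alpha,\ttw}$ for each letter $\alpha$, so the corresponding difference sequences agree as integer sequences, and isomorphism with $\ttw$ transfers cleanly to isomorphism with $\ttw'$. With this verified, the two-line combination above gives the corollary.
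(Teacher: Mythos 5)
Your proposal is correct and follows essentially the same route as the paper: the corollary is obtained by combining Theorem~\ref{fib-uni} with its reflected counterpart for $\Wb$ and the observation that $\ttf$ and $\bar{\ttf}$ are isomorphic, with the relabeling/isomorphism-invariance points you check being exactly the implicit content of ``up to isomorphism.'' No gaps.
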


Now, we consider a variant of the Fibonacci substitution.

\begin{definition} We call the substitution,
\[
\varrho_F':\begin{cases}
\tta\to \ttb\tta \\
\ttb\to\tta \,,
\end{cases}
\]
the \emph{backwards Fibonacci substitution}, or the \emph{iccanobiF substitution}.
\end{definition}

The substitution $\varrho_F'$ does not have a fixed point, but $(\varrho_F')^2$ has two fixed points, one starting with $\tta$, and one starting with $\ttb$. We will look at the fixed points of $(\varrho_F')^2$ and their relative position functions. To explicitly calculate these fixed points, we relate $\varrho_F'$ to the Fibonacci substitution $\varrho_F$.

By \cite[Remark~4.6]{TAO}, the substitutions $\varrho_F$ and $\varrho_F'$ are conjugate. More precisely, for all words $\ttw\in\Sigma^*$, we have $\varrho_F(\ttw) \tta = \tta \varrho_F'(\ttw)$. Our immediate goal is to show that we can relate $\varrho_F^n$ and $(\varrho_F')^n$ via conjugation relations. Noting that for $n\geqslant 1$, we have $\varrho_F^n(\ttb)= \varrho_F^{n-1}(\tta)$ and $(\varrho_F')^n(\ttb)= (\varrho_F')^{n-1}(\tta)$, it suffices to relate $\varrho_F^n(\tta)$ to $(\varrho_F')^n(\tta)$. We achieve this in the following result.

\begin{proposition}\label{prop:rev-fib} For all $n \geqslant 1$, we have both
\begin{itemize}
\item[(a)] $\tta \ttb \varrho_F^{2n}(\tta)=(\varrho_F')^{2n}(\tta) \ttb \tta$, and
\vspace{.1cm}
\item[(b)] $\ttb \tta \varrho_F^{2n-1}(\tta)=(\varrho_F')^{2n-1}(\tta) \tta \ttb$.
\end{itemize}
\end{proposition}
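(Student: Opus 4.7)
The plan is to carry out an interleaved induction on $n$, using the conjugation relation $\varrho_F(\ttw)\tta=\tta\varrho_F'(\ttw)$ as the single algebraic engine. The idea is that (b) at level $n$ implies (a) at level $n$, and (a) at level $n$ implies (b) at level $n+1$, so (a) and (b) ladder up together from a single base case.

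First, I would verify (b) for $n=1$ directly: since $\varrho_F(\tta)=\tta\ttb$ and $\varrho_F'(\tta)=\ttb\tta$, both sides of $\ttb\tta\,\varrho_F(\tta)=\varrho_F'(\tta)\,\tta\ttb$ equal $\ttb\tta\tta\ttb$.

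For the step (b)$_n\Rightarrow$(a)$_n$: apply the homomorphism $\varrho_F$ to $\ttb\tta\,\varrho_F^{2n-1}(\tta)=(\varrho_F')^{2n-1}(\tta)\,\tta\ttb$ and use $\varrho_F(\tta\ttb)=\tta\ttb\tta$, $\varrho_F(\ttb\tta)=\tta\tta\ttb$ to obtain
\[
\tta\tta\ttb\,\varrho_F^{2n}(\tta)=\varrho_F\!\bigl((\varrho_F')^{2n-1}(\tta)\bigr)\,\tta\ttb\tta.
\]
Now invoke the given conjugation with $\ttw=(\varrho_F')^{2n-1}(\tta)$, which rewrites $\varrho_F((\varrho_F')^{2n-1}(\tta))\,\tta$ as $\tta\,(\varrho_F')^{2n}(\tta)$; the right-hand side becomes $\tta\,(\varrho_F')^{2n}(\tta)\,\ttb\tta$. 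Left-cancelling the shared leading $\tta$ yields (a)$_n$. The step (a)$_n\Rightarrow$(b)$_{n+1}$ is the mirror computation: apply $\varrho_F$ to $\tta\ttb\,\varrho_F^{2n}(\tta)=(\varrho_F')^{2n}(\tta)\,\ttb\tta$, producing
\[
\tta\ttb\tta\,\varrho_F^{2n+1}(\tta)=\varrho_F\!\bigl((\varrho_F')^{2n}(\tta)\bigr)\,\tta\tta\ttb,
\]
then apply the conjugation with $\ttw=(\varrho_F')^{2n}(\tta)$ and cancel a leading $\tta$.

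I do not expect any serious obstacle: the only technicalities are (i) keeping the parities straight between (a) and (b), and (ii) confirming that $\Sigma^*$ is left-cancellative (which is immediate in a free monoid). In effect, the proposition is just the conjugation $\varrho_F(\ttw)\tta=\tta\varrho_F'(\ttw)$ transported from ``one step'' to ``$2n{-}1$ steps'' and ``$2n$ steps,'' with the extra $\tta\ttb$ or $\ttb\tta$ on each side accounting for the image of the seed under one additional application of $\varrho_F$.
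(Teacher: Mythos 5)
Your proof is correct, but it takes a genuinely different route from the paper's. The paper proves (a) and (b) by a joint induction in steps of two: it decomposes $\varrho_F^{2n+2}(\tta)=\varrho_F^{2n}(\tta)\,\varrho_F^{2n-1}(\tta)\,\varrho_F^{2n}(\tta)$ (and likewise for $\varrho_F'$) and pushes the boundary words $\tta\ttb$ and $\ttb\tta$ through this triple product by three successive applications of the inductive hypotheses for (a) and (b) at level $n$; the conjugation identity $\varrho_F(\ttw)\tta=\tta\varrho_F'(\ttw)$, although stated just before the proposition, is never actually invoked in the paper's argument. You instead use that identity as the sole engine: applying $\varrho_F$ to the identity at one level, then substituting $\ttw=(\varrho_F')^{k}(\tta)$ into the conjugation and left-cancelling a leading $\tta$, yields the identity at the next level, so (a) and (b) ladder up in unit steps from the single base case (b) with $n=1$. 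I checked your transition computations ($\varrho_F(\ttb\tta)=\tta\tta\ttb$, $\varrho_F(\tta\ttb)=\tta\ttb\tta$, and the two uses of the conjugation) and they are correct; left cancellation is of course valid in the free monoid $\Sigma^*$. Your argument is shorter, needs only one base case rather than two, and makes transparent that the proposition is nothing more than the one-step conjugacy iterated, whereas the paper's argument is self-contained at the level of supertile decompositions and does not depend on the conjugation relation having been established first.
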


\begin{proof}
We prove both by induction.

{\bf (a)} For $n=1$, we have $\tta \ttb \varrho_F^{2}(\tta)=\tta \ttb \tta \ttb \tta = \left(\varrho_F'\right)^{2}(\tta) \ttb \tta$. Now, suppose the result holds for some $n\geqslant 1$. Then
\begin{align*}
\tta \ttb \varrho_F^{2n+2}(\tta)&=\tta \ttb \varrho_F^{2n}(\tta \ttb \tta ) =  \tta \ttb \varrho_F^{2n}(\tta) \varrho_F^{2n-1}(\tta) \varrho_F^{2n}(\tta) \\
&= (\varrho_F')^{2n}(\tta) \ttb \tta  \varrho_F^{2n-1}(\tta) \varrho_F^{2n}(\tta) = (\varrho_F')^{2n}(\tta)  (\varrho_F')^{2n-1}(\tta) \tta \ttb  \varrho_F^{2n}(\tta) \\
&= (\varrho_F')^{2n}(\tta)  (\varrho_F')^{2n-1}(\tta)  (\varrho_F')^{2n}(\tta)  \ttb \tta = (\varrho_F')^{2n}(\tta \ttb \tta) \ttb \tta = (\varrho_F')^{2n+2}(\tta) \ttb \tta \,.
\end{align*}

{\bf (b)} For $n=1$, we have $\ttb \tta \varrho_F^{1}(\tta)=\ttb \tta \tta \ttb = \left(\varrho_F'\right)^{1}(\tta) \tta \ttb$. Now, suppose the result holds for some $n\geqslant 1$. Then
\begin{align*}
\ttb \tta \varrho_F^{2n+1}(\tta)&=\ttb \tta \varrho_F^{2n-1}(\tta \ttb \tta ) =  \ttb \tta \varrho_F^{2n-1}(\tta) \varrho_F^{2n-2}(\tta) \varrho_F^{2n-1}(\tta) \\
&= (\varrho_F')^{2n-1}(\tta) \tta \ttb  \varrho_F^{2n-2}(\tta) \varrho_F^{2n-1}(\tta) = (\varrho_F')^{2n-1}(\tta)  (\varrho_F')^{2n-2}(\tta) \ttb \tta  \varrho_F^{2n-1}(\tta) \\
&= (\varrho_F')^{2n-1}(\tta)  (\varrho_F')^{2n-2}(\tta)  (\varrho_F')^{2n-1}(\tta)  \tta \ttb = (\varrho_F')^{2n}(\tta \ttb \tta) \ttb \tta = \left(\varrho_F'\right)^{2n+2}(\tta) \tta \ttb \,.\qedhere
\end{align*}
\end{proof}

Similarly, we have the following result.

\begin{proposition} For all $n \geqslant 1$, we have both
\begin{itemize}
\item[(a)] $\tta \ttb \varrho_F^{2n+1}(\ttb)=\left(\varrho_F'\right)^{2n+1}(\ttb) \ttb \tta$, and
\vspace{.1cm}
\item[(b)] $\ttb \tta \varrho_F^{2n}(\ttb)=\left(\varrho_F'\right)^{2n}(\ttb) \tta \ttb$.\qed
\end{itemize}
\end{proposition}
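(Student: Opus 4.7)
The plan is to deduce this proposition directly from Proposition~\ref{prop:rev-fib}, using only the two elementary identities $\varrho_F(\ttb)=\tta$ and $\varrho_F'(\ttb)=\tta$. Because both substitutions collapse $\ttb$ to $\tta$ in a single step, for every $n\geqslant 1$ we have
\[
\varrho_F^{n}(\ttb)=\varrho_F^{n-1}(\tta) \qquad\text{and}\qquad (\varrho_F')^{n}(\ttb)=(\varrho_F')^{n-1}(\tta),
\]
so each of the two claims rewrites cleanly as a statement about iterates applied to $\tta$ rather than to $\ttb$.

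For part (a), setting the exponent to $2n+1$ gives $\varrho_F^{2n+1}(\ttb)=\varrho_F^{2n}(\tta)$ and $(\varrho_F')^{2n+1}(\ttb)=(\varrho_F')^{2n}(\tta)$. Substituting these into the desired equality reduces it to $\tta\ttb\,\varrho_F^{2n}(\tta)=(\varrho_F')^{2n}(\tta)\,\ttb\tta$, which is exactly part (a) of Proposition~\ref{prop:rev-fib}. For part (b), using the exponent $2n$ gives $\varrho_F^{2n}(\ttb)=\varrho_F^{2n-1}(\tta)$ and $(\varrho_F')^{2n}(\ttb)=(\varrho_F')^{2n-1}(\tta)$, and the claim collapses to $\ttb\tta\,\varrho_F^{2n-1}(\tta)=(\varrho_F')^{2n-1}(\tta)\,\tta\ttb$, which is part (b) of Proposition~\ref{prop:rev-fib}.

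The parity matching works out automatically: shifting the exponent down by one swaps even with odd, so the even-exponent statement about $\ttb$ pairs with the odd-exponent statement about $\tta$ and vice versa, and the suffixes $\ttb\tta$ and $\tta\ttb$ line up correctly with the two cases of Proposition~\ref{prop:rev-fib}. There is no real obstacle here and no new induction is required beyond that already carried out in Proposition~\ref{prop:rev-fib}; the only care required is keeping track of which case one is invoking in each part.
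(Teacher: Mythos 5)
Your proof is correct and is essentially the argument the paper intends: the paper states this proposition without proof precisely because, as it notes just before Proposition~\ref{prop:rev-fib}, the identities $\varrho_F^{n}(\ttb)=\varrho_F^{n-1}(\tta)$ and $(\varrho_F')^{n}(\ttb)=(\varrho_F')^{n-1}(\tta)$ reduce the $\ttb$-statements to the $\tta$-statements already proved. Your parity bookkeeping (odd exponent in (a) matching the even case of Proposition~\ref{prop:rev-fib}, and vice versa) is exactly right.
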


As a consequence, we can understand the fixed points of $(\varrho_F')^2$.

\begin{theorem} Let $\varrho_F'$ be the iccanobiF substitution and $\ttf$ be the Fibonacci word. Then, the one-sided fixed points of $(\varrho_F')^{2}$ (starting with $\tta$ or $\ttb$) exist, and satisfy
\[
\lim_{n\to\infty} (\varrho_F')^{2n} (\tta) ={\rm Pre}_{\tta \ttb } (\ttf) \in \Wa\qquad\mbox{and}\qquad
\lim_{n\to\infty} (\varrho_F')^{2n} (\ttb) ={\rm Pre}_{\ttb \tta } (\ttf)  \in \Wb\,.
\]
Moreover, we have both
\[
r_{{\rm Pre}_{\tta \ttb } (\ttf)}(n)= \begin{cases}
    1 & \mbox{ if } n=1  \\
    n-1 & \mbox{ if } n>1
\end{cases}
\qquad
\mbox{and}
\qquad
r_{{\rm Pre}_{\ttb \tta } (\ttf)}(n)=
\begin{cases}
    -1 & \mbox{ if } n=1  \\
    n-1 & \mbox{ if } n>1\,.
\end{cases}
\]
\end{theorem}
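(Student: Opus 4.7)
The plan is to deduce the existence and identity of the fixed points from the conjugacy-type identities already established (Proposition~\ref{prop:rev-fib} and its companion for $\ttb$), and then compute the relative position functions via the balanced-prefix shift lemma, handling $n=1$ separately.

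First, for the existence and identity of the fixed point starting with $\tta$, I will rewrite Proposition~\ref{prop:rev-fib}(a) as
\[
(\varrho_F')^{2n}(\tta)\ttb\tta = \tta\ttb\,\varrho_F^{2n}(\tta),
\]
which, comparing lengths, shows that $(\varrho_F')^{2n}(\tta)$ is precisely the length-$|\varrho_F^{2n}(\tta)|$ prefix of $\tta\ttb\,\varrho_F^{2n}(\tta)$. Since $\varrho_F^{2n}(\tta)\to \ttf$ and the prefix lengths tend to infinity, it follows that $(\varrho_F')^{2n}(\tta)\to \tta\ttb\,\ttf = \mathrm{Pre}_{\tta\ttb}(\ttf)$. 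The analogous argument using the companion proposition (with $\ttb$ in place of $\tta$) gives $(\varrho_F')^{2n}(\ttb)\to \mathrm{Pre}_{\ttb\tta}(\ttf)$. These are then clearly fixed points of $(\varrho_F')^{2}$ by continuity of substitutions on the product topology, and they lie in $\Wa$ and $\Wb$ respectively, since $\ttf\in\Wa$ and both letters still appear infinitely often after prepending.

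Next, for the relative position functions, I will apply the balanced-prefix lemma established earlier in the paper: if $u$ is balanced of length $2k$, then $r_{\mathrm{Pre}_u(\ttw)}(n+k)=r_\ttw(n)$. Both $\tta\ttb$ and $\ttb\tta$ are balanced words of length $2$, so $k=1$. Combined with the fact that $r_\ttf(n)=n$ from Theorem~\ref{thm-fib}(c), this yields $r_{\mathrm{Pre}_{\tta\ttb}(\ttf)}(n+1)=n$ and $r_{\mathrm{Pre}_{\ttb\tta}(\ttf)}(n+1)=n$ for all $n\geqslant 1$, i.e.\ the value $m-1$ at every $m\geqslant 2$, as claimed.

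Finally, the value at $n=1$ cannot be obtained from the balanced-prefix identity and must be computed directly from the definitions. Since $\mathrm{Pre}_{\tta\ttb}(\ttf)=\tta\ttb\tta\ttb\tta\tta\ttb\cdots$, the first $\tta$ sits at position $0$ and the first $\ttb$ at position $1$, giving $r(1)=1$. Similarly $\mathrm{Pre}_{\ttb\tta}(\ttf)=\ttb\tta\tta\ttb\tta\tta\ttb\cdots$ places the first $\ttb$ at position $0$ and the first $\tta$ at position $1$, giving $r(1)=-1$. The main obstacle in this proof is simply bookkeeping: ensuring the length comparison in the first step is handled carefully so that the limit is taken as a prefix (and not only as an abstract equality of words), and remembering that the balanced-prefix lemma shifts the index by $k=1$, which is exactly why the $n=1$ value must be verified by hand.
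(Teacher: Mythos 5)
Your proposal is correct and follows essentially the same route as the paper: the identity $(\varrho_F')^{2n}(\tta)\ttb\tta=\tta\ttb\,\varrho_F^{2n}(\tta)$ from Proposition~\ref{prop:rev-fib} (and its companion for $\ttb$) gives the limits, and the relative position formulas then come from the balanced-prefix shift together with $r_\ttf(n)=n$. You merely spell out the length/prefix bookkeeping and the $n=1$ computation that the paper compresses into ``the remaining claims follow immediately.''
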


\begin{proof}
By Proposition~\ref{prop:rev-fib} we have $(\varrho_F')^{2n}(\tta) \ttb \tta = \tta \ttb \varrho_F^{2n}(\tta)$. Letting $n \to \infty$, we get $\tta \ttb \ttf = {\rm Pre}_{\tta \ttb}(\ttf)$.
Similarly, $(\varrho_F')^{2n-1}(\tta) \tta \ttb = \ttb \tta \varrho_F^{2n-1}(\tta)$, which gives $\ttb \tta \ttf = {\rm Pre}_{\ttb \tta}(\ttf)$. The remaining claims follow immediately.
\end{proof}

Let us note here the following result which shows that we can get from the Fibonacci substitution words $\ttw$ for which, we eventually have $r_\ttw(n)=n+j$ for all values of $j$.

\begin{theorem} Let $w$ be any balanced word of length $2j$ and let $\ttw_j = {\rm Pre}_{w} (\ttf)$, the word obtained from the Fibonacci by the prefix $w$. Then, for all $n>j$ we have $r_{\ttw_j}(n)=n-j$.

Moreover, if $w=(\tta \ttb)^j$, then $r_{\ttw_j}(1)=r_{\ttw_j}(2)=\cdots =r_{\ttw_j}(j)=1$.\qed
\end{theorem}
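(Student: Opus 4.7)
The proof will proceed by reducing the claim to two ingredients already established in the paper: the balanced-prefix lemma (which states that attaching a balanced word $u$ of length $2k$ to $\ttw\in\W$ satisfies $r_{{\rm Pre}_u(\ttw)}(n+k)=r_\ttw(n)$) and the Fibonacci identity $r_\ttf(n)=n$ from Theorem~\ref{thm-fib}(c). Neither of these requires rework, so the argument is short.

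For the first assertion, I apply the balanced-prefix lemma with $u=w$ and $k=j$, taking $\ttw=\ttf$. This gives
\[
r_{\ttw_j}(n+j)=r_{{\rm Pre}_w(\ttf)}(n+j)=r_\ttf(n)
\]
for every $n\in\NN$. Substituting the Fibonacci identity $r_\ttf(n)=n$ yields $r_{\ttw_j}(n+j)=n$, and replacing $n+j$ by $m$ (so that $n=m-j$) gives $r_{\ttw_j}(m)=m-j$ for every $m>j$, which is precisely the first claim.

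For the second assertion, I compute the relevant positions directly from the prefix. When $w=(\tta\ttb)^j$, the word $\ttw_j$ begins with $\tta\ttb\tta\ttb\cdots\tta\ttb$ (of total length $2j$) followed by the Fibonacci word $\ttf$. Hence for $1\leqslant k\leqslant j$ the $k$-th $\tta$ in $\ttw_j$ sits at position $2(k-1)$ and the $k$-th $\ttb$ sits at position $2(k-1)+1$, so that
\[
r_{\ttw_j}(k)=p_{\ttb,\ttw_j}(k)-p_{\tta,\ttw_j}(k)=(2k-1)-2(k-1)=1,
\]
as required.

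There is no substantive obstacle: the heart of the theorem is the invariance property of balanced prefixes, which has already been proved, and the Fibonacci relative position function has already been identified. The only point that warrants verification is to ensure that the index shift is applied in the correct direction (the prefix lemma translates $n$ in $\ttf$ to $n+k$ in the prefixed word, not the other way around), and that in the second part the prefix is read off bit-by-bit rather than via the general lemma, since for those indices one is still inside the prefix and not in the Fibonacci tail.
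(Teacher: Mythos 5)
Your proposal is correct and follows exactly the route the paper intends: the theorem is stated with a \qed because it is an immediate consequence of the balanced-prefix lemma ($r_{{\rm Pre}_u(\ttw)}(n+k)=r_\ttw(n)$) together with $r_\ttf(n)=n$ from Theorem~\ref{thm-fib}(c), which is precisely your argument. Your direct positional computation for the $(\tta\ttb)^j$ prefix, including the observation that the first $j$ occurrences of each letter lie inside the prefix, is also the right (and only) way to handle the second claim.
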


We now turn to the one-sided fixed points of the family of substitutions $\sigma$ on $\Sigma$ with the property that $\sigma(\tta)$ and $\sigma(\ttb)$ contain one $\ttb$ in total. We will add two further restrictions on this family. First, if the $\ttb$ appears in $\sigma(\ttb)$, then $\tta \to \sigma(\tta)$ generates the 1-sided word containing only $\tta$, which is not interesting. Therefore, we assume that the single $\ttb$ appears in $\sigma(\tta)$. In this case, the only way of getting a fixed point for $\sigma$, and not only for one of its larger powers, is if $\sigma(\tta)$ starts with $\tta$---this is our second restriction. Therefore, we consider the following family of substitutions.

\begin{definition}The \emph{extended Pisa family of substitutions} is given by $\sigma_{k,l,m}$ with $k, m \geqslant 1$ and $l \geqslant 0$, where
\[
\sigma_{k,l,m}:
\begin{cases}\tta\to\tta^k\ttb\tta^l  \\
\ttb\to\tta^m \,.
\end{cases}
\]
\end{definition}

Note that, in the case $l=0, m=1$, the substitutions $\sigma_{k,0,1}$ are called the \emph{noble means substitutions} in \cite[Rem.~4.7]{TAO} and \cite{EMM}, and $\sigma_{1,0,2}$ is the \emph{period-doubling substitution} \cite[Sect.~4.5.1]{TAO}.

For general $k$, $l$ and $m$, the substitution matrix is
\[
M=M_{\sigma_{k,0,1}}= \begin{bmatrix}
k+l & 1 \\
m   &0
\end{bmatrix} \,.
\]
Since $M^2>0$, $M$ is a primitive matrix. Its two distinct eigenvalues are
\[
\lambda_{\pm}= \frac{(k+l) \pm  \sqrt{(k+l)^2+4m}}{2} \,,
\]
which only depend on the two parameters $k+l$ and $m$. Since the product of eigenvalues is $\det(M)=-m<2$, we have $\lambda_2 <0$. Thus, the substitution is Pisot (see \cite{Sing} for definition and properties) if, and only if, $m < (k+l)+1$. When $m=(k+l)+1$, the eigenvalues are $\lambda_1=(k+l)+1$ and $\lambda_2=-1$.

We can now prove the following result, which significantly extends the results on the Fibonacci substitution---$(k,l,m)=(1,0,1)$.

\begin{theorem}\label{thm:6} Let $k, m \geqslant 1$, $l \geqslant 0$, and $\ttw$ be the one-sided fixed point of $\sigma_{k,l,m}$. Then,
\[
p_{\ttb}(n)= m\cdot p_\tta(n) + (k+l+1-m)  n + m-l-1\,.
\]
In particular,
\begin{equation}\label{eq-r}
r(n)= (m-1) \cdot p_\tta(n) + (k+l+1-m)  n + m-l-1.
\end{equation}
Moreover, $\ttw$ is the unique word satisfying \eqref{eq-r}.
\end{theorem}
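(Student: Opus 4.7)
The plan is first to derive the formula for $p_\ttb(n)$ via the supertile decomposition of $\ttw=\sigma_{k,l,m}(\ttw)$, and then to use essentially the same decomposition for the uniqueness clause. Writing $\ttw=\ell_0\ell_1\ell_2\cdots$, we have $\ttw=\sigma(\ttw)=\sigma(\ell_0)\sigma(\ell_1)\cdots$. Because $\sigma(\ttb)=\tta^m$ contains no $\ttb$ while $\sigma(\tta)=\tta^k\ttb\tta^l$ contains exactly one $\ttb$ at internal offset $k$, the $n$-th $\ttb$ of $\sigma(\ttw)$ sits inside the supertile originating from the $n$-th $\tta$ of $\ttw$, at position $p_\tta(n)$. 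The $p_\tta(n)$ preceding supertiles split into $n-1$ copies of $\sigma(\tta)$ (each of length $k+l+1$) and $p_\tta(n)-n+1$ copies of $\sigma(\ttb)$ (each of length $m$). Summing these lengths and adding the internal offset $k$ gives
\[
p_\ttb(n) \,=\, (n-1)(k+l+1)+(p_\tta(n)-n+1)\,m+k,
\]
which expands routinely to $m\,p_\tta(n)+(k+l+1-m)\,n+m-l-1$. Subtracting $p_\tta(n)$ from both sides produces \eqref{eq-r}.

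For uniqueness, observe that the derivation above invoked the fixed-point property only at the very last moment, to replace $p_{\ttb,\sigma(\ttw)}(n)$ with $p_\ttb(n)$. The identical counting argument, carried out with an arbitrary $\ttw'\in\W$ in place of $\ttw$, yields
\[
p_{\ttb,\sigma_{k,l,m}(\ttw')}(n) \,=\, m\,p_{\tta,\ttw'}(n)+(k+l+1-m)\,n+m-l-1.
\]
Now suppose $\ttw'\in\W$ satisfies \eqref{eq-r}. Setting $n=1$ gives $r_{\ttw'}(1)=(m-1)\,p_{\tta,\ttw'}(1)+k\geqslant k\geqslant 1$, which forces $\ttw'\in\Wa$; rearranging \eqref{eq-r} for general $n$ produces $p_{\ttb,\ttw'}(n)=m\,p_{\tta,\ttw'}(n)+(k+l+1-m)\,n+m-l-1$, and by the displayed identity this equals $p_{\ttb,\sigma(\ttw')}(n)$. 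Hence $\ttw'$ and $\sigma_{k,l,m}(\ttw')$ share the same $\ttb$-position function. Both lie in $\W$ (for $\sigma(\ttw')$, note that $\sigma(\tta)$ contains both letters and $\ttw'$ has infinitely many $\tta$'s), and a word in $\W$ is determined by the set of positions of its $\ttb$'s (as noted in Section~\ref{sec:prelim} when the sets $A$ and $B$ are introduced), so $\ttw'=\sigma_{k,l,m}(\ttw')$. Since $\sigma_{k,l,m}(\tta)$ begins with $\tta$, the prefixes $\sigma_{k,l,m}^n(\tta)$ form an ascending tower of prefixes with lengths tending to infinity, so $\sigma_{k,l,m}$ has the unique fixed point $\ttw=\lim_n\sigma_{k,l,m}^n(\tta)$ in $\Wa$; therefore $\ttw'=\ttw$.

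The only conceptual obstacle is recognising that \eqref{eq-r} is, in effect, the fixed-point equation $\ttw=\sigma_{k,l,m}(\ttw)$ in disguise; once this is seen, the whole theorem reduces to the supertile-counting identity in the first display.
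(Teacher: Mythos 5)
Your derivation of the formula for $p_\ttb(n)$ is the same supertile-counting argument the paper uses, so that part needs no comment. Where you genuinely diverge is the uniqueness clause. The paper argues via forced reconstruction: from \eqref{eq-r} one gets $r_{\ttw'}(n)>0$ for every $n$, so the $(n+1)$-th $\tta$ of any word satisfying \eqref{eq-r} must occupy the least unoccupied position and the $(n+1)$-th $\ttb$ is then pinned down by the formula; hence at most one word can satisfy \eqref{eq-r}, and the fixed point does. You instead observe that the counting identity
\[
p_{\ttb,\sigma_{k,l,m}(\ttw')}(n)=m\,p_{\tta,\ttw'}(n)+(k+l+1-m)n+m-l-1
\]
holds for \emph{every} $\ttw'\in\W$, so that \eqref{eq-r} says precisely that $\ttw'$ and $\sigma_{k,l,m}(\ttw')$ have the same $\ttb$-position set, i.e.\ $\ttw'=\sigma_{k,l,m}(\ttw')$; uniqueness of the fixed point (every fixed point must begin with $\tta$, hence with $\sigma_{k,l,m}^n(\tta)$ for all $n$) then finishes the argument. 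This is correct — the needed side conditions ($r_{\ttw'}(1)\geqslant k>0$ to land in $\Wa$, and $\sigma_{k,l,m}(\ttw')\in\W$ so the position functions make sense — are all checked. Your route is the more conceptual one: it explains \emph{why} \eqref{eq-r} characterizes $\ttw$, namely that it is the fixed-point equation rewritten in terms of position functions, and it reuses the first half of the proof rather than appealing to the reconstruction machinery. The paper's route is slightly more self-contained at this point in the text (it needs only positivity of $r$ and the greedy placement), but yours generalizes more transparently to any substitution in which exactly one image letter carries the $\ttb$.
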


\begin{proof} We split $\ttw$ into level-$1$ supertiles $\ttA=\tta^k\ttb\tta^l$ and $\ttB=\tta^m$, so that
\[
\ttw =\underbrace{\tta \cdots }_{X_0=\ttA} \underbrace{\tta \cdots }_{X_1}  \underbrace{\tta \cdots }_{X_2} \ \cdots,
\]
where $X_i\in\{\ttA,\ttB\}$ for each $i\geqslant 0$. To calculate the position $p_\ttb(n)$ of the $n$-th $\ttb$, we note that each $\ttA$ contains exactly one $\ttb$ and each $\ttB$ contains no $\ttb$. This means that the $n$-th $\ttb$ appears in the $n$-th $\ttA$ supertile. For simplicity, denote $j:=p_\tta(n)$, so that the $n$-th $\ttb$ appears inside $X_j$. Now, there are $j$ supertiles before $X_j$. Since $X_j$ is the $n$-th $\ttA$ supertile, there are exactly $n-1$ $\ttA$ supertiles before $X_j$. The remaining $j-n+1$ supertiles are $\ttB$ supertiles. Since each $\ttA$ supertile contains $k+l+1$ letters and each $\ttB$ supertile contains $m$ letters, there are
\[
(n-1) (k+l+1)+(j-n+1) m
\]
letters before the supertile $X_j$. Further, the single $\ttb$ is in position $k+1$ inside $X_j$. Remembering that our index count starts at $0$, for $\ttw=\ell_0\ell_1 \cdots$, we have
\begin{align*}
p_\ttb(n)&= (n-1) (k+l+1)+(j-n+1) m+ k   \\
&=n (k+l+1)+(p_\tta(n)-n+1) m+ k -(k+l+1)\\
&= m \cdot p_\tta(n) + (k+l+1-m)  n + m-l-1 \,,
\end{align*}
which proves the first claim as well as \eqref{eq-r}.

Lastly, we prove uniqueness of $\ttw$ satisfying \eqref{eq-r}. Note that this is not a trivial fact, since, in general, $r(n)$ uniquely determines $\ttw$ only when it is a specific function of $n$. Suppose $\ttw' \in \mathcal W$ has a relative position function satisfying (\ref{eq-r}) for some $k,m\geqslant 1$ and $l\geqslant 0$. Since $p_{\tta,\ttw'}(n) \geqslant n$,
\begin{align*}
p_{\ttb,\ttw'}(n) &= m \cdot p_{\tta,\ttw'}(n) + (k+l+1-m)  n + m-l-1 \\
&\geqslant p_{\tta,\ttw'}(n) + (m-1)n + (k+l+1-m)n + m-l-1 \\
& = p_{\tta,\ttw'}(n) + kn + ln + m -l -1 \\
& = p_{\tta,\ttw'}(n) + kn + l(n-1) + (m-1) > p_{\tta,\ttw'}(n)\,.
\end{align*}
This implies that $p_{\tta,\ttw'}(1) = 0$. Then, we reconstruct in the straightforward manner, where the $(n+1)$-th $\tta$ is placed in the first unoccupied spot and the $(n+1)$-th $\ttb$ is placed further along according to the formula. Thus, there is only one word that is constructed from these formulas, which must be the fixed point of $\sigma_{k,l,m}$.
\end{proof}

The following is immediate.

\begin{corollary} Let $k, m \geqslant 1$, $l \geqslant 0$, and $\ttw$ be the one-sided fixed point of $\sigma_{k,l,m}$. Then,
\begin{equation*}
 r(n) = \left(\frac{m-1}{m}\right) p_{\ttb}(n) +\left(\frac{k+l+1-m}{m}\right) n +\frac{ m-l-1}{m} \,.\qed
\end{equation*}
\end{corollary}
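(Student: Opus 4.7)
The corollary is a direct algebraic consequence of Theorem~\ref{thm:6}, so no new substitution-theoretic input is needed. The plan is to use the identity $r(n)=p_\ttb(n)-p_\tta(n)$ together with the formula for $p_\ttb(n)$ already established in Theorem~\ref{thm:6}, and simply solve for $p_\tta(n)$ in terms of $p_\ttb(n)$.

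First, I would recall from Theorem~\ref{thm:6} that, for the fixed point $\ttw$ of $\sigma_{k,l,m}$,
\[
p_\ttb(n)=m\cdot p_\tta(n)+(k+l+1-m)n+(m-l-1).
\]
Since $m\geqslant 1$, this can be solved for $p_\tta(n)$ to yield
\[
p_\tta(n)=\frac{1}{m}\bigl(p_\ttb(n)-(k+l+1-m)n-(m-l-1)\bigr).
\]

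Next, I would substitute this expression directly into $r(n)=p_\ttb(n)-p_\tta(n)$ and collect terms:
\[
r(n)=p_\ttb(n)-\frac{1}{m}p_\ttb(n)+\frac{k+l+1-m}{m}\,n+\frac{m-l-1}{m},
\]
which simplifies to the claimed identity
\[
r(n)=\left(\frac{m-1}{m}\right)p_\ttb(n)+\left(\frac{k+l+1-m}{m}\right)n+\frac{m-l-1}{m}.
\]

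There is no real obstacle here; the entire content of the corollary is the substitution step, and well-definedness is guaranteed because $m\geqslant 1$ allows division. One sanity check worth mentioning is the Fibonacci case $(k,l,m)=(1,0,1)$, where the formula reduces to $r(n)=n$, agreeing with Theorem~\ref{thm-fib}(c).
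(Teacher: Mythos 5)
Your proof is correct and is exactly the argument the paper intends: the paper labels this corollary as immediate from Theorem~\ref{thm:6}, and the intended route is precisely to solve the theorem's identity $p_\ttb(n)=m\,p_\tta(n)+(k+l+1-m)n+(m-l-1)$ for $p_\tta(n)$ and substitute into $r(n)=p_\ttb(n)-p_\tta(n)$. The algebra checks out, and the Fibonacci sanity check $(k,l,m)=(1,0,1)$ giving $r(n)=n$ is a nice touch.
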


\medskip

Now we will illustrate this result with a few examples.

\begin{example}
Recall from above, the family of noble means substitutions are given by $\sigma_{k,0,1}$. The members of this family behave like the Fibonacci substitution. In particular, for each fixed point, we have $r(n) = kn$. \exend
\end{example}

\begin{example}
The period-doubling substitution $\varrho_{\rm pd}$ is given by
\[
\varrho_{\rm pd}:=\sigma_{1,0,2}:
\begin{cases}
    \tta \rightarrow \tta \ttb \\
    \ttb \rightarrow \tta \tta\, ,
\end{cases}
\]
which has the unique one-sided fixed word
\[
\ttw = \lim_{n\to\infty}\varrho_{\rm pd}^n(\tta)=\lim_{n\to\infty} \sigma_{1,0,2}^n(\tta) = \tta\ttb\tta\tta\tta\ttb\tta\ttb\tta\ttb\tta\tta\cdots.
\]
By Theorem \ref{thm:6}, we have $p_{\ttb}(n) = 2p_{\tta}(n) + 1$, and
\[
r(n) = p_{\tta}(n)+1 = \frac{1}{2}\cdot p_{\ttb}(n) + \frac{1}{2}.
\]
Analogous to the noble means family, the period-doubling substitution and its fixed point is part of a well-behaved family, specifically,
\[
\sigma_{k,0,k+1}: \begin{cases}
\tta \to \tta^{k}\ttb\\
\ttb \to \tta^{k+1}\, .
\end{cases}
\] where $p_{\ttb}(n) = (k+1)p_{\tta}(n) + 1$ and $r(n) = k\cdot p_{\tta}(n)+1 = \left(\frac{k}{k+1}\right)p_{\ttb}(n) + \frac{k}{k+1}.$\exend
\end{example}

\begin{example}
The ``mixture/addition" of Fibonacci and period-doubling is most interesting. Here, let $\ttw$ be the one-sided fixed point of
\[
\sigma_{2, 0, 2}
:\begin{cases}
\tta\to \tta \tta \ttb \\
\ttb\to\tta \tta.
\end{cases}
\]
Then, by Theorem~\ref{thm:6}, $p_\ttb(n) = 2\cdot p_\tta(n) + n + 1$ and $r(n) = p_{\tta}(n)+ n+ 1$. Further, since each level-$1$ supertile contains exactly two $\tta$'s, which are consecutive, we can relabel this substitution using $(\alpha,\beta):=(\tta\tta,\ttb)$. The relabeled fixed word is the fixed word $\ttw'$ of the substitution $\sigma'$ satisfying
\begin{align*}
\sigma'(\alpha)&= \sigma_{2,0,2}(\tta\tta) = \tta\tta\ttb\tta\tta\ttb =  \alpha \beta \alpha \beta \\
\sigma'(\beta)&= \sigma_{2,0,2}(\ttb) = \tta\tta = \alpha\,.
\end{align*}
Moreover, the relation $p_{\ttb, \ttw}(n)=2\cdot p_{\tta, \ttw}(n) + n+1$ implies that
\[
p_{\beta,\ttw'}(n)=p_{\tta, \ttw}(n)+n+1 \,,
\]
so that
\[
r_{\ttw}(n)=p_{\ttb, \ttw}(n)-p_{\tta,\ttw}(n)=p_{\beta, \ttw'} (n)
\]
which is an interesting relationship between these infinite words.\exend
\end{example}

To finish this section, we obtain the letter frequencies and the mean values of the the (relative) position functions for the entire extended Pisa family---which come as a generalization of the Fibonacci example. First, note that the Perron--Frobenius eigenvalue of $M_{\sigma_{k,l,m}}$, as mentioned above, is $\tau_{k+l,m}$ where
\[
\tau_{j,m}:=\frac{j + \sqrt{(j)^2+4m}}{2}
\]
and the right Perron--Frobenius eigenvector is $[\tau_{k+l,m} \
1]^T$. The characteristic equation of $M_{\sigma_{k,l,m}}$ is $X^2-(k+l)X-m=0$, so
\[
\tau_{k+l,m}^2=(k+l)\tau_{k+l,m}+m \,.
\]
In particular,
\[
\frac{1}{\tau_{k+l,m}}= \frac{\tau_{k+l,m}-(k+l)}{m} \,.
\]
A direct application of Lemma~\ref{sub:asympt} and \eqref{eq-r} gives
\[
  {\rm Freq}(\tta) =\frac{\tau_{k+l,m}}{\tau_{k+l,m}+1}=\frac{\tau_{j,m}-m}{k+l+1-m}, \qquad
  {\rm Freq}(\ttb) =\frac{1}{\tau_{k+l,m}+1}=\frac{k+l+1-\tau_{k+l,m}}{k+l+1-m} \,,
\]
\[
\lim_{n\to\infty} \frac{p_\tta(n)}{n}=1+\frac{\tau_{k+l,m}-(k+l)}{m}, \qquad
  \lim_{n\to\infty} \frac{p_\ttb(n)}{n}=\tau_{k+l,m}\,,
  \] and
  \[
  \lim_{n\to\infty} \frac{r(n)}{n}=\frac{(m-1)\tau_{k+l,m}+k+l}{m}= (m-1)\cdot \lim_{n\to\infty} \frac{p_\tta(n)}{n} + (k+l+1-m) \,.
\]

\medskip

Next, analogous to the proofs of Theorem~\ref{thm-2} and Lemma~\ref{lem2a1}, we establish the following.

\begin{proposition} Let $j$ and $m$ be two positive integers and let $\varrho$ be a primitive binary substitution with substitution matrix $M_\varrho$. Let $\ttw$ be a fixed word of the substitution. Then, the following are equivalent.
\begin{itemize}
  \item[(i)]  ${\rm Freq}(\tta) =\frac{\tau_{j,m}-m}{j+1-m} $.
  \vspace{.2cm}
  \item[(ii)] ${\rm Freq}(\ttb) =\frac{j+1-\tau_{j,m}}{j+1-m}$.
  \vspace{.2cm}
  \item[(iii)]$\displaystyle{\lim_{n\to\infty} \frac{p_\tta(n)}{n}= 1+\frac{\tau_{j,m}-j}{m}}$.
  \vspace{.2cm}
  \item[(iv)] $\displaystyle{\lim_{n\to\infty} \frac{p_\ttb(n)}{n}=\tau_{j,m}+1}$.
  \vspace{.2cm}
  \item[(v)]  $\displaystyle{\lim_{n\to\infty} \frac{r(n)}{n}=\frac{(m-1)\tau_{j,m}+j}{m}} $.
  \vspace{.2cm}
  \item[(vi)] $\displaystyle{\lim_{n\to\infty} \frac{r(n)}{n}= (m-1) \lim_{n\to\infty} \frac{p_\tta(n)}{n} + (j+1-m)}$.
  \vspace{.2cm}
  \item[(vii)] $[\tau_{j,m} \ 1]^T$ is a right eigenvector for $M_\varrho$.
  \end{itemize}
\end{proposition}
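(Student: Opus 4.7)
The plan is to reduce everything to the defining quadratic
\[
\tau_{j,m}^2 = j\,\tau_{j,m}+m,
\]
and to a single parameter, the Perron--Frobenius eigenvalue determined by the right Perron--Frobenius eigenvector $[u\ 1]^T$ of $M_\varrho$. By Lemma~\ref{sub:asympt}, since $\varrho$ is primitive, all of the limits in (i)--(vi) exist and take the values
\[
{\rm Freq}(\tta)=\tfrac{u}{u+1},\quad {\rm Freq}(\ttb)=\tfrac{1}{u+1},\quad \lim_{n\to\infty}\tfrac{p_\tta(n)}{n}=1+\tfrac1u,\quad \lim_{n\to\infty}\tfrac{p_\ttb(n)}{n}=u+1,\quad \lim_{n\to\infty}\tfrac{r(n)}{n}=u-\tfrac1u,
\]
so every one of (i)--(vi) is in fact a single equation in the unknown $u>0$.

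First I would show (vii)$\Rightarrow$(i)--(vi). Substituting $u=\tau_{j,m}$, each identity reduces to an algebraic manipulation controlled by the defining quadratic. For example, (iii) amounts to $1/\tau_{j,m}=(\tau_{j,m}-j)/m$, which is just $\tau_{j,m}^2=j\tau_{j,m}+m$; (iv) is immediate; for (i), cross-multiplying $\tfrac{\tau_{j,m}}{\tau_{j,m}+1}=\tfrac{\tau_{j,m}-m}{j+1-m}$ and expanding using the quadratic yields a tautology; (ii) follows by complementarity, and (v) is obtained from $\tau_{j,m}-1/\tau_{j,m}=(\tau_{j,m}^2-1)/\tau_{j,m}$ together with the quadratic. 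Finally, (vi) follows from combining (iii) and (v), since
\[
\frac{(m-1)\tau_{j,m}+j}{m}-(m-1)\left(1+\frac{\tau_{j,m}-j}{m}\right)=j+1-m.
\]

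For the converse, I would show that each of (i)--(vi), when rewritten in the variable $u$, forces $u=\tau_{j,m}$, which in turn is (vii) by Lemma~\ref{sub:asympt}. The cleanest case is (vi): substituting the expressions above yields
\[
u-\frac{1}{u}=(m-1)\Bigl(1+\frac{1}{u}\Bigr)+(j+1-m),
\]
which rearranges to $u^2-ju-m=0$, whose unique positive root is $\tau_{j,m}$. Equation (iv) immediately gives $u=\tau_{j,m}$. For (iii) one gets $1/u=(\tau_{j,m}-j)/m$, and the quadratic identity forces $u=\tau_{j,m}$. For (v), one obtains $u^2-\frac{(m-1)\tau_{j,m}+j}{m}u-1=0$, whose two roots multiply to $-1$ and whose positive root must therefore coincide with $\tau_{j,m}$ by direct verification from the quadratic. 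Conditions (i) and (ii) yield $\tfrac{u}{u+1}=\tfrac{\tau_{j,m}}{\tau_{j,m}+1}$ after using $\tau_{j,m}^2=j\tau_{j,m}+m$; since the map $u\mapsto u/(u+1)$ is injective on $u>0$, we conclude $u=\tau_{j,m}$.

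The main obstacle is cosmetic rather than structural: the identities in (i), (ii), (v) are disguised by the algebraic normalization used in the statement, and one must repeatedly apply $\tau_{j,m}^2=j\tau_{j,m}+m$ to recognize that each condition is really the single equation $u^2=ju+m$ in disguise. With the Lemma~\ref{sub:asympt} reduction in place, the argument is a routine check of six algebraic identities, parallel to the proof of Lemma~\ref{lem2a1} for the Fibonacci case $(j,m)=(1,1)$.
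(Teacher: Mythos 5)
Your proposal is correct and follows essentially the same route as the paper: reduce every condition via Lemma~\ref{sub:asympt} to an equation in the Perron--Frobenius parameter $u$, and check that each one forces $u=\tau_{j,m}$ (with the same key observations, e.g.\ that the quadratic arising from (v) has roots with product $-1$ and that (vi) rearranges to $u^2-ju-m=0$). No substantive differences from the paper's argument.
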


\begin{proof}
Let $[u \ 1]^T$ be the right Perron--Frobenius eigenvector for $M_\varrho$. Then, $u>0$. The discussion above shows that $u= \tau_{j,m}$ is the unique solution to each of the linear equations
\[
\frac{u}{u+1} =\frac{\tau_{j,m}}{\tau_{j,m}+1}=\frac{\tau_{j,m}-m}{j+1-m},\qquad
  \frac{1}{u+1}=\frac{1}{\tau_{j,m}+1}=\frac{j+1-\tau_{j,m}}{j+1-m},
  \]
  \[
  1+\frac{1}{u}=1+\frac{\tau_{j,m}-j}{m},\quad\mbox{and}\quad
  u+1=\tau_{j,m}+1.
\]
Applying Lemma~\ref{sub:asympt} proves the equivalence of (i), (ii), (iii), (iv) and (vii).

Next, the quadratic equation
\[
\frac{u^2-1}{u}= \frac{(m-1)\tau_{j,m}+j}{m} \\
\]
has $u= \tau_{j,m}$ as one of the solutions. Since the product of the solutions is $-1$, it follows that $u= \tau_{j,m}$ is the only positive solution. So, Lemma~\ref{sub:asympt} gives the equivalence of (v) and (vii).

Also by Lemma~\ref{sub:asympt}, we have
\[
\lim_{n\to\infty} \frac{r(n)}{n}= (m-1) \lim_{n\to\infty} \frac{p_\tta(n)}{n} + (j+1-m),
\] if, and only if
\[
\frac{u^2-1}{u}=(m-1)(1+\frac{1}{u})+ (j+1-m),
\] if, and only if,
\[
\frac{u^2-1}{u}=\frac{m-1}{u}+j
\] if, and only if, $u^2-ju-m=0$, which, again, has the unique positive solution $u= \tau_{j,m}$. This finishes the proof.
\end{proof}

Finally, we can prove the following result.

\begin{theorem}\label{thm:Mform} Let $j$ and $m$ be two positive integers and let $M \in M_2(\ZZ)$. Then,
\begin{itemize}
  \item[(a)] if $j^2+4m$ is not a perfect square, then $[\tau_{j,m} \
1]^T$ is a right eigenvector for $M$ if, and only if, there exist integers $s$ and $t$ such that
\[
M= \begin{bmatrix}
t+sj  &ms  \\
s&  t
\end{bmatrix},
\]
  \item[(b)] if $j^2+4m=r^2$ for some $r \in \ZZ$, then $[\tau_{j,m} \
1]^T$ is a right eigenvector for $M$ if, and only if, there exist integers $s$, $t$, $u$, and $v$ such that $u(j+r)^2+2(v-s)(j+r)-4t=0$ and
\[
  M =\begin{bmatrix} s & t \\  
  u & v \end{bmatrix}.
\]
\end{itemize}
\end{theorem}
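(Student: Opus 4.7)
The plan is to reduce both parts to analyzing the eigenvector equation
\[
M \begin{bmatrix} \tau \\ 1 \end{bmatrix} = \lambda \begin{bmatrix} \tau \\ 1 \end{bmatrix}
\]
where $\tau = \tau_{j,m}$, using the minimal relation $\tau^2 = j\tau + m$ to linearize the quadratic term. Once linearized, the two cases diverge based on whether $\tau$ is irrational or rational.

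\textbf{Part (a).} For the forward direction, write $M = \begin{bmatrix} a & b \\ c & d \end{bmatrix}$ with integer entries. The eigenvector equation yields $\lambda = c\tau + d$ from the second row, and substituting into the first row gives $a\tau + b = c\tau^2 + d\tau$. Replacing $\tau^2$ by $j\tau + m$ turns this into $(a - cj - d)\tau + (b - cm) = 0$. Since $j^2 + 4m$ is not a perfect square, $\tau$ is irrational over $\QQ$, so both coefficients must vanish: $a = cj + d$ and $b = cm$. Setting $s := c$ and $t := d$ produces the claimed form. The converse is a direct computation, exactly parallel to Lemma~\ref{lem2}:
\[
M \begin{bmatrix} \tau \\ 1 \end{bmatrix} = \begin{bmatrix} (t+sj)\tau + ms \\ s\tau + t \end{bmatrix} = \begin{bmatrix} t\tau + s(j\tau + m) \\ s\tau + t \end{bmatrix} = (s\tau + t)\begin{bmatrix} \tau \\ 1 \end{bmatrix}.
\]

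\textbf{Part (b).} When $j^2 + 4m = r^2$, observe first that $r$ and $j$ must share the same parity (since $r^2 - j^2 = 4m$ is divisible by $4$), so $\tau = (j+r)/2$ is an integer. Writing $M = \begin{bmatrix} s & t \\ u & v \end{bmatrix}$ and proceeding exactly as in (a), we still derive
\[
(s - v - uj)\tau + (t - um) = 0.
\]
Now $\tau$ is rational, so we cannot split coefficients. Instead, substitute $\tau = (j+r)/2$ and clear denominators to get $(s - v - uj)(j+r) = 2(um - t)$. Next, use $2m = (r-j)(r+j)/2$ coming from $4m = r^2 - j^2$ to rewrite $2um$ in terms of $(j+r)$; after combining like terms, one finds $uj + u(r-j)/2 = u(j+r)/2$, which lets $(j+r)$ factor out cleanly, giving
\[
(j+r)\bigl[(s-v) - u(j+r)/2\bigr] = -2t,
\]
equivalently $u(j+r)^2 + 2(v-s)(j+r) - 4t = 0$. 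The converse is then obtained by reversing these algebraic manipulations and verifying directly that $M[\tau\ 1]^T = \lambda[\tau\ 1]^T$ with $\lambda = u\tau + v$.

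The main obstacle is the algebra in part (b): unlike (a), the irrationality trick is unavailable, so one is left with a single scalar constraint that must be massaged into the stated Diophantine form. The critical step is substituting $4m = r^2 - j^2$ at just the right moment so that a common factor of $(j+r)$ emerges; without this, one is stuck with a mixed expression. No new ideas are needed beyond careful bookkeeping, but sign conventions and the parity observation on $r$ and $j$ deserve attention so the integer parameterization is internally consistent.
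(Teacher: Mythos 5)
Your proof is correct; the converse directions match the paper's computations, but your forward direction of (a) takes a genuinely more elementary route. The paper observes that $\QQ(\tau_{j,m})$ is a quadratic field with $[\tau_{j,m}'\ 1]^T$ a second, independent eigenvector, asserts that the eigenvalues have the form $t+s\tau_{j,m}$ and $t+s\tau_{j,m}'$ for integers $s,t$, and then recovers the shape of $M$ by explicitly multiplying out the diagonalization $M=P\,\mathrm{diag}(t+s\tau_{j,m},\,t+s\tau_{j,m}')\,P^{-1}$. You instead read $\lambda=c\tau+d$ directly off the second row of the eigenvector equation, substitute into the first row, reduce via $\tau^2=j\tau+m$, and use the $\QQ$-linear independence of $\{1,\tau\}$ to split the single relation $(a-cj-d)\tau+(b-cm)=0$ into the two integer equations $a=cj+d$ and $b=cm$. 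This is shorter and also makes explicit why the eigenvalue lies in $\ZZ+\ZZ\tau_{j,m}$, a point the paper's diagonalization argument takes for granted. For part (b) the paper merely states that ``a short straightforward calculation'' yields the condition $u(j+r)^2+2(v-s)(j+r)-4t=0$, whereas you carry the calculation out: the parity observation that $j+r$ is even (so $\tau=(j+r)/2\in\ZZ$), the reduction to $(s-uj-v)(j+r)+2t-2um=0$, and the substitution $4m=r^2-j^2$ so that $2uj(j+r)+u(r-j)(j+r)=u(j+r)^2$ and the common factor of $(j+r)$ emerges; multiplying through by $-2$ gives exactly the stated relation. Your write-up therefore supplies the detail the paper omits, and the algebra checks.
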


\begin{proof}
\textbf{(a)} ($\Leftarrow$). A short computation yields
\begin{align*}
\begin{bmatrix}
t+sj  &ms  \\
s&  t
\end{bmatrix}
\begin{bmatrix}
\tau_{j,m} \\ 1
\end{bmatrix}&=\begin{bmatrix}
(t+sj)\tau_{j,m} +sm   \\s\tau_{j,m}+t
\end{bmatrix}=\begin{bmatrix}
(s(j\tau_{j,m}+m )+t\tau_{j,m})   \\
s\tau_{j,m}+t
\end{bmatrix}\\
&=\begin{bmatrix}
 (s\tau_{j,m}^2+t\tau_{j,m})   \\s\tau_{j,m}+t
\end{bmatrix}=\begin{bmatrix}
\tau_{j,m}(s\tau_{j,m}+t) \\ s\tau_{j,m}+t
\end{bmatrix}=( s\tau_{j,m}+t) \begin{bmatrix}
\tau_{j,m} \\ 1
\end{bmatrix},
\end{align*}
 and so, $[\tau_{j,m} \ 1]^T$ is a right eigenvector for $M$.

($\Rightarrow$). Since $\tau_{j,m} \notin \QQ$, $\QQ(\tau_{j,m})$ is a degree two extension of $\QQ$ and both (distinct) eigenvalues lie in this field and $[\tau_{j,m}' \ 1]^T$ is another (linearly independent eigenvector of $M$. Thus, for some integers $s$ and $t$, we have
\begin{align*}
M&= \begin{bmatrix}
    \tau_{j,m} & \tau_{j,m}' \\
    1 & 1
\end{bmatrix} \begin{bmatrix}
    t+s\tau_{j,m} & 0 \\
    0 & t+s \tau_{j,m}'
\end{bmatrix}\begin{bmatrix}
    \tau_{j,m} & \tau_{j,m}' \\
    1 & 1
\end{bmatrix}^{-1} \\
&=\frac{1}{(\tau_{j,m}-\tau_{j,m}')}\begin{bmatrix}
    (t+s\tau_{j,m} )\tau_{j,m} & (t+s \tau_{j,m}')\tau_{j,m}' \\
t+s\tau_{j,m}  & t+s \tau_{j,m}'
\end{bmatrix}  \begin{bmatrix}
    1 & -\tau_{j,m}' \\
    -1 & \tau_{j,m}
\end{bmatrix} \\
&=  \frac{1}{\tau_{j,m}-\tau_{j,m}'}
\begin{bmatrix}
    (t+s\tau_{j,m} )\tau_{j,m} -(t+s \tau_{j,m}')\tau_{j,m}'  & m(t+s\tau_{j,m}) - m(t+s \tau_{j,m}') \\
\left(( t+s\tau_{j,m})-(t+s \tau_{j,m}')\right)&  -(t+s\tau_{j,m}) \tau_{j,m}'   +(t+s \tau_{j,m}') \tau_{j,m}
\end{bmatrix} \\
&=\frac{1}{\tau_{j,m}-\tau_{j,m}'}
\begin{bmatrix}
( t+sj) (\tau_{j,m}  -\tau_{j,m}')  &ms (\tau_{j,m} - \tau_{j,m}') \\
s(\tau_{j,m}- \tau_{j,m}')&  t(\tau_{j,m}- \tau_{j,m}')
\end{bmatrix}=
\begin{bmatrix}
t+sj  &ms  \\
s&  t
\end{bmatrix}\,,
\end{align*}
which finishes the proof of (a).

\textbf{(b)}
We have $\tau_{j,m} ={(j + r)}/{2}$. A short straightforward calculation shows that $[(j+r)/2, \ 1]^T$ is a right eigenvector for
\[
M= \begin{bmatrix}
s & t \\
u & v
\end{bmatrix}\] if, and only if, $u(j+r)^2+2(v-s)(j+r)-4t=0$, which is the desired result.
\end{proof}

\section{Substitutions with exactly, and asymptotically, linear relative position}\label{sec:linear}

In this section, we first consider words $\ttw$ for which $r(n)$ is a linear function, then consider the asymptotically linear case. We start with the following result, which follows immediately from results of the previous sections.

\begin{theorem} The following hold.
\begin{itemize}
    \item[(a)] For all $j \geqslant 1$ the periodic word $\ttw =(\tta^j\ttb^j)^\omega$ satisfies $r(n)= j$.
\item[(b)] Let $k$ and $j$ be integers satisfying $0 \leqslant j \leqslant k-1$, and let $\ttw$ be the one-sided fixed point of the Pisa substitution $\sigma_{k-j, j, 1}$. Then $r(n)=k  n - j$.
\item[(c)] Let $k\geqslant 0$ be an integer and $\ttw$ be the one-sided fixed point of the Pisa substitution $\sigma_{1, k-1, 1}$. Then, $
r_{D(\ttw)}(n)=k  n +1$.\qed
\end{itemize}
\end{theorem}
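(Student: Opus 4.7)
The three parts are structurally uniform: each prescribes a word and asks for a closed-form relative position function. In each case the plan is to reduce to results already proved earlier in the paper rather than reconstruct the word from scratch.

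For part (a), I would compute $p_\tta$ and $p_\ttb$ directly. Writing $n - 1 = qj + s$ with $0 \leqslant s < j$, the $n$-th $\tta$ in the $2j$-periodic word $(\tta^j \ttb^j)^\omega$ sits in the $s$-th slot of the $(q+1)$-th block $\tta^j$, so $p_\tta(n) = 2qj + s$, and by the same count the $n$-th $\ttb$ sits $j$ positions further to the right, giving $p_\ttb(n) = 2qj + j + s$. Subtracting yields $r(n) = j$. (Alternatively, one may invoke Lemma~\ref{lem:per-seq} to get the periodicity of $\Delta p_\tta$ and $\Delta p_\ttb$ and then combine with the initial offset $p_\ttb(1) - p_\tta(1) = j$.)

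For part (b), the substitution $\sigma_{k-j, j, 1}$ fits the hypothesis $k' \geqslant 1$, $l \geqslant 0$, $m \geqslant 1$ of Theorem~\ref{thm:6} (since $0 \leqslant j \leqslant k-1$ forces $k - j \geqslant 1$). Plugging $(k', l, m) = (k-j, j, 1)$ into formula \eqref{eq-r}
\[
r(n) = (m-1)\,p_\tta(n) + (k' + l + 1 - m)\,n + m - l - 1
\]
collapses the first term (because $m - 1 = 0$), and the remaining two contributions simplify to $k' + l = k$ and $-l = -j$, giving $r(n) = kn - j$ exactly.

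For part (c), the same substitution template with parameters $(1, k-1, 1)$ (well-defined for $k \geqslant 1$) gives by Theorem~\ref{thm:6} that the fixed point $\ttw$ satisfies $r_\ttw(n) = k(n-1) + 1$. Now I would appeal to Proposition~\ref{p1}: since $\sigma_{1,k-1,1}(\tta) = \tta \ttb \tta^{k-1}$, the fixed point $\ttw$ lies in $\tta\ttb\W$, hence $r_{D(\ttw)}(n) = r_\ttw(n+1) = kn + 1$. The only mild subtlety is the boundary $k = 0$ advertised in the statement: the formula $r_{D(\ttw)}(n) = 1$ would force $D(\ttw) = (\tta\ttb)^\omega$, which in turn forces $\ttw = (\tta\ttb)^\omega$ (consistent with part (a) at $j = 1$), so that case can be handled separately or tacitly absorbed. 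Since every computational ingredient (closed-form for $r_\ttw$, prefix/deletion identity) is already established, no genuine obstacle remains; the work is solely bookkeeping of the indices $(k, l, m)$ in Theorem~\ref{thm:6}.
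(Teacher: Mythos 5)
Your proposal is correct and follows exactly the route the paper intends: the theorem is stated with only the remark that it ``follows immediately from results of the previous sections,'' and your derivation---direct position computation for (a), specialization of Theorem~\ref{thm:6} with $(m-1)=0$ for (b), and Theorem~\ref{thm:6} combined with Proposition~\ref{p1} for (c)---is precisely that derivation, with the index bookkeeping done correctly. Your observation that the case $k=0$ in (c) is degenerate (since $\sigma_{1,-1,1}$ is undefined) is a fair catch of a slip in the statement, not a gap in your argument.
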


The above result covers all the arithmetic progressions $kn+j$ for $-1\leqslant j \leqslant k-1$. Note that $kn+j$ cannot occur for $j=-k$, since this arithmetic progression contains the value $0$. In this case, the best we can hope is for $r(n)$ to eventually equal $kn+j$. This can always be achieved.

\begin{proposition}
Let $k\geqslant 0$ and $j$ be integers. Let $j=qk-r$ with $0 \leqslant r \leqslant k-1$ and $\ttw$ be one-sided fixed point of the Pisa substitution $\sigma_{1, k-1, 1}$.
\begin{itemize}
    \item[(a)]  If $q\geqslant 0$, then $r_{D^q(\ttw)}(n)= kn+j$ for all large enough $n$.
      \item[(b)]  If $q< 0$, then for each balanced word $s$ of length $-2q$, we have $r_{{\rm Pre}_{s}(\ttw)}(n)= kn+j$ for all large enough $n$. In particular, this holds when $s = (\tta \ttb)^{-q}$.\qed
\end{itemize}
\end{proposition}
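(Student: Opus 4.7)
The plan is to invoke part (b) of the preceding theorem, which for each $r \in \{0, 1, \ldots, k-1\}$ gives a concrete Pisa substitution whose fixed point $\ttw$ satisfies the \emph{exact} identity $r_{\ttw}(n) = kn - r$ for every $n \geqslant 1$. Given an arbitrary integer $j$, write $j = qk - r$ with $0 \leqslant r \leqslant k-1$ via the division algorithm, and take $\ttw$ to be the fixed point of the Pisa substitution $\sigma_{k-r, r, 1}$ supplied by that part. Then the key algebraic identity
\[
kn + j \;=\; kn + qk - r \;=\; k(n+q) - r \;=\; r_{\ttw}(n+q)
\]
reduces both cases to a shift of the argument of $r_{\ttw}$ by $q$ in the appropriate direction.

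For (a), assume $q \geqslant 0$. Corollary~\ref{cor2} gives $r_{D^q(\ttw)}(n) = r_{\ttw}(n+q)$ for all sufficiently large $n$, and substituting the closed form yields $r_{D^q(\ttw)}(n) = k(n+q) - r = kn + j$, as claimed.

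For (b), assume $q < 0$, so that $-q \geqslant 1$. Let $s$ be any balanced word of length $-2q$; in particular $s = (\tta \ttb)^{-q}$ qualifies. The balanced-prefix lemma proved earlier in the preliminaries gives $r_{\mathrm{Pre}_{s}(\ttw)}(n + (-q)) = r_{\ttw}(n)$ for every $n \geqslant 1$. Reindexing with $m := n - q$ (equivalently $n = m + q$) converts this into $r_{\mathrm{Pre}_{s}(\ttw)}(m) = r_{\ttw}(m+q) = k(m+q) - r = km + j$ for every $m > -q$, which is exactly the eventual equality being asserted.

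The only genuine obstacle here is bookkeeping. Because the residue class of $r_{\ttw}(n)$ modulo $k$ is determined as soon as $\ttw$ is fixed, the base substitution must be chosen so that $-r \equiv j \pmod{k}$; otherwise no amount of shifting by $D^q$ or $\mathrm{Pre}_{s}$ can bring the linear function into the required progression. The division algorithm $j = qk - r$ performs exactly this selection, after which deleting $q$ balanced $\tta$/$\ttb$ pairs (if $q \geqslant 0$) or prepending $-q$ of them (if $q < 0$) translates the index of $r_{\ttw}$ by $q$ and turns the constant $-r$ into $j$ with no further work.
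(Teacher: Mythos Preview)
Your argument is correct and follows exactly the route the paper intends---the paper itself offers no proof beyond the \qed, since the result is meant to be immediate from part~(b) of the preceding theorem combined with Corollary~\ref{cor2} and the balanced-prefix shift lemma, which is precisely what you invoke.

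One remark worth recording: you have (rightly) taken $\ttw$ to be the fixed point of $\sigma_{k-r,\,r,\,1}$ rather than of $\sigma_{1,\,k-1,\,1}$ as the printed statement says. The latter choice has $r_\ttw(n)=kn-(k-1)$ identically, so applying $D^q$ or ${\rm Pre}_s$ can only reach constants $j$ with $j\equiv 1\pmod k$; the statement as printed therefore cannot hold for general $j$, and your substitution $\sigma_{k-r,\,r,\,1}$ is the evident correction. With that fix in place your bookkeeping is exactly right.
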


We now turn to binary substitutions for which the relative position is asymptotically linear. In particular, analogous to the computations in Section~\ref{sect:fib}, we can find all binary substitutions whose one-sided fixed points satisfy
\[
\lim_{n\to\infty} \frac{r(n)}{n}= k \in \ZZ \,.
\]
As in the previous section, for each $k \in \NN$, denote $\tau_{k}:=\frac{k+ \sqrt{k^2+4}}{2}$. These numbers arise naturally as the eigenvalues of the substitution matrices of the Pisa substitutions $\sigma_{k-j,j,1}$. Of course, $\tau_1=\tau$ is the golden mean.
Recall that $\tau_k$ is a root of $X^2-kX-1=0$ and so, $\tau_k^2=k \tau_k+1$.

We can now prove the following, more general, version of Lemma~\ref{lem2}:

\begin{lemma}\label{lem2a} Let $k \in \NN$ and $M \in {M}_2(\ZZ)$. Then,  $[\tau_k\ 1]^T$ is a right eigenvector for $M$ if, and only if, there exist $m, n \in \ZZ$ such that
\[
M= \begin{bmatrix}
km+n & m \\
m & n
\end{bmatrix}\,.
\]
In this case, the eigenvalues are $n+m \tau_k$ and $n+m \tau'_k$, where $\tau_k'$ is the algebraic conjugate of $\tau_k$.
\end{lemma}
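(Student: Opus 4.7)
The plan is to follow the proof of Lemma~\ref{lem2} essentially verbatim, with the key identity $\tau^2 = \tau + 1$ replaced by the defining relation $\tau_k^2 = k\tau_k + 1$. A preliminary observation is that $\tau_k \notin \QQ$ for every $k \in \NN$: indeed, if $k^2 + 4 = r^2$ for some $r \in \ZZ$, then $(r-k)(r+k) = 4$ with $r > k \geqslant 1$, and the only positive integer factorization of $4$ into two factors of the same parity with $r+k > r-k$ forces $r = 2, k = 0$, which is excluded. This irrationality is exactly what allows coefficient matching.

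For sufficiency, I would simply compute
\[
\begin{bmatrix} km+n & m \\ m & n \end{bmatrix} \begin{bmatrix} \tau_k \\ 1 \end{bmatrix} = \begin{bmatrix} (km+n)\tau_k + m \\ m\tau_k + n \end{bmatrix},
\]
and use $(km+n)\tau_k + m = m(k\tau_k + 1) + n\tau_k = m\tau_k^2 + n\tau_k = \tau_k(m\tau_k + n)$, so the right-hand side equals $(m\tau_k + n)[\tau_k\ 1]^T$. This simultaneously proves that $[\tau_k\ 1]^T$ is an eigenvector and identifies one eigenvalue as $m\tau_k + n$. Since $\tau_k$ and its algebraic conjugate $\tau_k'$ satisfy $\tau_k + \tau_k' = k$, the trace $km + 2n$ then forces the second eigenvalue to be $(km+2n)-(m\tau_k+n) = m(k - \tau_k) + n = m\tau_k' + n$, matching the claim.

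For necessity, write $M = \left[\begin{matrix} a & b \\ c & d \end{matrix}\right]$ and expand $M\,[\tau_k\ 1]^T = \lambda\,[\tau_k\ 1]^T$ into the two scalar equations $a\tau_k + b = \lambda \tau_k$ and $c\tau_k + d = \lambda$. Substituting the second into the first and applying $\tau_k^2 = k\tau_k + 1$ yields
\[
a\tau_k + b = (c\tau_k + d)\tau_k = (ck + d)\tau_k + c,
\]
so $(a - ck - d)\tau_k = c - b$. Since $\tau_k \notin \QQ$, both sides must vanish, giving $b = c$ and $a = ck + d$. Setting $m := c = b$ and $n := d$ produces exactly the claimed form of $M$, completing the proof. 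The only (minor) obstacle is the irrationality check for $\tau_k$; once that is in hand, everything reduces to the Lemma~\ref{lem2} argument with the scalar $1$ in $\tau^2 = \tau + 1$ replaced by $k$.
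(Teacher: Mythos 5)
Your proof is correct. The sufficiency computation and the identification of the two eigenvalues via the trace coincide with what the paper does. For the necessity direction, however, you take a genuinely more direct route: the paper first observes (via the rational root test) that $X^2-kX-1$ is irreducible, then writes $M$ as the conjugate of the diagonal matrix ${\rm diag}(n+m\tau_k,\, n+m\tau_k')$ by the eigenvector matrix $\left[\begin{smallmatrix}\tau_k & \tau_k'\\ 1 & 1\end{smallmatrix}\right]$ and multiplies everything out (mimicking the computation in Theorem~\ref{thm:Mform}(a)), which implicitly requires knowing in advance that the eigenvalue has the form $n+m\tau_k$ with integer coefficients. You instead eliminate $\lambda$ between the two rows of the eigenvector equation and match coefficients of $1$ and $\tau_k$, exactly as in the spirit of Lemma~\ref{lem2}; this avoids the matrix inversion entirely and makes the integrality of $m$ and $n$ automatic, since they are read off as entries of $M$. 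Your irrationality check for $\tau_k$ (ruling out $k^2+4$ being a perfect square for $k\geqslant 1$) is a perfectly adequate substitute for the paper's irreducibility argument, and it is the one genuinely necessary ingredient for the coefficient matching, so you were right to flag it. Both arguments are elementary and complete; yours is arguably the cleaner of the two.
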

\begin{proof} ($\Leftarrow$). Note that
\[
\begin{bmatrix}
km+n & m \\
m & n
\end{bmatrix}\begin{bmatrix}
\tau_k  \\ 1
\end{bmatrix}=
\begin{bmatrix}
km\tau_k+n\tau_k+m \\
m \tau_k + n
\end{bmatrix}=\begin{bmatrix}
m \tau_k^2 + n\tau_k  \\ m \tau_k + n
\end{bmatrix}= \\
 (m \tau_k + n) \begin{bmatrix}
\tau_k  \\ 1
\end{bmatrix}
\] so that $[\tau_k\ 1]^T$ is a right eigenvector for $M$.

($\Rightarrow$).
Since $k>0$, the polynomial $X^2-kX-1$ is irreducible over $\QQ$ by the rational root test. In particular, $\QQ(\tau_k)$ is a degree-two extension of $\QQ$. We now mimic the proof of Theorem~\ref{thm:Mform}(a) to get
\begin{align*}
M&= \begin{bmatrix}
    \tau_k & \tau_k' \\
    1 & 1
\end{bmatrix} \begin{bmatrix}
    n+m\tau_k & 0 \\
    0 & n+m \tau_k'
\end{bmatrix}\begin{bmatrix}
    \tau_k & \tau_k' \\
    1 & 1
\end{bmatrix}^{-1} \\
&=\frac{1}{\tau_k-\tau_k'}\begin{bmatrix}
    (n+m\tau_k )\tau_k & (n+m \tau_k')\tau_k' \\
 n+m\tau_k  & n+m \tau_k'
\end{bmatrix}  \begin{bmatrix}
    1 & -\tau_k' \\
    -1 & \tau_k
\end{bmatrix} \\
&=\frac{1}{\tau_k-\tau_k'}
\begin{bmatrix}
(n+km )   (\tau_k-\tau_k') & m(\tau_k - \tau_k') \\
m(\tau_k - \tau_k')& n (\tau_k-\tau_k')
\end{bmatrix}
=\begin{bmatrix}
km+n & m \\
m & n
\end{bmatrix}\,,
\end{align*} which proves the result.
\end{proof}

We finish this section with the classification of all asymptotically linear relative position functions arising form primitive binary substitutions.

\begin{theorem}\label{thm-2} Let $\varrho$ be a primitive binary substitution with substitution matrix $M_\varrho$ and having one-sided fixed point $\ttw \in \W$. Let $k\in\NN$.
\begin{itemize}
  \item[(a)] The following are equivalent.
\begin{itemize}
\item[(i)] ${\lim_{n\to\infty} {r(n)}/{n}=k}$.
  \vspace{.2cm}
\item[(ii)]  $[\tau_k\ 1]^T$ is a right Perron--Frobenius eigenvector for $M_\varrho$.
\item[(iii)] There exist $m \in \NN$ and $n \in \ZZ_{\geqslant 0}$ such that
$
M_\varrho= \begin{bmatrix}
km+n & m \\
m & n
\end{bmatrix}$.
\end{itemize}
\item[(b)] The following are equivalent.
\begin{itemize}
\item[(i)] ${\lim_{n\to\infty} {r(n)}/{n}=-k}$.
  \vspace{.2cm}
\item[(ii)]  $[1\ \tau_k]^T$ is a right Peron--Frobenius eigenvector for $M_\varrho$.
\item[(iii)] There exist $m \in \NN$ and $n \in \ZZ_{\geqslant 0}$ such that
$
M_\varrho= \begin{bmatrix}
                         n & m \\
                         m &km+n
                       \end{bmatrix}$.
\end{itemize}
\item[(c)] The following are equivalent.
\begin{itemize}
  \item[(i)] ${\lim_{n\to\infty} {r(n)}/{n}=0}$.
  \vspace{.2cm}
  \item[(ii)] $[1\ 1]^T$ is a right Perron--Frobenius eigenvector for $M_\varrho$.
  \item[(iii)] There exist $a,b,c,d \in \ZZ_{\geqslant 0}$ with $a+b=c+d$ such that
$
M_\varrho=\begin{bmatrix}
a & b \\
c & d
\end{bmatrix}$.
\end{itemize}
\end{itemize}
\end{theorem}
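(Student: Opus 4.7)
The plan is to prove this theorem by first establishing part (a), then deriving part (b) via the reflection operator, and handling part (c) by a parallel argument to (a).

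For (a), the equivalence (i)$\Leftrightarrow$(ii) follows directly from Lemma~\ref{sub:asympt}. Writing the right Perron--Frobenius eigenvector of $M_\varrho$ as $[u\ 1]^T$ with $u>0$, that lemma gives $\lim_{n\to\infty} r(n)/n = (u^2-1)/u$. Setting this equal to $k$ yields the quadratic $u^2-ku-1=0$, whose unique positive root is $\tau_k$; conversely, substituting $u=\tau_k$ recovers the value $k$. For (ii)$\Leftrightarrow$(iii), I would invoke Lemma~\ref{lem2a} to classify every $M\in M_2(\ZZ)$ having $[\tau_k\ 1]^T$ as right eigenvector as $\begin{bmatrix}km+n & m\\ m & n\end{bmatrix}$ for some $m,n\in\ZZ$. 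Because $M_\varrho$ is a non-negative integer matrix, we must have $m,n\geqslant 0$; and because $\varrho$ is primitive (so $M_\varrho$ cannot be diagonal) we get $m\geqslant 1$. Conversely, any such matrix with $m\geqslant 1$, $n\geqslant 0$ has non-negative entries and positive off-diagonal entries (hence is primitive), with eigenvalues $n+m\tau_k$ and $n+m\tau_k'$. Since $\tau_k>|\tau_k'|$ for $k\geqslant 1$, the eigenvalue $n+m\tau_k$ dominates, so $[\tau_k\ 1]^T$ is the Perron--Frobenius eigenvector.

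Part (b) follows from (a) by an application of the reflection operator. By Proposition~\ref{prop12}(b), if $\ttw$ has relative position function satisfying $\lim_{n} r_\ttw(n)/n=-k$, then $\bar\ttw$ satisfies $\lim_{n} r_{\bar\ttw}(n)/n=k$. Now $\bar\ttw$ is a fixed point of the conjugate substitution $\tilde\varrho$, whose substitution matrix is obtained from $M_\varrho$ by simultaneously swapping the two rows and two columns, i.e.\ by conjugation with the permutation matrix $P=\begin{bmatrix}0&1\\1&0\end{bmatrix}$. Applying (a) to $\tilde\varrho$ yields $M_{\tilde\varrho}=\begin{bmatrix}km+n & m\\ m & n\end{bmatrix}$; undoing the conjugation gives $M_\varrho=PM_{\tilde\varrho}P=\begin{bmatrix}n & m\\ m & km+n\end{bmatrix}$. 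Likewise, $[\tau_k\ 1]^T$ is the PF eigenvector of $M_{\tilde\varrho}$ if and only if $P[\tau_k\ 1]^T=[1\ \tau_k]^T$ is the PF eigenvector of $M_\varrho$, giving the three equivalences in (b).

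For (c), the same argument as in (a) applies: (i)$\Leftrightarrow$(ii) comes from Lemma~\ref{sub:asympt}, where $(u^2-1)/u=0$ together with $u>0$ forces $u=1$, so the PF eigenvector is $[1\ 1]^T$. For (ii)$\Leftrightarrow$(iii), a direct computation shows $M_\varrho\begin{bmatrix}1\\1\end{bmatrix}=\begin{bmatrix}a+b\\c+d\end{bmatrix}$, so $[1\ 1]^T$ is an eigenvector precisely when $a+b=c+d$, and the corresponding eigenvalue equals the common row sum. For primitive $M_\varrho\in M_2(\ZZ_{\geqslant 0})$, the second eigenvalue $a-c=d-b$ is strictly smaller in absolute value, confirming that $[1\ 1]^T$ is the Perron--Frobenius eigenvector.

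The main technical point, and the only real obstacle, will be confirming in each case that $[\tau_k\ 1]^T$, $[1\ \tau_k]^T$, or $[1\ 1]^T$ is indeed the \emph{Perron--Frobenius} (dominant) eigenvector of $M_\varrho$ rather than merely an eigenvector. This requires comparing $|n+m\tau_k|$ with $|n+m\tau_k'|$ in (a) and (b), and $|a+b|$ with $|a-c|$ in (c); both comparisons are straightforward once $m\geqslant 1$ and primitivity are in place, but they need to be spelled out explicitly to complete the argument.
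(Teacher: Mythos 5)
Your proposal is correct and follows essentially the same route as the paper: part (a) via Lemma~\ref{sub:asympt} together with the matrix classification of Lemma~\ref{lem2a} (the paper simply cites the analogous argument of Lemma~\ref{lem2a1}), part (b) by reflection, and part (c) by the row-sum computation. Your explicit verification that the positive eigenvector is in fact the Perron--Frobenius one is a detail the paper leaves implicit (it follows at once from primitivity, since a primitive matrix has a unique positive eigendirection), but it does not change the argument.
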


\begin{proof} The proof of \textbf{(a)} is analogous to that of Lemma~\ref{lem2a1}, so we omit it.

\textbf{(b)} We have $\lim_{n\to\infty} {r_\ttw(n)}/{n}=-k$ if, and only if, $\lim_{n\to\infty} {r_{\overline{\ttw}}(n)}/{n}=k$. Now, $\overline{\ttw}$ is the one-sided fixed point of the substitution $\overline{\varrho}$. Let ${M}_{\overline{\varrho}}$ be the substitution matrix of $\overline{\varrho}$. Then,
\[
M_\varrho=\begin{bmatrix}
    a & b \\
    c & d
  \end{bmatrix}\quad\mbox{if, and only if,}\quad {M}_{\overline{\varrho}}= \begin{bmatrix}
    d & c \\
    b & a
  \end{bmatrix} \,.
\]
Part (b) now follows now from part (a) applied to the situation of $\overline{\ttw}$.

\textbf{(c)} Let $[u\ 1]$ be the left Perron--Frobenius eigenvector of $M_\varrho$.
Then, by Corollary~\ref{cor:asymp}, $\lim_{n\to\infty} \frac{r(n)}{n}=\frac{u^2-1}{u}$. The equivalence between (i) and (ii) is now clear, and (ii) $\Leftrightarrow$ (iii) is a trivial exercise.
\end{proof}

\section{The Thue--Morse substitution and related words}\label{sec:TM}

We now arrive at our final curiosity of the relative position function. Here, we show that the Thue--Morse word $\ttw_{\rm TM}$ is the only word $\ttw\in\Wa$ on the letters $\tta =1$ and $\ttb =-1$ with the property that the word
\[
r(1)r(2) \cdots r(n) \cdots
\]
is, again, equal to $\ttw$.

To this end, note that The Thue--Morse substitution $\varrho_{\rm TM}$ is defined by
\[
\varrho_{\rm TM}:
\begin{cases}
    \tta \to \tta\ttb \\
    \ttb \to \ttb\tta\, ,
\end{cases}
\]
which has one-sided fixed point $\ttw_{\rm TM}$ and satisfies ${\rm Freq}(\tta) = {\rm Freq}(\ttb) = \frac{1}{2}$ and $\lim_{n\to\infty} {r(n)}/{n} = 0$.

\begin{definition} Let $\ttw = \ell_0 \ell_1 \cdots \ell_n \cdots\in\W$. We define the \emph{dimers}
\[
X_n:=\ell_{2n}\ell_{2n+1} \,.
\]
Note, here, that $\ttw = X_0X_1 \cdots X_n \cdots$, where $X_n \in \{ \tta \tta , \tta \ttb, \ttb \tta , \ttb \ttb \}$.
\end{definition}

In the particular case wherein each dimer contains distinct letters, we can give an explicit formula for $r(n)$ in terms of the dimers $X_n$. Taking into account that the first dimer has index $0$ while $r$ starts at $r(1)$, the following result follows from an easy induction on $n$.

\begin{proposition} Let $\ttw \in \W$ be some word. If all the dimers $X_n$ satisfy $X_n \in \{ \tta \ttb, \ttb \tta \}$ then
\[
r(n)=
\begin{cases}
1 &\mbox{if } X_{n-1} = \tta \ttb \\
-1 &\mbox{if } X_{n-1} = \ttb \tta \,. \qed
\end{cases}
\]
\end{proposition}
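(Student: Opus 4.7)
My plan is to reduce everything to a single local-to-global counting observation: under the hypothesis, each dimer $X_k$ contains exactly one $\tta$ and exactly one $\ttb$. An immediate induction on $n$ then shows that the first $n$ dimers $X_0, X_1, \ldots, X_{n-1}$ together contain precisely $n$ copies of $\tta$ and $n$ copies of $\ttb$, and that the $n$-th $\tta$ and the $n$-th $\ttb$ both occur inside the final dimer $X_{n-1}$ of this block. (The inductive step is trivial: by hypothesis the first $n-1$ dimers supply exactly $n-1$ copies of each letter, and $X_{n-1}$ contributes exactly one more of each, which must therefore be the $n$-th occurrences.)

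Given this, the formula for $r(n)$ is just a matter of reading off positions. Recall from the standing convention of the paper that $\ttw = \ell_0 \ell_1 \cdots$ is zero-indexed while $r$, $p_\tta$, $p_\ttb$ are one-indexed; correspondingly, the dimer $X_{n-1}$ occupies the two consecutive positions $2n-2$ and $2n-1$. In the case $X_{n-1} = \tta\ttb$ we therefore obtain
\[
p_\tta(n) = 2n-2 \quad \text{and} \quad p_\ttb(n) = 2n-1,
\]
so $r(n) = p_\ttb(n) - p_\tta(n) = 1$. In the case $X_{n-1} = \ttb\tta$, the roles swap, yielding $p_\ttb(n) = 2n-2$ and $p_\tta(n) = 2n-1$, whence $r(n) = -1$. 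This is exactly the desired formula.

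No step is a serious obstacle; the only subtlety worth flagging is the index offset between the zero-indexed word and the one-indexed position functions, which is what produces the shift $n \mapsto n-1$ in the dimer label appearing on the right-hand side of the claim.
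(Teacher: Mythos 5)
Your proof is correct and is exactly the argument the paper intends: the paper only remarks that the result ``follows from an easy induction on $n$'' after noting the index offset between the zero-indexed dimers and the one-indexed $r$, and your write-up supplies precisely that induction (each dimer contributes one $\tta$ and one $\ttb$, so the $n$-th occurrence of each letter lies in $X_{n-1}$ at positions $2n-2$ and $2n-1$). Nothing is missing.
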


In the case of the Thue--Morse word, the dimers are exactly the level-$1$ supertiles $\ttA=\tta \ttb$ and $\ttB=\ttb \tta$ of the substitution $\varrho_{\rm TM}$. This immediately gives the following result.

\begin{theorem} The Thue--Morse word $\ttw_{\rm TM}$ is the only binary word on $\tta=1$ and $\ttb=-1$ starting with $\tta=1$, having the property that ${\ttw}=r(1)r(2)\cdots r(n)\cdots$.
\end{theorem}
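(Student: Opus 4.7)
The plan is to show that the hypothesis $\ttw=r(1)r(2)\cdots$ forces $|r(n)|=1$ for every $n$, that this in turn forces the dimer structure required by the preceding proposition, and that the resulting formula for $r$ then collapses to the standard Thue--Morse recursion. Identifying letters with $\pm 1$ via $\tta=1$, $\ttb=-1$, the hypothesis reads $\ell_n=r(n+1)$ for all $n\geqslant 0$, so in particular $r(n)\in\{1,-1\}$ for every $n$. Since $\ttw\in\Wa$ gives $r(1)>0$, we must have $r(1)=1$, so $p_\tta(1)=0$ and $p_\ttb(1)=1$; this yields $\ell_1=\ttb$ and the opening dimer $X_0=\tta\ttb$.

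The main (and only substantive) step is to prove by induction on $n$ that every dimer $X_n$ lies in $\{\tta\ttb,\ttb\tta\}$; this is where the hypothesis $|r(n)|=1$ really does the work. Suppose this is known for $X_0,\ldots,X_{n-1}$; then the first $n$ occurrences of $\tta$ and the first $n$ occurrences of $\ttb$ together exhaust positions $0,1,\ldots,2n-1$, and in particular $p_\tta(n),p_\ttb(n)\leqslant 2n-1$. Now look at $\ell_{2n}$. If $\ell_{2n}=\tta$, then $p_\tta(n+1)=2n$, while $|r(n+1)|=1$ forces $p_\ttb(n+1)\in\{2n-1,2n+1\}$; the value $2n-1$ is excluded because $p_\ttb$ is strictly increasing, and the value $2n$ is excluded because $\ell_{2n}=\tta$, leaving $p_\ttb(n+1)=2n+1$ and hence $X_n=\tta\ttb$. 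The case $\ell_{2n}=\ttb$ is symmetric and gives $X_n=\ttb\tta$.

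With the dimer structure in hand, the preceding proposition yields $r(n+1)=+1$ exactly when $X_n=\tta\ttb$, i.e., when $\ell_{2n}=\tta$. Under the identification $\tta=1,\ttb=-1$, this reads $\ell_n=r(n+1)=\ell_{2n}$ for every $n\geqslant 0$; combining with $\ell_{2n+1}=\overline{\ell_{2n}}$ (immediate from $X_n\in\{\tta\ttb,\ttb\tta\}$) gives the pair of recursions $\ell_{2n}=\ell_n$ and $\ell_{2n+1}=\overline{\ell_n}$. Together with the boundary condition $\ell_0=\tta$, these are one of the standard recursive definitions of the Thue--Morse sequence, and a straightforward induction on $n$ pins down $\ttw=\ttw_{\rm TM}$ uniquely, which is the desired conclusion.
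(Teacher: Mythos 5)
Your argument is correct and follows essentially the same route as the paper: both reduce the problem to showing that $|r(n)|=1$ for all $n$ forces every dimer $X_n$ to lie in $\{\tta\ttb,\ttb\tta\}$, and then apply the preceding proposition. In fact you supply the details of the dimer induction that the paper merely asserts (``a simple induction shows\ldots''), and your closing step---deriving $\ell_{2n}=\ell_n$ and $\ell_{2n+1}=\overline{\ell_n}$ and invoking the standard recursive characterization of the Thue--Morse sequence---is a repackaging of the paper's letter-by-letter comparison of $\ttw$ with $\ttw_{\rm TM}$. The one genuine omission is the existence half: the theorem asserts that $\ttw_{\rm TM}$ is the \emph{only} word with the property, so you must also verify that $\ttw_{\rm TM}$ actually has it, which your uniqueness argument does not do. This is immediate---the dimers of $\ttw_{\rm TM}$ are the level-$1$ supertiles $\tta\ttb$ and $\ttb\tta$, so the proposition gives $r(n+1)=1$ exactly when $\ell_n=\tta$---but it is a separate check and should be stated.
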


\begin{proof}
If $\ttw_{\rm TM}$ is the Thue--Morse word, then by the above,  $r_{\ttw_{\rm TM}}(n+1)=1$ if, and only if, $X_n=\ttA$ if, and only if, $\ell_{n}=\tta = 1$. This shows that Thue--Morse word satisfies this property.

Next, let $\ttw$ be any word on $\tta=1$ and $\ttb=-1$ starting with $\tta=1$ with this property. Since $r_{\ttw}(n) \in \{1,-1\}$ for all $n$, a simple induction shows the dimers satisfy
\[
X_n'=\begin{cases}
\tta \ttb  &\mbox{if } r_{\ttw}(n+1) = 1 \\
\ttb \tta &\mbox{if } r_{\ttw}(n+1) = -1 \,.
\end{cases}
\]
Now, let $\ttw =\ell_0'\ell_1' \cdots \ell_n' \cdots$ and let $\ttw_{\rm TM}=\ell_0\ell_1 \cdots \ell_n \cdots$. We know that $\ell_0=1=\ell'_0$. Now, for each $n \geqslant 0$, we have that $\ell_n = \ell'_n$ implies that $r_{\ttw_{\rm TM}}(n+1)=r_{\ttw}(n+1)$, which implies that $X_n=X_n'$, so that $\ell_{2n} = \ell'_{2n}$ and $\ell_{2n+1} = \ell'_{2n+1}$. A simple induction proves the claim.
\end{proof}

Let us note next that for a word $\ttw \in \W$, the equality's $r_\ttw(1)=1, r_\ttw(2)=-1$ are equivalent to $\ttw \in \tta \ttb \ttb \tta \W$. Therefore, we have
\begin{corollary} The Thue--Morse word $\ttw_{\rm TM}$ is the only binary word in $\tta \ttb \ttb \tta\W$ which is isomorphic to $r(1)r(2)\cdots r(n)\cdots$.
\end{corollary}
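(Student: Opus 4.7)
The plan is to reduce the corollary to the preceding theorem by showing that, under the hypotheses, the only possible isomorphism is the one that relabels $\tta \mapsto 1$ and $\ttb \mapsto -1$, after which ``isomorphic'' collapses to ``equal.''

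First, I would compute $r_\ttw(1)$ and $r_\ttw(2)$ for any $\ttw \in \tta\ttb\ttb\tta\W$. Directly from the prefix $\tta\ttb\ttb\tta$, we have $p_\tta(1)=0$, $p_\ttb(1)=1$, $p_\ttb(2)=2$, and $p_\tta(2)=3$, so $r_\ttw(1)=1$ and $r_\ttw(2)=-1$. In particular, the sequence $r_\ttw(1)r_\ttw(2)\cdots$ contains both of the values $1$ and $-1$.

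Next, suppose $\ttw$ is isomorphic to the word $r_\ttw(1)r_\ttw(2)\cdots$ via a bijection $\sigma$ between the reduced alphabets. Since both $\tta$ and $\ttb$ appear in $\ttw$, the reduced alphabet on the left is $\{\tta,\ttb\}$, so the reduced alphabet of the relative position sequence must also have exactly two letters. Because $\sigma$ is a bijection and must send the $n$-th letter of $\ttw$ to $r_\ttw(n)$, evaluating at $n=1$ and $n=2$ forces $\sigma(\tta)=r_\ttw(1)=1$ and $\sigma(\ttb)=r_\ttw(2)=-1$. Consequently, the only two values that occur in the relative position sequence are $\pm 1$, and, after identifying $\tta=1$ and $\ttb=-1$, the isomorphism condition becomes the equality $\ttw = r_\ttw(1)r_\ttw(2)\cdots$ as binary words on $\{\pm 1\}$.

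Finally, the word $\ttw$ so identified starts with $\tta = 1$, so the previous theorem applies and yields $\ttw = \ttw_{\rm TM}$. Conversely, the Thue--Morse word lies in $\tta\ttb\ttb\tta\W$ (indeed, $\ttw_{\rm TM}$ begins $\tta\ttb\ttb\tta\cdots$) and is equal---hence isomorphic---to its own relative position sequence by the previous theorem, establishing the corollary. I do not anticipate any real obstacle here; the only subtle point is making sure the alphabets are reduced so that the isomorphism is uniquely determined by its values on the first two letters, which is immediate from $\ttw \in \W$.
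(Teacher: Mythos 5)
Your proof is correct and follows essentially the same route as the paper, which derives the corollary from the observation that $r_\ttw(1)=1$ and $r_\ttw(2)=-1$ precisely when $\ttw \in \tta\ttb\ttb\tta\W$, so that any isomorphism is forced to be $\tta\mapsto 1$, $\ttb\mapsto -1$ and the statement reduces to the preceding theorem. You have simply spelled out the details (reduced alphabets, the forced values of $\sigma$ on the first two letters) that the paper leaves implicit.
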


In exactly the same way, we can show that the double--double of bits of the Thue--Morse word creates the only word on $\tta =+2$ and $\ttb =-2$ with this property. More generally, we have the following result. Since the proof is identical to the one above, we omit it.

\begin{theorem} Let $k>1$ be a positive integer, let $\ttw_{\rm TM}$ be the Thue--Morse word and let $\ttw = \phi_k\ttw_{\rm TM}$. Then, $\ttw$ is the only binary word on $\tta=k$ and $\ttb=-k$, starting with $\tta=k$, with the property that ${\ttw}=r(1)r(2)\cdots r(n)\cdots$.\qed
\end{theorem}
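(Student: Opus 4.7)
The plan is to mirror the preceding Thue--Morse characterization, replacing dimers by $2k$-blocks. I need to show two things: first, that $\ttw = \phi_k(\ttw_{\rm TM})$ (on the alphabet $\tta = k$, $\ttb = -k$) satisfies $\ttw = r_\ttw(1) r_\ttw(2) \cdots$, and second, that no other word in $\tta\W$ with letters in $\{k,-k\}$ does.

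Existence follows directly from the cloning formula in Lemma~\ref{lem23}: for $m \geqslant 0$ and $1 \leqslant j \leqslant k$, we have $r_{\phi_k(\ttw_{\rm TM})}(mk+j) = k \cdot r_{\ttw_{\rm TM}}(m+1)$. The preceding theorem identifies $r_{\ttw_{\rm TM}}(m+1)$ with the $m$-th bit of $\ttw_{\rm TM}$ (viewed as $\pm 1$), while the $(mk+j-1)$-st bit of $\phi_k(\ttw_{\rm TM})$ (viewed as $\pm k$) equals $k$ times the $m$-th bit of $\ttw_{\rm TM}$. Thus $r_{\phi_k(\ttw_{\rm TM})}(n) = \ell'_{n-1}$ for every $n$, which is the desired self-reference.

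For uniqueness, let $\ttw' = \ell'_0 \ell'_1 \cdots$ satisfy the hypotheses and group its bits into $2k$-blocks $Y_n := \ell'_{2nk} \ell'_{2nk+1} \cdots \ell'_{2nk+2k-1}$. The aim is to show, by induction on $n$, that each $Y_n$ lies in $\{\tta^k \ttb^k,\ \ttb^k \tta^k\}$ and agrees with the $n$-th $2k$-block of $\phi_k(\ttw_{\rm TM})$. The base case is straightforward: $\ell'_0 = \tta$ forces $r(1) = k$, giving $p_\tta(1) = 0$ and $p_\ttb(1) = k$, so positions $0,\ldots,k-1$ are all $\tta$; this in turn forces $r(2) = \cdots = r(k) = k$, which pins down positions $k,\ldots,2k-1$ as all $\ttb$, so $Y_0 = \tta^k \ttb^k$. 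In the inductive step, the first $2nk$ bits contain exactly $nk$ copies of each letter, so both $p_\tta(nk+1)$ and $p_\ttb(nk+1)$ are at least $2nk$; the value $r(nk+1) = \ell'_{nk}$ is (by the inductive hypothesis) $k$ times the $n$-th bit of $\ttw_{\rm TM}$, and the rigidity $|r| = k$ forces one of these two positions to be exactly $2nk$. The next $k-1$ values of $r$ equal $r(nk+1)$ by induction, and propagating this determines the entire block $Y_n$ as either $\tta^k \ttb^k$ or $\ttb^k \tta^k$, in agreement with $\phi_k(\ttw_{\rm TM})$.

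Once the block structure is in hand, un-cloning gives a binary word $\ttw''$ on $\{\tta=1,\ttb=-1\}$ with $\ttw' = \phi_k(\ttw'')$; the cloning formula then converts the self-reference property of $\ttw'$ into the corresponding property for $\ttw''$. The preceding Thue--Morse characterization forces $\ttw'' = \ttw_{\rm TM}$, and therefore $\ttw' = \phi_k(\ttw_{\rm TM})$. I expect the main technical obstacle to be the inductive step above, where one must rule out ``mixed'' continuations such as $\tta^{j} \ttb^{k} \tta^{k-j}$ within $Y_n$ for $1 \leqslant j \leqslant k-1$; this is accomplished by carefully tracking the positions $p_\tta(nk+j)$ and $p_\ttb(nk+j)$ for $j=1,\ldots,k$ and using that each difference $r(nk+j) = p_\ttb(nk+j)-p_\tta(nk+j)$ must be exactly $\pm k$.
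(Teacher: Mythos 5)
Your proof is correct and follows essentially the route the paper intends: the paper omits this proof, stating it is identical to the dimer argument for the Thue--Morse theorem, and your $2k$-block induction (base case $Y_0=\tta^k\ttb^k$, then using that positions $nk,\ldots,nk+k-1$ lie in already-determined blocks to force $Y_n\in\{\tta^k\ttb^k,\ttb^k\tta^k\}$) is precisely that generalization, with existence handled by Lemma~\ref{lem23} as in the paper's setup. The only difference is cosmetic: once your induction shows each $Y_n$ agrees with the corresponding block of $\phi_k(\ttw_{\rm TM})$ you are already done, so the final un-cloning step reducing to the $k=1$ theorem is a harmless redundancy.
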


\subsection*{Acknowledgements}
This work was partially supported by NSERC via grants 2019-05430 (CR) and 2024-04853 (NS), and by a David W. and Helen E. F. Lantis Endowment (MC). This work was initiated during MC's visit to MacEwan University---MC thanks the MacEwan faculty and staff for their hospitality and support.

\bibliographystyle{amsplain}
\providecommand{\bysame}{\leavevmode\hbox to3em{\hrulefill}\thinspace}
\providecommand{\href}[2]{#2}


\end{document}